\documentclass[11pt]{article}

\usepackage{amsmath,amssymb,amsthm,mathrsfs}
\usepackage{hyperref}
\usepackage{fullpage}
\usepackage{xcolor}

\usepackage{tikz}
\usetikzlibrary{arrows.meta, positioning, shapes.geometric}

\newif\ifshowcomments
\showcommentsfalse       

\newtheorem{theorem}{Theorem}

\newtheorem{lemma}{Lemma}[section]
\newtheorem{proposition}{Proposition}[section]
\newtheorem{corollary}{Corollary}[section]

\theoremstyle{remark}
\newtheorem{remark}{Remark}[section]
\numberwithin{equation}{section}

\newcommand{\ud}{\,\mathrm{d}}

\newcommand{\R}{\mathbb R}
\newcommand{\E}{\mathbb E}
\newcommand{\Law}{\mathcal L}
\newcommand{\TV}{\mathrm{TV}}

\newcommand{\Ent}{\mathrm H}

\newcommand{\op}{\mathrm{op}}

\newcommand{\kconv}{\star}  
\usepackage{xparse}

\NewDocumentCommand{\Kop}{m g}{%
  \IfNoValueTF{#2}{%
    K \kconv #1%
  }{%
    (K \kconv #1)(#2)%
  }%
}

\title{Quantitative Convergence for Sequential Interacting Diffusions via Incremental Relative Entropy}

\author{
   Zhenfu Wang$^{1}$ and Xianliang Zhao$^{1}$ \\[2mm]
  {\small $^{1}$Beijing International Center for Mathematical Research, Peking University,}\\
  {\small 5 Yiheyuan Road, Beijing 100871, China}\\
  {\small \texttt{zwang@bicmr.pku.edu.cn} \quad \texttt{xzhaomath@gmail.com}}
}

\date{}

\begin{document}
\maketitle

\begin{abstract}
We study a lower-triangular system of interacting diffusions in which particle \(i\) interacts only with its predecessors through the empirical measure \(\mu^{i-1}_t\). This gives a directed, non-exchangeable approximation of the same McKean--Vlasov diffusion as the classical exchangeable particle system. We introduce an incremental path-space relative entropy adapted to the causal structure,
\[
R_i(T)
=
\Ent\!\left(P^{1:i}_{[0,T]}\,\middle|\,P^{1:i-1}_{[0,T]}\otimes \bar P_{[0,T]}\right),
\]
and prove the sharp estimate \(R_i(T)\lesssim (i-1)^{-1}\).
Furthermore, we obtain convergence of the empirical measure to the McKean--Vlasov law at the canonical \(N^{-1/2}\) scale in negative Sobolev norms. The proof combines a Girsanov representation, a martingale-difference replacement of predecessor empirical measures by averaged conditional measures, an upper-envelope closure, and a negative Sobolev energy estimate.
\end{abstract}

\section{Introduction and main results}\label{sec:intro}

\subsection{Motivation} \label{subsec:motivation}

The approximation of McKean--Vlasov stochastic differential equations (MVSDEs) by interacting particle systems has attracted sustained attention in recent years, motivated both by theoretical questions in mean field limit theory and by applications in a variety of models. McKean--Vlasov diffusions and the associated nonlinear Fokker--Planck equations serve as canonical mean-field models in probability, statistical physics, and applications. A classical and widely used framework is the \emph{mean-field interacting particle system}, where $N$ particles interact symmetrically through the empirical measure of the entire population. This \emph{exchangeable} setting has been extensively studied; see, e.g., \cite{fournier2014propagation,jabin2014review,mckeanpropagation,sznitman1991topics}, and even quantitative convergence rates have been obtained by relative entropy and modulated energy methods for systems with singular interacting forces \cite{jabin2018quantitative,lacker2021hierarchies,serfaty2020mean}. 

We first recall the classical exchangeable particle system. Fix $T>0$ and $d\ge 1$. Let $\sigma\in\R^{d\times d}$ be a constant invertible matrix, and let $b:[0,T]\times\R^d\to\R^d$ and $K:\R^d\times\R^d\to\R^d$ be bounded measurable functions. For $\nu\in\mathcal P(\R^d)$, we define the convolution-like operator 
\[
(K \star  \nu)(x):=\int_{\R^d} K(x,y)\,\nu(\ud y). 
\]
Let $(B^i)_{i\ge1}$ be i.i.d.\ $d$-dimensional Brownian motions. Let $(X^i_0)_{i\ge1}$ be $\R^d$-valued random variables, independent of $(B^i)_{i\ge1}$. The classical mean-field interacting particle system is then given by, for $N\ge 1$ and $i=1,\dots,N$,
\begin{equation}\label{eq:Classical_IPS}
	\mathrm{d}X^{i,N}_t
	=\Big(b(t,X^{i,N}_t)+ (K\star \mu^N_t)(X^{i,N}_t)\Big)\,\mathrm{d}t
	+\sigma\,\mathrm{d}B^i_t,
	\qquad
	\mu^N_t:=\frac1N\sum_{j=1}^N \delta_{X^{j,N}_t}. 
\end{equation}
Under standard assumptions, the system \eqref{eq:Classical_IPS} is (label-)exchangeable, and the propagation of chaos theory
shows that as $N\to\infty$, the empirical measure $\mu^N_t$ and the law of a tagged particle converge to the McKean--Vlasov limit:
\begin{equation}\label{eq:limitSDE}
	\mathrm{d}\bar X_t
	=\Big(b(t,\bar X_t)+ (K\star \bar\rho_t)(\bar X_t)\Big)\,\mathrm{d}t
	+\sigma\,\mathrm{d}B_t,
	\qquad
	\bar\rho_t:=\Law(\bar X_t),
\end{equation}
equivalently, $\bar\rho_t$ solves the nonlinear Fokker--Planck equation
\begin{equation}\label{eq:NFP}
\partial_t \bar\rho_t
=\frac12\nabla\!\cdot(\sigma\sigma^\top \nabla \bar\rho_t)
-\nabla\!\cdot\!\Big(\bar\rho_t\big(b(t,\cdot)+K\star \bar\rho_t\big)\Big),
\qquad \bar\rho_{t=0}=\bar\rho_0.
\end{equation}
From the numerical standpoint, however, \eqref{eq:Classical_IPS} has two intertwined drawbacks: 
(i) at each time step, computing $K\star\mu^N_t$ for all particles typically costs $O(N^2)$ operations (for general kernels),
and (ii) the system is intrinsically \emph{$N$-dependent}: changing $N$ changes the dynamics of \emph{every} particle through $\mu^N_t$,
so improving accuracy by enlarging $N$ generally forces a full re-simulation of the entire $N$-particle system
(with essentially the same $O(N^2)$ cost profile); see, e.g., classical particle methods \cite{bossy1997stochastic}
and modern linear-cost alternatives such as random-batch strategies \cite{jin2020random}.

\medskip

Motivated by these algorithmic imperatives, we study a different particle approximation:
a \emph{sequential (directed) interacting diffusion} in which the interaction is causal along the index.
Namely, the $i$-th particle interacts only with its predecessors $\{1,\dots,i-1\}$:
\begin{equation}\label{eq:seqSDE}
	\left\{
	\begin{aligned}
		\mathrm{d}X^{1}_t &= b(t,X^{1}_t)\,\mathrm{d}t+\sigma\,\mathrm{d}B^{1}_t,\\
		\mathrm{d}X^{i}_t
		&=\Big(b(t,X^{i}_t)+ (K\star \mu^{i-1}_t)(X^{i}_t)\Big)\,\mathrm{d}t
		+\sigma\,\mathrm{d}B^{i}_t,\qquad i = 2, 3, \cdots, N,\\
		\mu^{i-1}_t&:=\frac{1}{i-1}\sum_{j<i}\delta_{X^{j}_t}.
	\end{aligned}
	\right.
\end{equation}
As $N\to\infty$, the empirical measure (and also in particular the terminal particle $X^N$) associated with \eqref{eq:seqSDE} converges to the \emph{same} McKean--Vlasov equation \eqref{eq:limitSDE};
the key difference is structural: \eqref{eq:seqSDE} is \emph{non-exchangeable}, i.e. the particle system \eqref{eq:seqSDE} is not invariant under index permutations and its interaction graph is \emph{directed/lower-triangular} rather than symmetric.
Thus, the sequential system provides a concrete and analytically tractable example of a non-exchangeable particle system with a sequential (directed) interaction architecture.
A similar lower-triangular, predecessor-dependent structure appears in recent mathematical models of causally masked Transformer self-attention, where token representations are treated as interacting particles driven by prefix empirical inputs; see, for instance, \cite{geshkovski2023emergence,karagodin2024clustering,castin2025unified,rigollet2025meanfield,duerinckx2026kinetic}.

\medskip 

\noindent\textbf{Cost profile and ``online'' refinement.}
For a general kernel, simulating \eqref{eq:seqSDE} up to index $N$ still requires evaluating interactions
against $\mu^{i-1}_t$ for each $i$, so the total interaction work scales as $\sum_{i=2}^N O(i-1) = O(N^2/2)$ 
to be compared with $O(N^2)$ for the fully-coupled mean-field system \eqref{eq:Classical_IPS} (the same time-discretization factor multiplies both costs).
The crucial advantage is \emph{marginal}: once $(X^1,\dots,X^N)$ has been simulated,
adding one more particle $X^{N+1}$ leaves the already-generated trajectories unchanged,
and the extra computational cost is only $O(N)$ (again up to the time-discretization factor).
In contrast, for the classical mean-field system \eqref{eq:Classical_IPS},
increasing $N$ changes the drift of every particle through $\mu^N_t$,
so improving accuracy by enlarging the ensemble typically entails re-computing a new $(N+1)$-particle system,
with extra cost again of order $O(N^2)$.
This ``online'' refinement mechanism is one motivation for the sequential model and is illustrated in Figure~\ref{fig:complexity_comparison}.

\begin{figure}[htbp]
	\centering
	\begin{tikzpicture}[node distance=1.5cm, 
		dot/.style={draw, circle, fill=blue!10, minimum size=8mm}]

		\node[dot] (n1) at (0,1.5) {$X^1$};
		\node[dot] (n2) at (2,1.5) {$X^2$};
		\node[dot] (n3) at (2,0) {$X^3$};
		\node[dot] (n4) at (0,0) {$X^4$};

		\begin{scope}[<->, >=Stealth, thick]
			\draw (n1) -- (n2); \draw (n1) -- (n3); \draw (n1) -- (n4);
			\draw (n2) -- (n3); \draw (n2) -- (n4); \draw (n3) -- (n4);
		\end{scope}

		\node[below=0.5cm of n4, xshift=1cm, font=\small\bfseries] {Classical IPS: $O(N^2)$};
		\node[below=0.9cm of n4, xshift=1cm, font=\footnotesize] {(Full Batch Interaction)};
	\end{tikzpicture}
	\hspace{1.5cm}
		\begin{tikzpicture}[node distance=1.2cm, 
		particle/.style={draw, circle, fill=green!10, minimum size=8mm},
		cache/.style={draw, rectangle, fill=orange!20, minimum size=10mm, dashed}]

		\node[particle] (p1) {$X^1, .., X^{i-1}$}; 
		\node[cache, right=0.7cm of p1] (mu) {$\mu^{i-1}$};
		\node[particle, right=0.7cm of mu] (pi) {$X^i$};
		\node[particle, below=1.0cm of pi] (pi1) {$X^{i+1}$};

		\draw[->, >=Stealth, thick] (p1) -- (mu);
		\draw[->, >=Stealth, thick, blue] (mu) -- (pi) node[midway, above, font=\tiny] {input};
		\draw[->, >=Stealth, thick, red] (pi.north) .. controls +(up:10mm) and +(up:10mm) .. (mu.north) 
		node[midway, above, font=\tiny] {update};
		\draw[->, >=Stealth, thick, blue] (mu.south) .. controls +(down:8mm) and +(left:5mm) .. (pi1.west) 
		node[pos=0.4, left, font=\tiny] {input};

		\node[below=2.6cm of mu, font=\small\bfseries] {Sequential IPS: online extension cost \(O(N)\)};
		\node[below=3.0cm of mu, font=\footnotesize] {(Recursive Online Update)};
	\end{tikzpicture}

	\caption{Left: the classical fully coupled system has a symmetric all-to-all interaction graph. Right: the sequential system has a directed lower-triangular architecture, with particle $i$ driven by the empirical measure $\mu^{i-1}$ of its predecessors.}
	\label{fig:complexity_comparison}
\end{figure}

\medskip

\noindent\textbf{Relation to prior work and contribution.}
Quantitative propagation of chaos for sequential particle approximations was recently studied by Du--Jiang--Li~\cite{du2023sequential}. Their work treats recursive schemes with general step-size or weighting sequences and proves quantitative convergence in Wasserstein-type metrics. The present article focuses instead on path-space relative entropy for the lower-triangular system \eqref{eq:seqSDE}. This leads to a genuinely non-uniform viewpoint along the particle index: rather than controlling only a global or averaged error, we control the one-step information cost of adding particle \(i\) given its predecessors.

More broadly, non-exchangeable mean-field limits and entropy methods have been developed for graph, graphon, and heterogeneous interaction structures; see, e.g., \cite{delattre2016note,oliveira2019interacting,lucon2020quenched,bet2020weakly,bayraktar2020graphon,jabin2021mean,jabin2024dense,lacker2024quantitative,zhou2025non}. In particular, Lacker--Yeung--Zhou~\cite{lacker2024quantitative} develop a general subset-entropy framework for non-exchangeable diffusions. The present paper takes a different direction tailored to the lower-triangular setting: we exploit the exact causality of \eqref{eq:seqSDE} to obtain a direct increment-by-increment entropy decomposition and the sharp pointwise scale \(R_i(T)\lesssim (i-1)^{-1}\). When specialized to lower-triangular sequential matrices, general non-exchangeable estimates recover propagation-of-chaos information at a more aggregate scale; the additional information here is the particle-by-particle entropy profile, together with its tail-block and empirical-measure consequences.

At the fluctuation level, Shkolnikov and Yeung~\cite{shkolnikovyeung2026universal} recently proved a universal central limit theorem for non-exchangeable diffusions with matrix-valued interaction strengths under structural denseness and column-sum assumptions; the limiting Gaussian SPDE is the same as in the classical exchangeable mean-field case \cite{wang2023gaussian}. The lower-triangular model considered here lies in a different regime from this dense-matrix universality theory. For the uniform lower-triangular weights
\[
        \xi_{ij}=\frac{\mathbf 1_{\{j<i\}}}{i-1},
\]
the column sums diverge logarithmically with the system size, so the model is not covered by frameworks requiring uniform column-sum control. Moreover, at the central-limit scale, the sequential structure remains visible: the Gaussian limit is different from the classical exchangeable case; see the follow-up work~\cite{WangZhaoSequentialFluctuations}.

The mathematical challenge is that the sequential system is non-exchangeable: the loss of exchangeability breaks the usual tagged-particle symmetry, while the directed interaction creates a non-uniform dependence profile along the index. The main point of the paper is that this non-uniformity can be measured sharply by incremental relative entropy. Our main results are twofold:
\begin{enumerate}
\item \emph{Quantitative incremental relative entropy estimates.} We prove that the incremental relative entropy of the \(i\)-th particle decays at the sharp rate \((i-1)^{-1}\). Summing along the index yields a global path-space entropy bound of logarithmic order \(\log N\), and the same estimates imply quantitative propagation of chaos for tail blocks.
\item \emph{Sharp empirical convergence.} We prove that the empirical measure of the whole sequential system converges to the McKean--Vlasov law at the canonical \(N^{-1/2}\) scale in negative Sobolev norms. This estimate removes the logarithmic loss that one would obtain by using only the global entropy bound.
\end{enumerate}

\subsection{Main results}

In this article we focus on a simple but intrinsically \emph{non-exchangeable} model  \eqref{eq:seqSDE} with a sequential  interaction structure: particle $i$ interacts only with its predecessors $\{1,\dots,i-1\}$. Since the system is not symmetric in the labels, the classical ``fix a tagged particle'' intuition from exchangeable mean-field models is no longer directly applicable. On the other hand, the loss of symmetry comes with a different and exploitable feature: the dependence is \emph{causal} along the particle index and admits a natural one-step decomposition. Our aim is to quantify, in a genuinely non-uniform way, how the \emph{tail} particles become chaotic (asymptotically independent) as the index grows, and how the law of each tail particle approaches the usual McKean--Vlasov limit, with estimates formulated at the level of \emph{path laws} on $[0,T]$.

\medskip

Throughout, we work on a fixed time horizon $[0,T]$ in dimension $d\ge1$, with a constant non-degenerate diffusion matrix $\sigma\in\R^{d\times d}$, a bounded measurable drift field $b:[0,T]\times\R^d\to\R^d$, and a bounded measurable interaction kernel $K:\R^d\times\R^d\to\R^d$, acting on measures through the usual convolution-type operator $K\star\nu$. The sequential system \eqref{eq:seqSDE} introduced in Subsection~\ref{subsec:motivation} is driven by an i.i.d.\ family of $d$-dimensional Brownian motions $(B^i)_{i\ge1}$ and by an initial array $(X^i_0)_{i\ge1}$ independent of $(B^i)_{i\ge1}$. Importantly, we do \emph{not} require $(X^i_0)_{i\ge1}$ to be independent: our quantitative bounds keep track of possible initial correlations through \emph{incremental} relative entropies at time $0$ (see Theorem~\ref{thm:incremental}). The associated mean-field limit is the standard McKean--Vlasov diffusion  \eqref{eq:limitSDE} with coefficients $(b,K,\sigma)$ and marginal law $(\bar\rho_t)_{t\in[0,T]}$ that solves the nonlinear Fokker--Planck equation \eqref{eq:NFP}.   For bounded measurable \(b\) and \(K\) and  non-degenerate constant diffusion,   the finite sequential system is well defined by induction from the standard  well-posedness   theory for SDEs with bounded measurable drift; see, for instance, \cite{hao2022strong}. We fix throughout the corresponding  McKean--Vlasov   solution \(\bar X\) of \eqref{eq:limitSDE}, whose law is denoted by \((\bar\rho_t)_{t\in[0,T]}\) .

\medskip

 \medskip

Our comparison is made on the path space. For $t\in[0,T]$ set $\mathcal{C}_t:=C([0,t];\R^d)$ and denote
\[
\bar P_{[0,t]}:=\Law(\bar X_{[0,t]})\in\mathcal P(\mathcal{C}_t).
\]
For $i\ge1$, let
\[
P^{1:i}_{[0,t]}:=\Law\big((X^1,\dots,X^i)_{[0,t]}\big)\in\mathcal P(\mathcal{C}_t^i).
\]
For simplicity, we also write that $(X^1,\dots,X^i)_{[0,t]} = X^{1: i}_{[0, t]}$. 
For two probability measures $\mu,\nu$ on the same measurable space $Q$, the relative entropy (also called Kullback–Leibler divergence) is defined by 
\[
\Ent(\mu\mid\nu):=\int_Q  \log\!\Big(\frac{\ud \mu}{\ud \nu}\Big)\, \ud \mu \quad\text{if }\mu\ll\nu,
\qquad \text{and } \Ent(\mu\mid\nu):=+\infty\text{ otherwise}.
\]
The sequential structure suggests measuring convergence \emph{incrementally}. For $i\ge1$ define
\begin{equation}\label{eq:Ri-def}
	R_i(t)
	:=\Ent\!\left(P^{1:i}_{[0,t]}\,\middle|\,P^{1:i-1}_{[0,t]}\otimes \bar P_{[0,t]}\right),
\end{equation}
with the obvious convention that $P^{1:0}_{[0,t]}$ is the unit mass on the empty path space so that  $R_1(t)=\Ent(P^{1}_{[0,t]}\mid \bar P_{[0,t]})$.
Thus $R_i(t)$ quantifies, given the predecessor paths, how far the $i$-th trajectory is from an independent copy of the limit trajectory.  Equivalently, by disintegrating \(P^{1:i}_{[0,t]}\) with respect to its first \(i-1\) coordinates, \(R_i(t)\) is the average conditional entropy of the \(i\)-th path law relative to \(\bar P_{[0,t]}\).  We also write the global entropy as 
\begin{equation}\label{eq:SN-def}
	S_N(t):=\Ent\!\left(P^{1:N}_{[0,t]}\,\middle|\,\bar P_{[0,t]}^{\otimes N}\right).
\end{equation}
A standard chain rule yields $S_N(t)=\sum_{i=1}^N R_i(t)$. A more  precise statement and its proof will be given in  Corollary~\ref{cor:incremental}. 

\medskip

Our main result shows that the incremental entropies decay at the optimal scale $1/(i-1)$.

\begin{theorem}\label{thm:incremental}
	Assume that  the diffusion matrix $\sigma$ is invertible and that the drift term $b$ and the interaction $K$ are bounded measurable.
	For $i\ge2$ set
	  \[
	M_{0,i}
	:=
	R_1(0)\vee
	\max_{2\le j\le i}(j-1)R_j(0).
	\] 
	 \[
	R_j(0)
	=
	\Ent\!\left(
	\Law(X_0^{1:j})
	\,\middle|\,
	\Law(X_0^{1:j-1})\otimes\bar\rho_0
	\right),
	\qquad j\ge2.
	\]
	 Then there exists $C_T>0$, depending only on   \(d,T,\|\sigma^{-1}\|,\|b\|_\infty\) and \(\|K\|_\infty\) ,
	 such that for all $t\in[0,T]$ and all $i\ge2$,
	\[
	R_i(t)\le \frac{C_T}{i-1}\,\bigl(1+M_{0,i}\bigr).
	\]
	In particular, if $M_0:=\sup_{i\ge2}M_{0,i}<\infty$, then $R_i(t)\le \frac{C_T}{i-1}(1+M_0)$ uniformly for  $i \geq 2$.  
\end{theorem}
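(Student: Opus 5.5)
We will use Girsanov's theorem to compute each $R_i(t)$ in closed form, and then close a recursive system of inequalities for $\delta_j(t):=\E|K\star\mu^{j-1}_t-K\star\bar\rho_t|^2$ along the index.

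\emph{Girsanov representation.} Conditionally on the predecessor paths $X^{1:i-1}_{[0,t]}$, particle $i$ solves an SDE with drift $b+K\star\mu^{i-1}_s$, where $\mu^{i-1}_s$ is a known path. The reference $\bar P$ has drift $b+K\star\bar\rho_s$. Both drifts are bounded and $\sigma$ is invertible, so Girsanov applies. Together with the chain rule for relative entropy, this gives
\[
R_i(t)=R_i(0)+\frac12\,\E\int_0^t \bigl|\sigma^{-1}\bigl(K\star\mu^{i-1}_s-K\star\bar\rho_s\bigr)(X^i_s)\bigr|^2\ud s .
\]
Summing over $i$ yields the chain rule $S_N=\sum_i R_i$, which is Corollary~\ref{cor:incremental}. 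It therefore suffices to show that the time integral of $\E|(K\star\mu^{i-1}_s-K\star\bar\rho_s)(X^i_s)|^2$ is bounded by $C(1+M_{0,i})/(i-1)$.

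\emph{Martingale-difference replacement.} Write $\phi_s(x,y):=K(x,y)-\int K(x,z)\bar\rho_s(\ud z)$ and expand
\[
K\star\mu^{i-1}_s-K\star\bar\rho_s=\frac1{i-1}\sum_{j<i}\phi_s(\cdot,X^j_s).
\]
Let $\mathcal F_j$ be the $\sigma$-field generated by $X^{1:j}$ on $[0,s]$, and let $\nu^j_s:=\Law(X^j_s\mid\mathcal F_{j-1})$ be the conditional law. Decompose
\[
\phi_s(x,X^j_s)=\Bigl[\phi_s(x,X^j_s)-\int\phi_s(x,y)\,\nu^j_s(\ud y)\Bigr]+\int\phi_s(x,y)\,\nu^j_s(\ud y).
\]
The first bracket is a martingale difference along $j$. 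Its contribution squared is therefore of order $\|K\|_\infty^2/(i-1)$. The evaluation point $X^i$ is not predictable with respect to $\mathcal F_{i-1}$, but this is harmless. We first take a supremum over a countable dense family, or use a Hilbert-space-valued martingale in $L^2(\bar\rho_s)$. We then change measure from the law of $X^i$ to $\bar\rho_s$ using the Donsker--Varadhan inequality, at the cost of a term proportional to $R_i$ that can be absorbed.

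The second term is an averaged conditional measure. By Pinsker's inequality, each summand satisfies
\[
\Bigl|\int\phi_s(x,y)\,\nu^j_s(\ud y)\Bigr|\le 2\|K\|_\infty\,\TV(\nu^j_s,\bar\rho_s)\lesssim \sqrt{H^j_s},
\]
where $H^j_s:=\Ent(\nu^j_s\mid\bar\rho_s)$ is the conditional entropy, and $\E H^j_s\le R_j(s)$. A naive Cauchy--Schwarz bound only yields $\bigl(\frac1{i-1}\sum_{j<i}\sqrt{R_j}\bigr)^2$. Under an ansatz $R_j\le A/(j-1)$ this is of order $A/(i-1)$ with a constant that does not improve, so the recursion does not close.

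\emph{Upper envelope closure (main obstacle).} To close, we treat the mean term more sharply and linearize around $\bar\rho_s$. The averaged signed measure $\frac1{i-1}\sum_{j<i}(\nu^j_s-\bar\rho_s)$ is itself a martingale-type average plus a drift term. We estimate its square using a Gronwall-type bound in $t$, which produces a factor $C t$ or $e^{Ct}$ in front of $\frac1{i-1}\sum_{j<i}R_j(s)$. Concretely, we will aim for an inequality of the form
\[
R_i(t)\le R_i(0)+\frac{C}{i-1}+C\int_0^t\frac1{i-1}\sum_{j<i}\frac{R_j(s)}{\,j-1\,}\,\ud s .
\]
The extra factor $1/(j-1)$ is what makes the closure work. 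We will try to obtain it from the observation that the bias of $\nu^j_s$ relative to $\bar\rho_s$ enters only through the pair interaction, whose strength is $1/(j-1)$ per predecessor.

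We then define the envelope $E_i(t):=\max_{j\le i}(j-1)R_j(t)$, with $R_1$ included. Since
\[
\frac1{i-1}\sum_{j<i}\frac1{(j-1)^2}\lesssim\frac1{i-1},
\]
the inequality above yields $E_i(t)\le M_{0,i}+C+C\int_0^tE_i(s)\ud s$. Gronwall's lemma then gives $E_i(t)\le C_T(1+M_{0,i})$, which is exactly the claimed bound $R_i(t)\le C_T(1+M_{0,i})/(i-1)$.

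If extracting the $1/(j-1)$ factor fails, the fallback is a bootstrap: first prove a weaker bound $R_j\lesssim\log j/j$ from the naive estimate, then insert it into the martingale replacement to upgrade to $1/(i-1)$.
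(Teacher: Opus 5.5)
Your ingredients match the paper's. You use the Girsanov identity for $R_i$. You split $K\star\mu^{i-1}_s-K\star\bar\rho_s$ into two parts: empirical minus averaged conditional measure (martingale differences plus Donsker--Varadhan), and averaged conditional measure minus $\bar\rho_s$ (Pinsker with $\E H^j_s\le R_j(s)$). Conditioning on predecessor paths rather than time-$s$ values is harmless.

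\textbf{The gap is the closure.} You assert that the Pinsker bound $\frac{C}{(i-1)^2}\bigl(\sum_{j<i}\sqrt{R_j(s)}\bigr)^2$ does not close. You replace it by a target inequality with an extra factor $1/(j-1)$ per term, which you only say you will try to obtain. No derivation is given, and your heuristic does not supply one. Each conditional law $\nu^j_s$ typically sits at total-variation distance of order $(j-1)^{-1/2}$ from $\bar\rho_s$, because its drift mismatch $K\star\mu^{j-1}_s-K\star\bar\rho_s$ has that size. These deviations are also coherently correlated through the shared early particles, so their average is not a martingale-type sum. Nothing in your sketch produces the extra factor. The $j=1$ term of your target is undefined, and the fallback bootstrap is not specified. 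As written, the decisive step is missing.

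\textbf{The extra factor is also unnecessary.} The naive bound closes, because it enters inside $\int_0^t$ and Gronwall tolerates an arbitrary constant on $[0,T]$. It only looked circular because of your static ansatz ``$R_j\le A/(j-1)$ at all times''. Here is the argument, which is the paper's proof and uses the envelope you already define at the end.
\begin{itemize}
\item Set $M_i(s):=\max\{R_1(s),\max_{2\le k\le i}(k-1)R_k(s)\}$. Then $\sqrt{R_j(s)}\le\sqrt{M_{i-1}(s)/(j-1)}$ for $j\ge2$.
\item With $\sum_{m\le n}m^{-1/2}\le2\sqrt n$ this gives $\bigl(\sum_{j<i}\sqrt{R_j(s)}\bigr)^2\le10(i-1)M_{i-1}(s)$. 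So the $B$-contribution is at most $\frac{C\|K\|_\infty^2}{i-1}M_i(s)$.
\item Combine this with $\E|A^i_s|^2\le\frac{C\|K\|_\infty^2}{i-1}(R_i(s)+1)$ and $R_i\le M_i$. Multiplying the integrated Girsanov identity by $i-1$ yields $(i-1)R_i(t)\le(i-1)R_i(0)+C\kappa\int_0^t(M_i(s)+1)\,\mathrm{d}s$.
\item Use $R_1(t)\le R_1(0)+\kappa t/2$ and take the maximum over $k\le i$. This gives $M_i(t)\le M_{0,i}+C\kappa\int_0^t(M_i(s)+1)\,\mathrm{d}s$, and Gronwall concludes.
\end{itemize}

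\textbf{A smaller point on the $A$-term.} Neither of your two suggested routes supplies the needed bound.
\begin{itemize}
\item A supremum over a dense family of evaluation points can be of order one when $K$ is merely measurable.
\item An $L^2(\bar\rho_s)$-valued martingale controls functionals of $\|S\|_{L^2(\bar\rho_s)}$. Donsker--Varadhan instead needs $\E\int e^{\lambda|S(x)|^2}\,\bar\rho_s(\mathrm{d}x)$, which Jensen bounds only from below by $\E e^{\lambda\|S\|^2_{L^2(\bar\rho_s)}}$.
\end{itemize}
The working mechanism is scalar Azuma--Hoeffding at each fixed $x$. Then integrate in $x$, using that the $i$-th coordinate is independent of the predecessors under $P^{1:i-1}_{[0,t]}\otimes\bar P_{[0,t]}$. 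Finally apply Donsker--Varadhan on the joint law with $\lambda\asymp(i-1)/\|K\|_\infty^2$.
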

\begin{remark}\label{rem:M0}
	The quantity $M_0$ controls the size of the time-zero incremental entropies at the natural scale $(i-1)^{-1}$.
	In particular, if $(X_0^i)_{i\ge1}$ are i.i.d.\ with law $\bar{\rho}_0$, then each increment vanishes and hence $M_0=0$.
\end{remark}
\medskip
Summing the tail bounds from Theorem~\ref{thm:incremental} yields a $\log N$ growth for the global entropy $S_N(T)$, but with a coefficient depending on $M_0$.
For the global entropy, one can avoid $M_0$ by summing the integral inequality \eqref{eq:Ri-integral-ineq} and applying a discrete Hardy inequality, leading to a bound depending only on the initial global entropy.
\begin{corollary}\label{cor:global-entropy}
	Assume that the diffusion matrix $\sigma$ is invertible and that the drift term $b$ and the interaction $K$ are bounded measurable. 
	Then for every $N\ge2$,
	\[
	S_N(T)
	\le C_T\Big(\,\Ent\!\left(\Law(X^{1:N}_0)\,\middle|\,\bar\rho_0^{\otimes N}\right)
	+ \,\log N \Big),
	\]
	where $C_T>0$ depends only on   \(d,T,\|\sigma^{-1}\|,\|b\|_\infty\) and \(\|K\|_\infty\) .
	In particular, if we further assume that $(X^i_0)_{i\ge1}$ are i.i.d.\ with law $\bar\rho_0$, then $S_N(T)\le C_T\log N$.
\end{corollary}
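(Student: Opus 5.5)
The proof sums over \(i\) the integral inequality \eqref{eq:Ri-integral-ineq} underlying Theorem~\ref{thm:incremental} and absorbs the predecessor entropies with a discrete Hardy inequality. We rely on the chain rule \(S_N(t)=\sum_{i=1}^N R_i(t)\) (Corollary~\ref{cor:incremental}) and on that integral inequality. We take \eqref{eq:Ri-integral-ineq} to have, after the Girsanov representation and the martingale-difference replacement of \(\mu^{i-1}_t\) by averaged conditional measures, the schematic form
\[
R_i(t)\le R_i(0)+\frac{C}{i-1}\,t+\frac{C}{i-1}\int_0^t \frac{1}{i-1}\sum_{j<i}R_j(s)\,\ud s ,\qquad i\ge2,
\]
with \(R_1(t)\le R_1(0)+Ct\) since particle \(1\) has no interaction. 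The right-hand side contains the average of the predecessor increments, i.e.\ \((i-1)^{-1}S_{i-1}(s)\). If the paper's form carries the weight differently, for instance \(\frac{C}{i-1}\int_0^t\sum_{j<i}\frac{R_j(s)}{j}\ud s\), the same steps apply after relabeling.

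\textbf{Step 1: summing in \(i\).} Summing over \(2\le i\le N\), the initial terms give \(\sum_i R_i(0)=\Ent(\Law(X_0^{1:N})\mid\bar\rho_0^{\otimes N})\) by the chain rule at time \(0\). The constant terms give \(C t\sum_{i\le N}(i-1)^{-1}\le Ct\log N\). The interaction terms give
\[
C\int_0^t\sum_{i=2}^N \frac{1}{(i-1)^2}\sum_{j<i}R_j(s)\,\ud s .
\]

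\textbf{Step 2: Hardy-type swap.} Exchanging the order of summation, the interaction term equals
\[
C\int_0^t\sum_{j\ge1}R_j(s)\sum_{i>j}(i-1)^{-2}\,\ud s .
\]
Since \(\sum_{i>j}(i-1)^{-2}\le 2/j\), this gives
\[
S_N(t)\le S_N(0)+Ct\log N+C\int_0^t \sum_{j\le N}\frac{R_j(s)}{j}\,\ud s\le S_N(0)+Ct\log N+C\int_0^t S_N(s)\,\ud s .
\]
Here the weights \(1/j\le1\) are simply discarded, so the cruder bound \(\sum_{i>j}(i-1)^{-2}\le C\) would already suffice. Grönwall's lemma then gives \(S_N(T)\le e^{CT}\bigl(S_N(0)+CT\log N\bigr)\), which is the claim. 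If the integral inequality instead has the weaker shape \(\frac{C}{i-1}\int_0^t\sum_{j<i}R_j\), the swap produces \(\sum_j R_j\sum_{i>j}(i-1)^{-1}\sim S_N\log N\) and loses a logarithm. In that case the \(1/(i-1)\) averaging inside the empirical measure must be retained, which is exactly why the discrete Hardy inequality is needed.

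\textbf{Main obstacle.} The only delicate point is that the constant in the Hardy step must be uniform in \(N\), which requires the extra factor \((i-1)^{-1}\) coming from the martingale-difference (variance) estimate of \(\mu^{i-1}\) against its averaged conditional law. Checking this is a matter of verifying the precise weights in \eqref{eq:Ri-integral-ineq}. Finiteness of all \(R_j(s)\), needed to run Grönwall, follows from boundedness of \(b,K\) and Girsanov whenever \(S_N(0)<\infty\); if \(S_N(0)=\infty\) the bound is trivial. In the i.i.d.\ case \(S_N(0)=0\), so \(S_N(T)\le C_T\log N\).
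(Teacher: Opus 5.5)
There is a genuine gap. Your Step 2 is carried out on an inequality that is not the one the paper proves, and it cannot be obtained from these tools. In \eqref{eq:Ri-integral-ineq} the coupling to the predecessors is
\[
\frac{C}{(i-1)^2}\Big(\sum_{j<i}\sqrt{R_j(s)}\Big)^2 ,
\]
not \(\frac{C}{(i-1)^2}\sum_{j<i}R_j(s)\). This term comes from the bias term \(B^i\) of Lemma~\ref{lem:Bterm}. There, \(\bar\nu^{i-1}_s-\bar\rho_s\) is an average of the predictable deviations \(\nu^j_s-\bar\rho_s\). These are biases, not mean-zero fluctuations, so no variance reduction is available. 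The only control is the triangle inequality plus Pinsker, \(\|\bar\nu^{i-1}_s-\bar\rho_s\|_{\TV}\le\frac{1}{i-1}\sum_{j<i}\sqrt{2\Ent(\nu^j_s\mid\bar\rho_s)}\).

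The martingale-difference estimate of Lemma~\ref{lem:md-exp} supplies the factor \((i-1)^{-1}\) only for the \(A^i\) part, i.e.\ in front of \(R_i(s)+1\). Your schematic form also omits this self-term, though that is harmless since \(\sum_i(i-1)^{-1}R_i\le S_N\). It gives nothing for the predecessor sum, so your ``main obstacle'' paragraph attributes the extra \((i-1)^{-1}\) to the wrong mechanism.

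When all \(R_j\) are equal, the true term equals \(\frac{1}{i-1}\sum_{j<i}R_j\), a factor \(i-1\) larger than your schematic term. Because it is nonlinear in the \(R_j\), your exchange of summations is not available, so ``the same steps apply after relabeling'' is false. If you reduce it by Cauchy--Schwarz to your ``weaker shape'', you get \(\sum_j R_j\sum_{i>j}(i-1)^{-1}\), which is bounded only by \(C\log N\int_0^t S_N\). Gronwall then produces a factor \(N^{CT}\), not a mere logarithmic loss, and Hardy cannot rescue that linear shape either.

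The missing step is to apply the discrete Hardy inequality directly to the square-root structure, with \(a_j=\sqrt{R_j(s)}\) and \(k=i-1\):
\[
\sum_{i=2}^N\frac{1}{(i-1)^2}\Big(\sum_{j<i}\sqrt{R_j(s)}\Big)^2
=\sum_{k=1}^{N-1}\frac{1}{k^2}\Big(\sum_{j=1}^{k}\sqrt{R_j(s)}\Big)^2
\le 4\sum_{j=1}^{N-1}R_j(s)\le 4S_N(s).
\]
So the square of the average of \(\sqrt{R_j}\) is as good as your linear term after summation in \(i\), even though it is not pointwise. With this substitution the rest of your argument goes through and coincides with the paper's proof: the chain rule at time \(0\), \(\sum_{i=2}^N(i-1)^{-1}\le 1+\log N\), the separate bound for \(R_1\), and Gronwall.
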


\begin{remark}[Sharpness of Theorem \ref{thm:incremental}] \label{rem:sharpness}

	Assume for simplicity that $(X_0^i)_{i\ge1}$ are i.i.d.\ with law $\bar\rho_0$, so that $R_i(0)=0$.
	The Girsanov identity \eqref{eq:Ri-integral} then reads
	  \[
	R_i(T)=\frac12\,\E\int_0^T \big|\sigma^{-1}\Delta_s^i\big|^2\, \ud s,
	\qquad
	\Delta_s^i=\Kop{\mu_s^{i-1}}{X_s^i}-\Kop{\bar\rho_s}{X_s^i}.
	\] 
	To assess the best possible rate dictated by sampling, consider the idealized i.i.d.\ benchmark in which the
	input empirical measure is formed by $(i-1)$ independent copies of the limit process.
	Let $\bar X$ solve the McKean--Vlasov SDE and let $(\bar X^{j})_{j\ge1}$ be i.i.d.\ copies of $\bar X$, independent of $\bar X$.
	Set $\hat\mu_s^{i-1}:=\frac1{i-1}\sum_{j=1}^{i-1}\delta_{\bar X_s^{j}}$ and
	  \[
	\hat\Delta_s^{i}:=\Kop{\hat\mu_s^{i-1}}{\bar X_s}-\Kop{\bar\rho_s}{\bar X_s}.
	\] 
	Conditioning on $\bar X_s$, the random variables $K(\bar X_s,\bar X_s^{j})$ are i.i.d.\ with conditional mean
	$\E[K(\bar X_s,\bar X_s')\mid \bar X_s]$, where $\bar X'$ is an independent copy of $\bar X$.
	By the usual \(L^2\)-type law of large numbers computations, we obtain the exact identity
	  \[
	\E\big[|\hat\Delta_s^{i}|^2\big]	=\frac{1}{i-1}\, 	\E\Big[\big|
	K(\bar X_s,\bar X_s')-\Kop{\bar\rho_s}{\bar X_s}
	\big|^2\Big].
	\] 
	Therefore the corresponding ``increment energy'' satisfies the exact scaling
	\[
	\hat R_i(T)
	:=\frac12\,\E\int_0^T \big|\sigma^{-1}\hat\Delta_s^{i}\big|^2\, \ud s
	=\frac{1}{2(i-1)}\int_0^T
	\E\Big[\big|\sigma^{-1}\big(\tilde K_s-\E[\tilde K_s\mid \bar X_s]\big)\big|^2\Big]\, \ud s,
	\]
	with $\tilde K_s:=K(\bar X_s,\bar X_s')$.
	  
	Thus
	\[
	\widehat R_i(T)=\frac{C_{\rm iid}}{i-1},
	\qquad
	C_{\rm iid}:=\frac12\int_0^T
	\E\left[
	\left|\sigma^{-1}\left(\tilde K_s-\E[\tilde K_s\mid \bar X_s]\right)\right|^2
	\right]\,ds .
	\]
	In particular, whenever \(C_{\rm iid}>0\), the idealized i.i.d. benchmark has exactly the order \((i-1)^{-1}\).  This shows that the   \(1/(i-1)\)  order is the canonical   sampling barrier in the non-degenerate case,  even under the most favorable i.i.d.  input, and thus provides a natural benchmark for the sharpness of our incremental entropy bound.

	\smallskip

	Finally, we emphasize that the logarithmic growth of the \emph{global} entropy in our sequential model reflects its
	triangular structure: particle $i$ effectively averages over only $(i-1)$ predecessors, so summing the canonical scale
	$(i-1)^{-1}$ over $i=2,\dots,N$ yields $\sum_{i=2}^N (i-1)^{-1}\sim \log N$.
	This mechanism is fundamentally different from the classical exchangeable mean-field system, where each particle interacts
	with the empirical measure of all $N$ particles and global entropy bounds of order $O(1)$ are available; see, e.g.,
	\cite{jabin2018quantitative}.

\end{remark}

\medskip

The directed (triangular) dependence implies a genuinely non-uniform approximation along the index:
early particles may be strongly correlated, while the tail particles should become increasingly well averaged,
since particle $i$ only sees $(i-1)$ predecessors through $\mu_t^{i-1}$. The following corollary gives quantitative  \emph{tail propagation of chaos}  on path space by controlling the tail block by the last incremental entropies.

\begin{corollary}\label{cor:tail-poc}
Under the assumptions of Theorem  ~\ref{thm:incremental},  assume further that the initial random variables   \((X^i_0)_{i\ge1}\)  are i.i.d.\ with common law   \(\bar\rho_0\). Let
\[
Y^{N,m}:=\big(X^{N-m+1}_{[0,T]},\ldots,X^N_{[0,T]}\big)\in C([0,T];\mathbb R^d)^m.
\] 
Then, for \(1\le m\le N\),
\[
\Ent\!\left(
\Law(Y^{N,m})
\,\middle|\,
\bar P_{[0,T]}^{\otimes m}
\right)
\le
\sum_{k=N-m+1}^{N} R_k(T).
\] 
In particular, if   \(m\le N/2\), then
\begin{equation}\label{eq:tail-entropy}
\Ent\!\left(
\Law(Y^{N,m})
\,\middle|\,
\bar P_{[0,T]}^{\otimes m}
\right)
\le
C_T\,\frac{m}{N}.
\end{equation} 
\end{corollary}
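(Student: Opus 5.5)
The plan has two parts. First we prove the entropy inequality by the chain rule. Then we insert the bound of Theorem~\ref{thm:incremental} and sum.

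Write $P^{1:N}$ for $P^{1:N}_{[0,T]}$ and $\bar P$ for $\bar P_{[0,T]}$, and set $n:=N-m$. The key observation is that $R_k(T)$ is an average conditional entropy. For $k\ge n+1$, disintegrate $P^{1:k}$ with respect to the first $k-1$ coordinates, with kernel $P^{k\mid 1:k-1}(\cdot\mid x^{1:k-1})$. Then
\[
R_k(T)=\int \Ent\!\left(P^{k\mid 1:k-1}(\cdot\mid x^{1:k-1})\,\middle|\,\bar P\right)P^{1:k-1}(\ud x^{1:k-1}).
\]
Iterating this over $k=n+1,\dots,N$ gives
\[
\sum_{k=n+1}^N R_k(T)=\Ent\!\left(P^{1:N}\,\middle|\,P^{1:n}\otimes\bar P^{\otimes m}\right).
\]
This is the chain rule for relative entropy, as in Corollary~\ref{cor:incremental}, applied with reference measure $P^{1:n}\otimes \bar P^{\otimes m}$ in place of $\bar P^{\otimes N}$. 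In that reference measure the last $m$ coordinates are independent of the first $n$, so the conditional reference law of coordinate $k$ is always $\bar P$. Finally, the data-processing inequality for the projection onto the last $m$ coordinates maps $P^{1:N}$ to $\Law(Y^{N,m})$ and maps $P^{1:n}\otimes\bar P^{\otimes m}$ to $\bar P^{\otimes m}$. This yields the first claim.

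One technical point needs care: the chain rule when some term is infinite, and measurability of the disintegration. Path space $\mathcal C_T$ is Polish, so regular conditional laws exist. If the left-hand sum is finite, then every density exists and the product formula for Radon--Nikodym derivatives holds. If it is infinite, there is nothing to prove. The i.i.d. assumption is not needed for this inequality. It is used only to make $M_0=0$ (Remark~\ref{rem:M0}).

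For \eqref{eq:tail-entropy}, Theorem~\ref{thm:incremental} with $M_0=0$ gives $R_k(T)\le C_T/(k-1)$ for $k\ge2$. Every $k\ge N-m+1$ is at least $2$ except in the degenerate case $N=m=1$. That case is excluded because $m\le N/2$ forces $N\ge2$. Also $k-1\ge N-m\ge N/2$. Hence
\[
\sum_{k=N-m+1}^N R_k(T)\le m\cdot\frac{2C_T}{N},
\]
and we rename the constant $2C_T$ as $C_T$.

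None of these steps is a serious obstacle. The only delicate point is justifying the conditional chain rule with the mixed reference measure $P^{1:n}\otimes\bar P^{\otimes m}$ rather than a full product measure. We handle it by induction on $k$ from $n+1$ to $N$, each time using the identity
\[
\Ent(\mu^{1:k}\mid\nu^{1:k-1}\otimes\bar P)=\Ent(\mu^{1:k-1}\mid\nu^{1:k-1})+R_k(T)
\]
with $\nu^{1:k-1}=P^{1:n}\otimes\bar P^{\otimes(k-1-n)}$, together with the fact that $\Ent(P^{1:n}\mid P^{1:n})=0$.
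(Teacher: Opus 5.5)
Your proof is correct and follows the paper's argument. It uses data processing for the projection onto the last $m$ coordinates, then the chain-rule identity $\Ent\bigl(P^{1:N}_{[0,T]}\mid P^{1:N-m}_{[0,T]}\otimes\bar P_{[0,T]}^{\otimes m}\bigr)=\sum_{k=N-m+1}^{N}R_k(T)$, and finally Theorem~\ref{thm:incremental} with $M_{0,i}=0$ and $k-1\ge N/2$. The paper cites Corollary~\ref{cor:incremental}(ii) for the chain-rule identity, while you re-derive it by the same telescoping induction; your explicit factor $2C_T$ is the constant the paper absorbs into $C_T$.
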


\begin{remark}\label{rem:tail-tv}
	By Pinsker's inequality,
	\[
	\big\|\Law(Y^{N,m})-\bar P_{[0,T]}^{\otimes m}\big\|_{\TV}
	\le \sqrt{2\,\Ent\!\left(\Law(Y^{N,m})\,\middle|\,\bar P_{[0,T]}^{\otimes m}\right)}.
	\]
	Combining with \eqref{eq:tail-entropy} yields
	$\big\|\Law(Y^{N,m})-\bar P_{[0,T]}^{\otimes m}\big\|_{\TV}\lesssim \sqrt{m/N}$, and in particular
	$\|\Law(X^N_{[0,T]})-\bar P_{[0,T]}\|_{\TV}\lesssim N^{-1/2}$.
\end{remark}

\medskip
The previous results quantify propagation of chaos at the level of fixed (or finitely many) tail particle paths. 
We now turn to the collective behavior of the system as a whole, described by the empirical measure process
\[
\mu_t^N=\frac{1}{N}\sum_{i=1}^N \delta_{X_t^i}, \qquad t\in[0,T].
\]
 For mean-field systems, quantitative control of $\mu^N-\bar\rho$   can  be obtained from global relative entropy bounds; see \cite{wang2023gaussian}. In the sequential setting, we   instead  exploit the triangular   particle structure directly  and work in negative Sobolev norms,   which yields  the optimal $N^{-1/2}$ scale.
  The next theorem uses an additional regularity assumption on the limiting velocity field \(V=b+K\star\bar\rho\). This assumption is not used in the entropy argument of Section~\ref{sec:incremental-proof}. Rather, it is necessary for the particle-system central limit theorem, as explained in \cite{wang2023gaussian}.
 \begin{theorem}\label{thm:empirical}

 Assume that the diffusion matrix   \(\sigma\)  is invertible and that   \(b\) and \(K\) are bounded measurable.  Suppose further that the initial random variables   \((X^i_0)_{i\ge1}\)  are i.i.d.\ with common law   \(\bar\rho_0\). Fix \(\beta>d/2+2\), and let \(m\in\mathbb N\) be any integer with \(m>\beta\), and define the limiting velocity field
\[
V_t(x):=b(t,x)+\Kop{\bar\rho_t}{x}.
\] 
 Assume that
\begin{equation}\label{eq:assump-v-reg}
V_T:=\sup_{t\in[0,T]}\|V_t\|_{W^{m,\infty}}<\infty.
\end{equation}
 Then there exists a constant   \(C_T>0\) , depending only on
  \[
\beta,\ T,\ \|\sigma\|_{\op},\ \|\sigma^{-1}\|_{\op},\ \|b\|_\infty,\ \|K\|_\infty,\ \text{and }V_T,
\]
 such that for all   \(N\ge2\),
\begin{equation}\label{eq:empirical-rate}
\E\Big[
\sup_{0\le t\le T}
\|\mu_t^N-\bar\rho_t\|_{H^{-\beta}}^2
\Big]
+
\E\int_0^T
\|\mu_t^N-\bar\rho_t\|_{H^{-\beta+1}}^2\,\mathrm dt
\le
\frac{C_T}{N}.
\end{equation} 

 \end{theorem}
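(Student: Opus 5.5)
We apply Itô's formula to the $H^{-\beta}$ energy of the fluctuation and close with a Gronwall argument. The regularity \eqref{eq:assump-v-reg} handles the linear transport term, and Theorem~\ref{thm:incremental} handles the nonlinear interaction mismatch.

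\textbf{Setup.} Write $\eta_t:=\mu^N_t-\bar\rho_t$ and $\Lambda:=(I-\Delta)^{-\beta/2}$. Then $\|\eta_t\|_{H^{-\beta}}^2=\int|\widehat{\eta_t}(\xi)|^2\langle\xi\rangle^{-2\beta}\ud\xi$. Equivalently, it is $\iint G_\beta(x-y)\,\eta_t(\ud x)\eta_t(\ud y)$, where $G_\beta$ is the Bessel kernel. Since $\beta>d/2+2$, the kernel $G_\beta$ lies in $C^{2}_b$ with bounded derivatives up to order $2+\epsilon$. Hence Itô's formula applies to $t\mapsto\langle \eta_t,G_\beta*\eta_t\rangle$.

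\textbf{Decomposition of the dynamics.} For a test function $\varphi$, Itô's formula for each particle and \eqref{eq:NFP} give
\[
\ud\langle\eta_t,\varphi\rangle=\langle\eta_t,L_t\varphi\rangle\ud t+\frac1N\sum_{i}\nabla\varphi(X^i_t)\cdot\Delta^i_t\,\ud t+\ud M^\varphi_t .
\]
Here $L_t\varphi=\frac12\sigma\sigma^\top:\nabla^2\varphi+V_t\cdot\nabla\varphi$ is the linearized generator. The term $\Delta^i_t=\Kop{\mu^{i-1}_t}{X^i_t}-\Kop{\bar\rho_t}{X^i_t}$ is the interaction mismatch, with $\Delta^1_t:=-\Kop{\bar\rho_t}{X^1_t}$ of size $O(1)$. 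Finally, $M^\varphi_t=\frac1N\sum_i\int_0^t\nabla\varphi(X^i_s)\cdot\sigma\ud B^i_s$.

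\textbf{Energy estimate.} Plugging $\varphi=G_\beta*\eta_t$ yields three contributions.

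First, the diffusion part of $L_t$ produces the dissipation $-c\|\eta_t\|^2_{H^{-\beta+1}}$, using the invertibility of $\sigma$.

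Second, the transport part $\langle\eta, V\cdot\nabla\Lambda^2\eta\rangle$ is a commutator. Because $V$ is $W^{m,\infty}$ with $m>\beta$, commutator estimates bound it by $C V_T\|\eta\|_{H^{-\beta}}\|\eta\|_{H^{-\beta+1}}$. This is absorbed by Young's inequality into the dissipation plus $C\|\eta\|_{H^{-\beta}}^2$.

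Third, the martingale part has quadratic variation of order $N^{-1}\|\nabla G_\beta\|_\infty^2\|\eta\|^2_{H^{-\beta}}$. The Itô correction is $\frac{1}{N^2}\sum_i \mathrm{tr}(\sigma\sigma^\top\nabla^2G_\beta(0))=O(1/N)$. Burkholder--Davis--Gundy then controls the supremum.

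\textbf{Interaction term.} Write $\varphi_t:=G_\beta*\eta_t$. The remaining term is $\frac1N\sum_i\nabla\varphi_t(X^i_t)\cdot\Delta^i_t$. A crude bound via Cauchy--Schwarz is
\[
\|\nabla\varphi_t\|_\infty\,\frac1N\sum_i|\Delta^i_t|\lesssim\|\eta_t\|_{H^{-\beta+1}}\,\frac1N\sum_i|\Delta^i_t|.
\]
By the Girsanov identity and Theorem~\ref{thm:incremental}, $\E\int_0^T|\Delta^i|^2\lesssim (i-1)^{-1}$. This gives $\E\int_0^T\big(\frac1N\sum_i|\Delta^i|\big)^2\le \frac1N\sum_i\E\int|\Delta^i|^2\lesssim\frac{\log N}{N}$, which loses a logarithm.

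This term is the main obstacle. To remove the $\log N$, we exploit cancellation across $i$ rather than taking absolute values. We replace $\mu^{i-1}_t$ by the averaged conditional measures $\frac1{i-1}\sum_{j<i}\Law(X^j_t\mid\mathcal F^{j-1})$. This is the martingale-difference replacement announced in the abstract. Conditionally on predecessors, the increments $\delta_{X^j_t}-\Law(X^j_t\mid\cdot)$ are martingale differences in $j$. Hence
\[
\sum_i\frac1{i-1}\sum_{j<i}(\cdots)=\sum_j\Big(\sum_{i>j}\frac{1}{i-1}\Big)(\cdots).
\]
Its pairing with the smooth test function $\nabla\varphi_t\, K$ has second moment $\sum_j\big(\log(N/j)\big)^2/N^2\lesssim 1/N$, since $\int_0^1\log^2(1/x)\,\ud x<\infty$. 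What remains is the averaged conditional measure minus $\bar\rho_t$. Pairing it with $K$ and integrating against $\eta_t$ via $\nabla\varphi_t$ gives a term that is linear in a negative-Sobolev quantity. We bound it by an upper-envelope argument of the form $C\big(\|\eta\|^2_{H^{-\beta}}+\sup_{j}\E\|\mu^j-\bar\rho\|^2_{H^{-\beta}}\big)$. For this we use the bound $\|\Law(X^j_t\mid\cdot)-\bar\rho_t\|^2\lesssim R_j$ from Pinsker's inequality together with Theorem~\ref{thm:incremental}.

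\textbf{Closing the argument.} Set $F(N):=\sup_{n\le N}\, n\,\E\sup_t\|\mu^n_t-\bar\rho_t\|^2_{H^{-\beta}}$. The estimates above give a recursive inequality, and Gronwall's lemma closes it. This yields $F(N)\le C_T$, which together with the integrated dissipation gives \eqref{eq:empirical-rate}.
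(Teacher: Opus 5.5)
There is a genuine gap, but the obstacle behind it is not real. Your energy framework matches the paper's: the Bessel-kernel energy, It\^o's formula with the $O(1/N)$ self-interaction correction, $H^{-\beta+1}$ dissipation from the nondegenerate diffusion, the $\|\eta\|_{H^{-\beta}}\|\eta\|_{H^{-\beta+1}}$ bound on the transport term, and BDG for the martingale. Your first bound on the interaction term, $\|\nabla\varphi_t\|_\infty\frac1N\sum_i|\Delta_t^i|\lesssim\|\eta_t\|_{H^{-\beta+1}}\frac1N\sum_i|\Delta_t^i|$ followed by Young's inequality, is also exactly what the paper does. The $\log N$ you then lose comes only from using Cauchy--Schwarz in $i$, $(\frac1N\sum_i a_i)^2\le\frac1N\sum_i a_i^2$. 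That inequality is wasteful for the non-uniform profile $a_i\sim(i-1)^{-1/2}$. Minkowski's inequality in $L^2(\Omega\times[0,T])$ gives instead
\[
\Big(\E\int_0^T\Big(\frac1N\sum_{i=1}^N|\Delta_r^i|\Big)^2\ud r\Big)^{1/2}
\le\frac1N\sum_{i=1}^N\Big(\E\int_0^T|\Delta_r^i|^2\ud r\Big)^{1/2}
\lesssim\frac1N\Big(1+\sum_{i=2}^N(i-1)^{-1/2}\Big)\lesssim N^{-1/2}.
\]
So the interaction term already contributes $O(1/N)$. This is Lemma~\ref{lem:J-term}, and with it your argument closes by Gronwall in time, as in Lemmas~\ref{lem:gronwall-reduction} and \ref{lem:K-term}.

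As written, however, your proof rests on the resummation step, and that step fails. After exchanging sums, the $j$-th increment is evaluated at $x=X_t^i$ and multiplied by $\nabla\varphi_t(X_t^i)$, summed over $i>j$ with weights $\frac1{i-1}$. Both the evaluation point and the weight involve the later particles $X_t^i$, $i>j$, and $\varphi_t=G_\beta*\eta_t$ depends on all $N$ particles. These coefficients are therefore not $\mathcal G_t^{j-1}$-measurable, so the products are not martingale differences. The orthogonality behind your count $\sum_j\log^2(N/j)/N^2$ is then unavailable. Even for a single $i$, the correlation between $X_t^i$ and its predecessors is what forces the paper to work under the decoupled law $Q^{1:i}_{[0,t]}$ and transfer back by Donsker--Varadhan, at the cost of $R_i(t)$.

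The closing step is also unspecified. As written, $\sup_j\E\|\mu_t^j-\bar\rho_t\|^2_{H^{-\beta}}$ is of order one, since it includes $j=1$, and that would destroy the $1/N$ rate. You also do not say what recursion in $N$ the quantity $F(N)$ satisfies, or how a Gronwall argument, which acts in time, would close it. Replace this whole part with the Minkowski bound above.
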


The i.i.d.\ initial condition in Theorem~\ref{thm:empirical} is used to keep the empirical-measure estimate focused on the sequential interaction error: it gives \(R_i(0)=0\) for \(i\ge2\) and the standard \(N^{-1/2}\) initial empirical fluctuation bound in \(H^{-\beta}\). Extensions to correlated initial arrays should be possible under corresponding initial empirical fluctuation and incremental entropy assumptions, but we do not pursue this here.

 \begin{remark}\label{rem:empirical-why}
	The Donsker--Varadhan variational  formula combined with the global relative entropy estimate
	$S_N(T)\lesssim \log N$, as in \cite{wang2023gaussian},
	typically yield at best a rate of order $\sqrt{\log N/N}$ for the convergence of empirical measures in $H^{-s}$.

	The estimate \eqref{eq:empirical-rate} removes this logarithmic loss.
		The key point is that we do not rely on $S_N(T)$ alone: instead, we exploit the sequential structure
		and analyze the Fokker--Planck equation in negative Sobolev norms.
		This yields the canonical $N^{-1/2}$ scale, which is optimal for empirical averages.
\end{remark}

  \begin{remark}[Regularity assumption on the limiting velocity field \(V=b+K\star\bar\rho\)] \label{rem:v-regularity}

Theorem~\ref{thm:empirical} assumes  a Sobolev-multiplier   bound on
\[
V_t=b(t,\cdot)+\Kop{\bar\rho_t}.
\]
This  is sufficient for the   negative Sobolev energy estimate and is independent of the entropy argument in Section~\ref{sec:incremental-proof}. For example, the assumption holds if \(b\in L^\infty([0,T];W^{m,\infty}(\mathbb R^d))\) and \(\Kop{\bar\rho_t}\) is uniformly bounded in \(W^{m,\infty}\).  In the translation-invariant   case \(K(x,y)=K_0(x-y)\), one has \(\Kop{\bar\rho_t}=K_0*\bar\rho_t\); if \(\bar\rho_t\) admits a smooth density and
\[
\sup_{t\le T}\sum_{|\alpha|\le m}\|\partial^\alpha\bar\rho_t\|_{L^1}<\infty,
\]
then \(K_0*\bar\rho_t\in W^{m,\infty}\) whenever \(K_0\in L^\infty\).

 \end{remark}

  \paragraph{Proof strategy.}
 We view the joint law as being built by adjoining one trajectory at a time. This gives the exact chain rule \(S_N(t)=\sum_{i=1}^N R_i(t)\). The core of the proof is then to control each increment directly. First, a Girsanov identity expresses \(R_i(t)\) through the quadratic energy of the drift mismatch. Second, to reveal averaging along the predecessors, we replace the predecessor empirical measure by an averaged conditional measure, following the conditional-measure viewpoint used in \cite{wang2022mean}. This separates a martingale-difference term from a predictable term. The martingale term is controlled by exponential-integrability and the Donsker--Varadhan variational formula under a decoupled reference law, while the predictable term is bounded in terms of previous increments by Pinsker's inequality. Finally, an upper-envelope closure and a Hardy inequality yield the sharp \(1/(i-1)\) decay, the logarithmic global entropy bound, and tail-block propagation of chaos.

For the empirical-measure process, we do not rely on the global entropy bound, which would introduce an extraneous \(\sqrt{\log N}\) loss. Instead, we write the \(H^{-\beta}\)-norm as a Bessel-kernel energy and follow its evolution along the sequential dynamics. The interaction-error term is controlled by the incremental entropy estimate, and the martingale contribution is absorbed through the dissipative part of the negative Sobolev energy inequality.

\paragraph{Organization of the article.}
Section~\ref{sec:prelim} collects the information-theoretic and Sobolev tools used throughout the paper. Section~\ref{sec:incremental-proof} proves the incremental relative entropy estimate, derives the global entropy and tail-chaos consequences, and includes a discussion of general sequential weights. Section~\ref{sec:consequences} proves the sharp empirical-measure convergence estimate in negative Sobolev norms.

\section{Preliminaries}\label{sec:prelim}

This section collects notation and standard information-theoretic tools used repeatedly in the proof of
Theorem~\ref{thm:incremental}. We also recall a Girsanov--entropy identity for diffusions with the same
non-degenerate constant diffusion matrix, which turns a comparison of path laws into a quadratic energy of the
drift difference.

\smallskip
\noindent\emph{Notation and conventions.}  Fix $T>0$ and $d\ge 1$. For $t\in[0,T]$, let $\mathcal C_t:=C([0,t];\R^d)$ endowed with its Borel $\sigma$-field, and set $\mathcal C_t^i:=(\mathcal C_t)^i$.
For a measurable map $T$ and a measure $\mu$, we write $T_\#\mu$ for the push-forward  measure of $\mu$ under $T$.
For a finite signed measure $\nu$ on $\R^d$ and a test function $\varphi$, we use the notation
\[
\langle \nu,\varphi\rangle:=\int_{\R^d}\varphi\,\mathrm d\nu.
\]
For probability measures $\mu$ and $\nu$ on a common measurable space, $\Ent(\mu\mid\nu)$ denotes the relative entropy, with the convention that $\Ent(\mu\mid\nu)=+\infty$ if $\mu\not\ll\nu$.
We denote by $\|\cdot\|_{\TV}$ the total variation distance.
We also set $\|K\|_\infty:=\|K\|_{L^\infty(\R^d\times\R^d)}$, and denote by $\|\sigma^{-1}\|$ the operator norm of $\sigma^{-1}$.
Throughout, $C$ denotes a finite positive constant whose value may change from line to line.

 For $s\in\R$, we denote by $H^s(\R^d)$ the Sobolev space with norm
\[
\|f\|_{H^s}^2:=\int_{\R^d}(1+|\xi|^2)^s\,|\hat f(\xi)|^2\ud \xi,
\]
where $\hat f$ denotes the Fourier transform of the function $f$. 
The dual space of $H^s$ is denoted by $H^{-s}$, and for $\nu\in H^{-s}$ we use the dual norm
\begin{equation}\label{eq:Hminus-def-prelim}
	\|\nu\|_{H^{-s}}
	:=\sup_{\|\psi\|_{H^s}\le1}\langle \nu,\psi\rangle .
\end{equation}
When $s>d/2$, we define the Bessel kernel $G_s:=(1-\Delta)^{-s}\delta_0$ by
\[
\widehat{G_s}(\xi)=(1+|\xi|^2)^{-s}.
\]
Since $(1+|\xi|^2)^{-s}\in L^1(\R^d)$ for $s>d/2$, the inverse Fourier transform yields
$G_s\in C_b(\R^d)$ (in particular $G_s$ is continuous and bounded).
Moreover, if $s>d/2+1$, then $|\xi|^2(1+|\xi|^2)^{-s}\in L^1(\R^d)$, hence $G_s\in C_b^2(\R^d)$.

For $s>d/2$, the $H^{-s}$-norm admits the kernel representation
\begin{equation}\label{eq:Hminus-kernel-prelim}
	\|\nu\|_{H^{-s}}^2
	=\iint_{\R^d\times\R^d} G_s(x-y)\,\nu(\ud x)\nu( \ud y).
\end{equation}
Finally, when $s>d/2+1$, we have the Sobolev embedding $H^s(\R^d)\hookrightarrow W^{1,\infty}(\R^d)$, namely
\begin{equation}\label{eq:Sobolev-embed-prelim}
	\|\psi\|_\infty+\|\nabla\psi\|_\infty\le C_s\,\|\psi\|_{H^s}.
\end{equation}

  We shall use the standard  It\^o   isometry and  Burkholder--Davis--Gundy   inequality for the real-valued martingales appearing in Section~\ref{sec:consequences}.

 \begin{lemma}\label{lem:multiplier}
	Let $\beta>0$ and let \(m\in\mathbb N\) satisfy \(m>\beta\). There exists $C_\beta>0$ such that for all
	$f\in W^{m,\infty}(\R^d)$ and $g\in H^\beta(\R^d)$,
	\[
	\|fg\|_{H^\beta}\le C_\beta \|f\|_{W^{m,\infty}}\|g\|_{H^\beta}.
	\]
	Consequently, if $\Phi\in H^{\beta+1}(\R^d)$ and $v\in W^{m,\infty}(\R^d;\R^d)$, then
	\[
	\|v\cdot\nabla\Phi\|_{H^\beta}\le C_\beta \|v\|_{W^{m,\infty}}\|\Phi\|_{H^{\beta+1}}.
	\]
\end{lemma}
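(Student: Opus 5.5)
We prove the multiplier bound first for integer regularity and then extend it to fractional $\beta$ by complex interpolation. Once that is done, the statement about $v\cdot\nabla\Phi$ follows at once.

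\emph{Step 1 (integer case).} Let $k\in\mathbb N$ with $k\le m$. For integer order, $\|g\|_{H^k}$ is equivalent to $\sum_{|\alpha|\le k}\|\partial^\alpha g\|_{L^2}$, with constants depending only on $k,d$. By the Leibniz rule,
\[
\partial^\alpha(fg)=\sum_{\gamma\le\alpha}\binom{\alpha}{\gamma}\,\partial^\gamma f\,\partial^{\alpha-\gamma}g .
\]
Hence
\[
\|\partial^\alpha(fg)\|_{L^2}\le C\sum_{\gamma\le\alpha}\|\partial^\gamma f\|_{L^\infty}\|\partial^{\alpha-\gamma}g\|_{L^2}\le C\|f\|_{W^{k,\infty}}\|g\|_{H^k}.
\]
So multiplication by $f$, written $M_f g:=fg$, is bounded on $H^k$ with norm at most $C_k\|f\|_{W^{m,\infty}}$ for every integer $0\le k\le m$. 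The case $k=0$ is the trivial $L^2$ bound.

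\emph{Step 2 (interpolation).} Fix $\beta\in(0,m)$ and set $k_0=\lfloor\beta\rfloor$ and $k_1=k_0+1$. Since $m>\beta$ and $m$ is an integer, $k_1\le m$, so Step 1 applies at both endpoints. The Bessel potential spaces form a complex interpolation scale, $[H^{k_0},H^{k_1}]_\theta=H^\beta$ with $\beta=(1-\theta)k_0+\theta k_1$ and equivalent norms. Because $M_f$ is linear, it is bounded on $H^\beta$ with
\[
\|M_f\|_{H^\beta\to H^\beta}\le C\|M_f\|_{H^{k_0}\to H^{k_0}}^{1-\theta}\|M_f\|_{H^{k_1}\to H^{k_1}}^{\theta}\le C_\beta\|f\|_{W^{m,\infty}} .
\]
This is the only step that is not routine bookkeeping, and it reduces to citing the standard identification of interpolation spaces of Bessel potential spaces.

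If one prefers to avoid interpolation, an alternative is to write $\|h\|_{H^\beta}^2\simeq\|h\|_{H^{k_0}}^2+\sum_{|\alpha|=k_0}[\partial^\alpha h]_{\dot H^{s}}^2$ with $s=\beta-k_0\in(0,1)$, and to use the Gagliardo difference-quotient seminorm. The difference of a product splits as
\[
fg(x+h)-fg(x)=f(x+h)\big(g(x+h)-g(x)\big)+g(x)\big(f(x+h)-f(x)\big).
\]
The term $|f(x+h)-f(x)|\le\min(2\|f\|_\infty,\|\nabla f\|_\infty|h|)$ is integrable against $|h|^{-d-2s}$ because $s<1$. Applying this to each Leibniz term, whose factors involve at most $k_0+1\le m$ derivatives of $f$, gives the same bound.

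\emph{Step 3 (consequence).} Write
\[
v\cdot\nabla\Phi=\sum_{l=1}^d v_l\,\partial_l\Phi .
\]
Since $\Phi\in H^{\beta+1}$, each $\partial_l\Phi\in H^\beta$ with $\|\partial_l\Phi\|_{H^\beta}\le\|\Phi\|_{H^{\beta+1}}$, because $|\xi_l|^2(1+|\xi|^2)^\beta\le(1+|\xi|^2)^{\beta+1}$. Applying Step 2 with $f=v_l$ and $g=\partial_l\Phi$ and summing over $l$ gives
\[
\|v\cdot\nabla\Phi\|_{H^\beta}\le C_\beta\|v\|_{W^{m,\infty}}\|\Phi\|_{H^{\beta+1}},
\]
after enlarging $C_\beta$ by a factor of $d$.
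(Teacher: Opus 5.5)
Your proof is correct, and it takes a more explicit route than the paper, whose ``proof'' only names the result a standard Moser/Kato--Ponce type estimate. You use the classical Leibniz rule at integer orders $k\le m$ and then the complex interpolation identity $[H^{k_0},H^{k_1}]_\theta=H^\beta$ (or, alternatively, the Gagliardo difference-quotient seminorm for the fractional part). This argument is elementary and self-contained modulo that identity. It also matches the hypothesis exactly: $m\in\mathbb N$ with $m>\beta$ means $m\ge\lceil\beta\rceil$, which is precisely what both interpolation endpoints require. For integer $\beta$, your Step~1 alone would even work with $m=\beta$.

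A Kato--Ponce fractional Leibniz argument would instead give a tame bound of the form $\|fg\|_{H^\beta}\lesssim\|f\|_{L^\infty}\|g\|_{H^\beta}+\|(1-\Delta)^{\beta/2}f\|_{L^\infty}\|g\|_{L^2}$. Its advantage is that it tolerates fractional (H\"older--Zygmund type) regularity of $f$. The cost is Littlewood--Paley/paraproduct machinery with an $L^\infty$ norm on $(1-\Delta)^{\beta/2}f$, and one still has to bound that quantity by $\|f\|_{W^{m,\infty}}$ using $m>\beta$.

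The lemma is only used to bound $\|V_r\cdot\nabla\Phi_r^N\|_{H^\beta}$ in Lemma~\ref{lem:energy-ineq}, so your interpolation argument is the more economical justification. Your Step~3, with the observation $\|\partial_l\Phi\|_{H^\beta}\le\|\Phi\|_{H^{\beta+1}}$, also supplies the consequence for $v\cdot\nabla\Phi$ that the paper leaves implicit.
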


\begin{proof}
	This is a standard Moser/Kato--Ponce type estimate: multiplication by a $C^m_b$ function
	is a bounded operator on $H^\beta$ when $m>\beta$.  
\end{proof}

For $\rho\in\mathcal P(\R^d)$, we write $\Kop{\rho}{x}:=\int_{\R^d}K(x,y)\,\rho(\ud y)$.
For $i\ge2$ we use the predecessor empirical measure
$\mu_t^{i-1}:=\frac1{i-1}\sum_{j<i}\delta_{X_t^j}$, and we write
$\mu_t^N:=\frac1N\sum_{i=1}^N\delta_{X_t^i}$ for the global empirical measure.

On path space we denote
\[
\bar P_{[0,t]}:=\Law(\bar X_{[0,t]})\in\mathcal P(\mathcal C_t),
\qquad
P^{1:i}_{[0,t]}:=\Law((X^1,\dots,X^i)_{[0,t]})\in\mathcal P(\mathcal C_t^i).
\]
We recall the incremental and global relative entropies
\[
R_i(t):=\Ent\!\left(P^{1:i}_{[0,t]}\,\middle|\,P^{1:i-1}_{[0,t]}\otimes \bar P_{[0,t]}\right),
\qquad
S_N(t):=\Ent\!\left(P^{1:N}_{[0,t]}\,\middle|\,\bar P_{[0,t]}^{\otimes N}\right).
\]
Note that $\mathcal C_0\simeq \R^d$. In particular, at $t=0$ the quantities $R_i(0)$ and $S_N(0)$ are
\emph{initial} (non-path) relative entropies:
\[
R_i(0)=\Ent\!\left(\Law(X_0^{1:i})\,\middle|\,\Law(X_0^{1:i-1})\otimes \bar\rho_0\right),
\qquad
S_N(0)=\Ent\!\left(\Law(X_0^{1:N})\,\middle|\,\bar\rho_0^{\otimes N}\right),
\]
where $\bar\rho_0:=\Law(\bar X_0)$.

For each $t\in[0,T]$ and each $i\ge2$, we denote by
\[
\mathcal G_t^{i-1}:=\sigma(X_t^1,\dots,X_t^{i-1}),
\qquad
\nu_t^i:=\Law(X_t^i\mid \mathcal G_t^{i-1})
\]
a regular conditional distribution (a probability kernel on $\R^d$ given $X_t^{1:i-1}$).
A jointly measurable choice in $(t,\omega)$ (or in $(t,x^{1:i-1})$) will be constructed in
Subsection~\ref{subsec:replacement}, more precisely in Lemma~\ref{lem:time-disintegration}, by disintegrating the
product measure $dt\otimes \Law(X_t^{1:i})$.

For \(i=1\), we use the convention
\[
\mathcal G_t^0:=\{\varnothing,\Omega\},
\qquad
\nu_t^1:=\Law(X_t^1).
\]
This convention is used whenever sums over \(j<i\) include the index \(j=1\).

 \medskip

We collect the Csisz\'ar--Kullback--Pinsker inequality, data processing inequality and the chain rule of relative entropy as follows. 

\begin{lemma}\label{lem:entropy-tools}
	Let $X,Y$ be Polish spaces and let $\mu,\nu\in\mathcal P(X)$. Let $T:X\to Y$ be a measurable map. Then the following properties hold.
	\begin{enumerate}
			\item[(i)] \textbf{(Csisz\'ar--Kullback--Pinsker).} \(\|\mu-\nu\|_{\TV}\le \sqrt{2\,\Ent(\mu\mid\nu)}\).
			\item[(ii)] \textbf{(Data processing inequality).} \(\Ent(T_\#\mu\mid T_\#\nu)\le \Ent(\mu\mid\nu)\).

		\item[(iii)] \textbf{(Chain rule / disintegration).}
		Let $E,F$ be Polish spaces. Let $\mu\in\mathcal P(E\times F)$ and $\pi_E \in\mathcal P(E)$,  $\pi_F \in\mathcal P(F)$ and denote by $\mu_E$ the $E$-marginal of $\mu$.
		Disintegrate $\mu$ with respect to $\mu_E$ as
		\[
		\mu(\ud x, \ud y)=\mu(\ud  y\mid x)\,\mu_E(\ud  x).
		\]
		Then
		\[
		\Ent(\mu\mid \pi_E \otimes \pi_F)
		= \Ent(\mu_E\mid \pi_E ) + \int_E \Ent\!\left(\mu(\cdot\mid x)\mid \pi_F \right)\,\mu_E(\ud x ).
		\]
	\end{enumerate}
\end{lemma}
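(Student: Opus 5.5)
The three items in Lemma~\ref{lem:entropy-tools} are classical, and I would prove each from the Donsker--Varadhan variational representation
\[
\Ent(\mu\mid\nu)=\sup_{g\ \text{bounded measurable}}\Big\{\int g\,\ud\mu-\log\int e^{g}\,\ud\nu\Big\}
\]
together with the Radon--Nikodym definition. I take this formula as known; it follows from Jensen's inequality applied to $\ud\mu/\ud\nu$ together with a truncation of $\log(\ud\mu/\ud\nu)$.

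For (ii), I would use the variational formula directly. Any bounded measurable $g$ on $Y$ gives a bounded measurable test function $g\circ T$ on $X$, and $\int g\,\ud T_\#\mu=\int g\circ T\,\ud\mu$, with the same identity for the exponential moment under $\nu$. Hence the supremum defining $\Ent(T_\#\mu\mid T_\#\nu)$ runs over a subclass of the test functions for $\Ent(\mu\mid\nu)$, which gives the inequality. This also covers the case where the right-hand side is $+\infty$.

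For (i), I would reduce to two-point spaces via (ii). Let $A=\{\ud\mu/\ud\lambda>\ud\nu/\ud\lambda\}$ for a dominating measure $\lambda$, and set $T=\mathbf 1_A$. Then $\|\mu-\nu\|_{\TV}=2(\mu(A)-\nu(A))$ under the normalization in which the stated constant $\sqrt{2\Ent}$ is the sharp Pinsker constant. Data processing reduces the claim to Bernoulli laws with parameters $p=\mu(A)$ and $q=\nu(A)$, where one must show
\[
p\log\frac pq+(1-p)\log\frac{1-p}{1-q}\ge 2(p-q)^2 .
\]
I would verify this elementarily: fix $p$, and differentiate the difference in $q$. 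The derivative is $(q-p)\big(\tfrac{1}{q(1-q)}-4\big)$, which is $\le0$ for $q\le p$ and $\ge0$ for $q\ge p$, because $q(1-q)\le 1/4$. The difference vanishes at $q=p$, so it is nonnegative everywhere. If the paper's TV normalization is instead $\sup_A|\mu(A)-\nu(A)|$, the same argument gives the stronger bound $\sqrt{\Ent/2}$, so (i) holds in either convention.

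For (iii), I would first treat the case $\mu\ll\pi_E\otimes\pi_F$. Then $\mu_E\ll\pi_E$, and for $\mu_E$-a.e.\ $x$ the conditional law satisfies $\mu(\cdot\mid x)\ll\pi_F$. The density factorizes as
\[
\frac{\ud\mu}{\ud(\pi_E\otimes\pi_F)}(x,y)=\frac{\ud\mu_E}{\ud\pi_E}(x)\,\frac{\ud\mu(\cdot\mid x)}{\ud\pi_F}(y).
\]
To justify this, I would check that the right-hand side integrates correctly against rectangles, and obtain a jointly measurable version of the conditional density by the standard disintegration on Polish spaces. Taking logarithms and integrating against $\mu=\mu(\ud y\mid x)\mu_E(\ud x)$ then gives the identity.

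The main technical obstacle is making this integration legitimate when the terms are not integrable. The negative parts of $\log$ densities are controlled because $\int(\log f)_-\,f\,\ud\pi\le e^{-1}$, so all quantities are well defined in $[0,\infty]$ and Tonelli applies to the positive and negative parts separately. In the non-absolutely-continuous case I would show that both sides are $+\infty$. If $\mu\not\ll\pi_E\otimes\pi_F$, then either $\mu_E\not\ll\pi_E$, or the set of $x$ with $\mu(\cdot\mid x)\not\ll\pi_F$ has positive $\mu_E$-measure; otherwise the factorized density above would exist. Conversely, if the right-hand side is infinite, then so is the left, by applying (ii) to the projection onto $E$ and by the same factorization. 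This completes (iii).
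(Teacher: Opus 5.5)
Your proposal is correct, and for (iii) it follows the paper's route: you factorize $\frac{\ud\mu}{\ud(\pi_E\otimes\pi_F)}$ as $\frac{\ud\mu_E}{\ud\pi_E}\cdot\frac{\ud\mu(\cdot\mid x)}{\ud\pi_F}$ and integrate the logarithm against $\mu$. For (i)--(ii), which the paper only calls classical, you supply the standard proofs (Donsker--Varadhan for data processing, and a two-point reduction with the binary Pinsker inequality); your control of the negative parts and your argument for the non-absolutely-continuous case in (iii) also fill in what the paper dismisses as ``both sides are $+\infty$ by convention''.
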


\begin{proof}
	Items (i)--(ii) are classical.
	For (iii), whenever $\mu\ll \pi_E \otimes \pi_F$, we have
	\[
	\frac{\ud \mu}{\ud (\pi_E \otimes\pi_F)}(x,y)
	= \frac{\ud \mu_E}{\ud \pi_E}(x)\cdot \frac{ \ud \mu(\cdot\mid x)}{\ud \pi_F}(y),
	\qquad (\pi_E \otimes \pi_F)\text{-a.e.},
	\]
	and the identity follows by integrating $\log(\cdot)$ with respect to $\mu$; otherwise both sides are $+\infty$ by convention.
\end{proof}

\smallskip

\begin{corollary}\label{cor:incremental}
	For every $t\in[0,T]$, the following statements hold true. 
	\begin{enumerate}
		\item[(i)] For every \(N\ge1\), \(S_N(t)=\sum_{i=1}^N R_i(t)\).

		\item[(ii)] For  \(m = 1, 2, \dots,N\), \(\Ent\!\left(P^{1:N}_{[0,t]}\,\middle|\,P^{1:N-m}_{[0,t]}\otimes \bar P_{[0,t]}^{\otimes m}\right)=\sum_{j=0}^{m-1} R_{N-j}(t)\).

		\item[(iii)]
		Let $e_t:\mathcal C_t\to\R^d$ be the evaluation map $e_t(\gamma)=\gamma_t$.
		Then for each $i\ge1$,
		\[
		\Ent\!\left(\Law(X_t^i)\,\middle|\,\bar\rho_t\right)\le R_i(t),
		\qquad
		\bar\rho_t:=(e_t)_\#\bar P_{[0,t]}.
		\]
	\end{enumerate}
\end{corollary}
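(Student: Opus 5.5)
All three items follow from the chain rule of Lemma~\ref{lem:entropy-tools}(iii), combined with the data processing inequality (ii) for item (iii).

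\emph{Item (i).} We apply the chain rule with $E=\mathcal C_t^{i-1}$, $F=\mathcal C_t$, $\mu=P^{1:i}_{[0,t]}$, $\pi_E=\bar P_{[0,t]}^{\otimes(i-1)}$ and $\pi_F=\bar P_{[0,t]}$. This gives
\[
S_i(t)=S_{i-1}(t)+\int \Ent\!\left(P^{i\mid 1:i-1}_{[0,t]}(\cdot\mid x)\,\middle|\,\bar P_{[0,t]}\right)P^{1:i-1}_{[0,t]}(\ud x),
\]
where $P^{i\mid 1:i-1}_{[0,t]}$ is the disintegration of $P^{1:i}_{[0,t]}$ with respect to its first $i-1$ coordinates. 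Applying the same chain rule with $\pi_E=P^{1:i-1}_{[0,t]}$ instead makes the first term vanish, because $\Ent(\mu_E\mid\mu_E)=0$. This identifies the integral term with $R_i(t)$. Since the disintegration depends only on $\mu$ and not on the reference measure, both applications use the same conditional kernel, so the two integrals coincide. Hence $S_i(t)=S_{i-1}(t)+R_i(t)$. Induction from $S_1(t)=R_1(t)$ then gives $S_N(t)=\sum_{i=1}^N R_i(t)$. The identity also holds when some terms are infinite, by the $+\infty$ convention in Lemma~\ref{lem:entropy-tools}(iii).

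\emph{Item (ii).} This is an induction on $m$ of the same type. Write $Q_m:=P^{1:N-m}_{[0,t]}\otimes\bar P_{[0,t]}^{\otimes m}$ and reorder the factors so that the coordinates $X^{N-m+1},\dots,X^{N-1}$ come first and $X^N$ comes last. Apply the chain rule with $E$ the first $N-1$ coordinates, $\pi_E=P^{1:N-m}_{[0,t]}\otimes\bar P^{\otimes(m-1)}_{[0,t]}$ and $\pi_F=\bar P_{[0,t]}$. The marginal term is $\Ent\bigl(P^{1:N-1}_{[0,t]}\mid P^{1:N-m}_{[0,t]}\otimes\bar P^{\otimes(m-1)}_{[0,t]}\bigr)$, which equals $\sum_{j=1}^{m-1}R_{N-j}(t)$ by the induction hypothesis applied with $N-1$ in place of $N$. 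The conditional term is $R_N(t)$, by the identification made in (i). The base case $m=1$ is just the definition of $R_N(t)$.

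\emph{Item (iii).} Combine the chain rule with data processing. The $i$-th marginal of $P^{1:i}_{[0,t]}$ is $P^i_{[0,t]}$, and the $i$-th marginal of $P^{1:i-1}_{[0,t]}\otimes\bar P_{[0,t]}$ is $\bar P_{[0,t]}$. Data processing under the projection onto the $i$-th coordinate therefore gives $\Ent(P^i_{[0,t]}\mid\bar P_{[0,t]})\le R_i(t)$. (One can also see this from convexity: the $i$-th marginal is the $P^{1:i-1}_{[0,t]}$-average of the conditional kernels, and $\Ent(\cdot\mid\bar P_{[0,t]})$ is convex.) Pushing forward once more by $e_t$ and using data processing again yields $\Ent(\Law(X^i_t)\mid\bar\rho_t)\le R_i(t)$.

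The only point requiring care is the identification of the conditional term in (i), which depends on the disintegration kernel being independent of the reference measure. This is immediate from Lemma~\ref{lem:entropy-tools}(iii), so I expect no real obstacle.
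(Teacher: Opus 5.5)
Your proposal is correct and follows the paper's proof essentially verbatim. Item (i) is the same double application of the chain rule, first with $\pi_E=\bar P_{[0,t]}^{\otimes(i-1)}$ and then with $\pi_E=P^{1:i-1}_{[0,t]}$, and your induction in (ii), which formally invokes the hypothesis at $(N-1,m-1)$, is the paper's telescoping of $E_\ell(t)=\Ent\bigl(P^{1:\ell}_{[0,t]}\mid P^{1:N-m}_{[0,t]}\otimes\bar P_{[0,t]}^{\otimes(\ell-N+m)}\bigr)$ over $\ell$; in (iii) the paper applies data processing once to the composite map $(\gamma^1,\dots,\gamma^i)\mapsto\gamma^i_t$ instead of in your two steps.
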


\begin{proof}
	(i) We apply Lemma~\ref{lem:entropy-tools}(iii) with $E=\mathcal C_t^{i-1}$, $F=\mathcal C_t$, 
	$\mu=P^{1:i}_{[0,t]}$, $\pi_E=\bar P_{[0,t]}^{\otimes (i-1)}$, and $\pi_F=\bar P_{[0,t]}$.
	Since the $E$-marginal of $P^{1:i}_{[0,t]}$ is $P^{1:i-1}_{[0,t]}$, we obtain
	\[
	\Ent\!\left(P^{1:i}_{[0,t]}\,\middle|\,\bar P_{[0,t]}^{\otimes i}\right)
	= \Ent\!\left(P^{1:i-1}_{[0,t]}\,\middle|\,\bar P_{[0,t]}^{\otimes (i-1)}\right)
	+ \int_E  \Ent\! \left(P^{1:i}_{[0,t]}(\cdot \vert x)\,\middle|\,\bar P_{[0,t]}\right) \ud P^{1:i-1}_{[0, t]}(x). 
	\]
	Similarly,   applying  Lemma~\ref{lem:entropy-tools}(iii) again but now for the   incremental  relative entropy $R_i(t)$, one has that
	\begin{equation}\label{eq:R_i_conditional}
	\begin{split}
		R_i(t)  & = \Ent\!\left(P^{1:i}_{[0,t]}\,\middle|\, P^{1:i-1}_{[0,t]}\otimes  \bar P_{[0,t]}\right)
		= \Ent\!\left(P^{1:i-1}_{[0,t]}\,\middle|\, P^{1:i-1}_{[0,t]} \right)
		+ \int_E  \Ent\! \left(P^{1:i}_{[0,t]}(\cdot \vert x)\,\middle|\,\bar P_{[0,t]}\right) \ud P^{1:i-1}_{[0, t]}(x) \\
		& = \int_E  \Ent\! \left(P^{1:i}_{[0,t]}(\cdot \vert x)\,\middle|\,\bar P_{[0,t]}\right) \ud P^{1:i-1}_{[0, t]}(x). \\
	\end{split} 
	\end{equation}
	Consequently, one obtains that 
	\begin{equation}\label{eq:RitRepre}
		R_i(t) = \Ent\!\left(P^{1:i}_{[0,t]}\,\middle|\,\bar P_{[0,t]}^{\otimes i}\right)
		-  \Ent\!\left(P^{1:i-1}_{[0,t]}\,\middle|\,\bar P_{[0,t]}^{\otimes (i-1)}\right). 
	\end{equation}
	By a telescoping sum, we obtain
	\[
	S_N(t)
	= \Ent\!\left(P^{1:N}_{[0,t]}\,\middle|\,\bar P_{[0,t]}^{\otimes N}\right)
	= \sum_{i=2}^N \Bigg(
	\Ent\!\left(P^{1:i}_{[0,t]}\,\middle|\,\bar P_{[0,t]}^{\otimes i}\right)
	-\Ent\!\left(P^{1:i-1}_{[0,t]}\,\middle|\,\bar P_{[0,t]}^{\otimes (i-1)}\right)
	\Bigg)
	+\Ent\!\left(P^{1}_{[0,t]}\,\middle|\,\bar P_{[0,t]}\right).
	\]
	In particular, with the equation \eqref{eq:RitRepre} and the definition 
	\[
	R_1(t):=\Ent\!\left(P^{1}_{[0,t]}\,\middle|\,\bar P_{[0,t]}\right),\qquad
	\]
	we have \(S_N(t)=\sum_{i=1}^N R_i(t)\).

	(ii)   Fix \(m\in\{1,\ldots,N\}\). For \(\ell=N-m,\ldots,N\), define
	\[
	E_\ell(t)
	:=
	\Ent\!\left(
	P^{1:\ell}_{[0,t]}
	\,\middle|\,
	P^{1:N-m}_{[0,t]}\otimes \bar P_{[0,t]}^{\otimes(\ell-N+m)}
	\right),
	\] 
	 with the convention \(E_{N-m}(t)=0\). Applying the chain rule of Lemma~\ref{lem:entropy-tools}(iii) to the last coordinate of \(P^{1:\ell}_{[0,t]}\) gives, for every \(\ell=N-m+1,\ldots,N\),
	\[
	E_\ell(t)-E_{\ell-1}(t)
	=
	\int
	\Ent\!\left(
	P^{1:\ell}_{[0,t]}(\mathrm d\gamma^\ell\mid \gamma^{1:\ell-1})
	\,\middle|\,
	\bar P_{[0,t]}
	\right)
	P^{1:\ell-1}_{[0,t]}(\mathrm d\gamma^{1:\ell-1})
	=
	R_\ell(t),
	\]
	 where   the last equality is exactly the disintegrated form of \(R_\ell(t)\), as in \eqref{eq:R_i_conditional}. Summing this identity over \(\ell=N-m+1,\ldots,N\) yields
	\[
	\Ent\!\left(
	P^{1:N}_{[0,t]}
	\,\middle|\,
	P^{1:N-m}_{[0,t]}\otimes \bar P_{[0,t]}^{\otimes m}
	\right)
	=
	\sum_{\ell=N-m+1}^{N} R_\ell(t)
	=
	\sum_{j=0}^{m-1}R_{N-j}(t).
	\]

	(iii) Consider the measurable map $T:\mathcal C_t^i\to\R^d$ defined by $T(\gamma^1,\dots,\gamma^i)=\gamma_t^i$.
	Then $T_\# P^{1:i}_{[0,t]}=\Law(X_t^i)$ and $T_\#(P^{1:i-1}_{[0,t]}\otimes \bar P_{[0,t]})=\bar\rho_t$.
	By the data processing inequality in Lemma~\ref{lem:entropy-tools}(ii), one has $\Ent(\Law(X_t^i)\mid \bar\rho_t)\le R_i(t)$.
\end{proof}

\medskip

We now  state a version of a classical relative entropy estimate for diffusions which follows from Girsanov’s theorem. 

\begin{lemma}\label{lem:girsanov-entropy}
	Let $k\in\mathbb N$ and let $\sigma\in\mathbb R^{k\times k}$ be invertible.
	Let $b^1,b^2:[0,T]\times \mathcal C_T^k\to\mathbb R^k$ be progressively measurable and bounded.
	Let $P^i\in\mathcal P(\mathcal C_T^k)$ denote the path law of the SDE
	\[
	dZ^i_t=b^i(t,Z^i)\, \ud t+\sigma \ud  W_t,\qquad t\in[0,T],
	\]
	with initial laws $P^i_0$, where $i=1, 2$. 
	Assume $\Ent(P^1_0\mid P^2_0)<\infty$.
	Then for every $t\in[0,T]$,
	\[
	\Ent\!\left(P^1_{[0,t]}\,\middle|\,P^2_{[0,t]}\right)
	= \Ent(P^1_0\mid P^2_0)
	+\frac12\,\E_{P^1}\!\int_0^t
	\big|\sigma^{-1}\big(b^1(s,Z^1)-b^2(s,Z^1)\big)\big|^2\ud s.
	\]
\end{lemma}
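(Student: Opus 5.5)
The plan is to prove the identity first under a product-measurable structure, by Girsanov's theorem on path space, and then to integrate against the initial law. I first condition on the initial point. Write $P^i_{[0,t]}(\cdot\mid z_0)$ for the law of $Z^i$ on $\mathcal C_t^k$ started at $z_0$. Because $b^1,b^2$ are bounded and progressively measurable and $\sigma$ is invertible, weak existence and uniqueness in law hold for each deterministic $z_0$, by Girsanov applied to $z_0+\sigma W$. A measurable version of $z_0\mapsto P^i(\cdot\mid z_0)$ is then available, and $P^i=\int P^i(\cdot\mid z_0)\,P^i_0(\ud z_0)$. Applying the chain rule of Lemma~\ref{lem:entropy-tools}(iii), with $E=\R^k$ carrying the initial value and $F$ the path, gives
\[
\Ent\big(P^1_{[0,t]}\mid P^2_{[0,t]}\big)=\Ent(P^1_0\mid P^2_0)+\int \Ent\big(P^1_{[0,t]}(\cdot\mid z_0)\mid P^2_{[0,t]}(\cdot\mid z_0)\big)\,P^1_0(\ud z_0).
\]
Strictly, this is the version of the chain rule in which both measures are disintegrated along the same coordinate. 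It is proved exactly like Lemma~\ref{lem:entropy-tools}(iii), since the density factorizes as $\frac{\ud P^1_0}{\ud P^2_0}(z_0)\cdot\frac{\ud P^1(\cdot\mid z_0)}{\ud P^2(\cdot\mid z_0)}$.

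It then suffices to treat a fixed deterministic initial point. Let $Z$ be the canonical process on $\mathcal C_t^k$ under $Q:=P^1(\cdot\mid z_0)$. Then $W^1:=\sigma^{-1}\big(Z_s-z_0-\int_0^s b^1(r,Z)\ud r\big)$ is a $Q$-Brownian motion. Set $h_s:=\sigma^{-1}\big(b^2(s,Z)-b^1(s,Z)\big)$, which is bounded and progressive. By Novikov, $\mathcal E_t:=\exp\big(\int_0^t h_s\cdot\ud W^1_s-\tfrac12\int_0^t|h_s|^2\ud s\big)$ is a true $Q$-martingale. Under $\tilde Q:=\mathcal E_t\,Q$, the process $Z$ solves the SDE with drift $b^2$ started at $z_0$. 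Uniqueness in law then gives $\tilde Q=P^2_{[0,t]}(\cdot\mid z_0)$. Hence $P^1(\cdot\mid z_0)\sim P^2(\cdot\mid z_0)$ on $\mathcal C_t^k$, and
\[
\log\frac{\ud P^1(\cdot\mid z_0)}{\ud P^2(\cdot\mid z_0)}=-\int_0^t h_s\cdot\ud W^1_s+\tfrac12\int_0^t|h_s|^2\ud s .
\]
Taking the $Q$-expectation kills the stochastic integral, since $h$ is bounded and the integral is a square-integrable martingale. This leaves $\tfrac12\E_Q\int_0^t|\sigma^{-1}(b^1-b^2)(s,Z)|^2\ud s$.

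Integrating this conditional identity in $z_0$ against $P^1_0$, using Fubini together with the fact that $P^1=\int P^1(\cdot\mid z_0)P^1_0(\ud z_0)$, yields the claimed formula with $\E_{P^1}$. The same argument also handles $\Ent(P^1_0\mid P^2_0)=+\infty$ consistently, but that case is excluded by assumption anyway.

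The main obstacle is the measurability and uniqueness bookkeeping rather than the computation. Three points need care: the conditional laws must be chosen measurably in $z_0$, uniqueness in law must hold for bounded path-dependent drifts (which is exactly what Girsanov provides), and the restriction to $[0,t]$ must be compatible. For the last point, $b^i(s,\cdot)$ depends only on the path up to time $s$, so the same density argument works on $\mathcal C_t$ directly.
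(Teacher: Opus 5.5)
Your proposal is correct and is essentially the standard argument the paper relies on: the paper gives no proof of its own, saying only that the lemma is a standard application of Girsanov's theorem and citing Lacker's Lemma~4.4. Your reduction to deterministic initial points via the chain rule, followed by the Novikov/Girsanov computation of the log-density and taking its expectation, is exactly that standard argument, with the measurability and uniqueness-in-law bookkeeping correctly identified.
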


This lemma follows from a standard application of Girsanov's theorem: under a bounded drift and a non-degenerate constant diffusion matrix, the above path measures are well defined, and the stated identity holds; see, for instance, \cite[Lemma~4.4]{lacker2021hierarchies}.

\section{Proof of the incremental relative entropy bound}\label{sec:incremental-proof}

This section proves Theorem~\ref{thm:incremental}.
The key quantity is the incremental relative entropy
\[
R_i(t)=\Ent\!\left(P^{1:i}_{[0,t]}\,\middle|\,P^{1:i-1}_{[0,t]}\otimes \bar P_{[0,t]}\right),\qquad i\ge2,
\]
which measures how much the $i$-th path deviates from an independent McKean--Vlasov copy when conditioned on the first $(i-1)$ paths.
The proof consists of  three steps.
First, we derive an explicit Girsanov/relative-entropy identity expressing $R_i(t)$ through the quadratic energy of the drift mismatch.
Second, we replace the predecessor empirical measure by a conditional average measure, which separates a martingale-difference term
from a predictable term.
Third, we close the resulting estimate by an upper-envelope argument and Gronwall's inequality.

\medskip
\noindent
Throughout this article, $\|K\|_\infty:=\|K\|_{L^\infty(\R^d\times\R^d)}$ and $\|\sigma^{-1}\|$ denotes the operator norm.
For convenience, set
\begin{equation}\label{eq:kappa-def}
	\kappa:=\|\sigma^{-1}\|^2\,\|K\|_\infty^2 .
\end{equation}

\subsection[Quadratic identity for Ri(t)]{Quadratic identity for $R_i(t)$}\label{subsec:quad-Ri}

The measures $P^{1:i}_{[0,t]}$ and $P^{1:i-1}_{[0,t]}\otimes \bar P_{[0,t]}$ coincide on the first $(i-1)$ coordinates. 
Indeed, in the sequential system the dynamics of $(X^1,\dots,X^{i-1})$ do not involve particle $i$. Hence the two laws differ only through the $i$-th coordinate, namely through its initial distribution and its drift.
This allows a direct application of Girsanov's theorem and yields a quadratic representation of $R_i(t)$ as follows. 

\begin{lemma}  \label{lem:Ri-girsanov}

For each \(i\ge2\) and each \(t\in[0,T]\),
\begin{equation}\label{eq:Ri-integral}
R_i(t)=R_i(0)+\frac12\,\E\int_0^t
\big|\sigma^{-1}\Delta_s^i\big|^2\,\mathrm ds,
\end{equation} 
where
  \[
\Delta_s^i:=\Kop{\mu_s^{i-1}}{X_s^i}-\Kop{\bar\rho_s}{X_s^i},
\qquad
\mu_s^{i-1}:=\frac1{i-1}\sum_{j<i}\delta_{X_s^j}.
\] 
  For \(i=1\), the same argument gives
\[
R_1(t)=R_1(0)+\frac12\,\E\int_0^t
\big|\sigma^{-1}\Kop{\bar\rho_s}{X_s^1}\big|^2\,\mathrm ds.
\]

 \end{lemma}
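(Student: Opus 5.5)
We apply Lemma~\ref{lem:girsanov-entropy} on the enlarged path space $\mathcal C_T^{i}$ (so $k=id$), with block-diagonal diffusion matrix $\Sigma:=\mathrm{diag}(\sigma,\dots,\sigma)$, which is invertible. Both $P^{1:i}_{[0,T]}$ and $P^{1:i-1}_{[0,T]}\otimes \bar P_{[0,T]}$ will be realized as path laws of SDEs of the form $dZ_t=b^\ell(t,Z)\,dt+\Sigma\,dW_t$ driven by the same $id$-dimensional Brownian motion.

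For the first law, take $Z=(X^1,\dots,X^i)$ and the progressively measurable drift
\[
b^1(t,\gamma)=\Big(b(t,\gamma^1_t),\;\big(b(t,\gamma^j_t)+\Kop{\mu^{j-1}_t(\gamma)}{\gamma^j_t}\big)_{2\le j\le i}\Big),
\qquad \mu^{j-1}_t(\gamma):=\tfrac1{j-1}\textstyle\sum_{k<j}\delta_{\gamma^k_t}.
\]
This drift is bounded by $\|b\|_\infty+\|K\|_\infty$. For the second law, take the same first $i-1$ drift components and replace the last one by $b(t,\gamma^i_t)+\Kop{\bar\rho_t}{\gamma^i_t}$, which is a deterministic, bounded, time-dependent function of the current state. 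The initial laws are $\Law(X_0^{1:i})$ and $\Law(X_0^{1:i-1})\otimes\bar\rho_0$, respectively.

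The step requiring care is identifying the law of the second SDE as the product $P^{1:i-1}_{[0,T]}\otimes\bar P_{[0,T]}$. By the lower-triangular structure, the first $i-1$ components solve the closed sequential system for particles $1,\dots,i-1$, driven by $W^{1:i-1}$ with initial law $\Law(X_0^{1:i-1})$. By weak uniqueness for bounded measurable drift with nondegenerate constant diffusion (applied inductively, as in the well-posedness discussion), their path law is $P^{1:i-1}_{[0,T]}$.

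The last component solves the linear SDE $dY=V_t(Y)\,dt+\sigma\,dW^i$ with $V_t=b(t,\cdot)+\Kop{\bar\rho_t}{\cdot}$, starting from a variable with law $\bar\rho_0$ that is independent of the first block and of $W^{1:i-1}$. This SDE has bounded drift, so it is weakly well posed. Since $\bar X$ solves it, its path law is $\bar P_{[0,T]}$. Because $W^i$ and the initial data are independent of the first block, the joint law is the product. If one prefers, this can be made rigorous by constructing the two blocks on a product probability space and invoking uniqueness in law for the full $id$-dimensional system, which again has bounded drift.

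Now Lemma~\ref{lem:girsanov-entropy} applies on $[0,t]$, provided $\Ent$ of the initial laws is finite; this quantity is exactly $R_i(0)$, and if $R_i(0)=\infty$ both sides are infinite. The drift difference vanishes in the first $i-1$ blocks. In the last block, evaluated along $Z^1=X^{1:i}$, it equals
\[
\Kop{\mu^{i-1}_s}{X_s^i}-\Kop{\bar\rho_s}{X^i_s}=\Delta_s^i,
\]
because the $b$-terms cancel. Since $\Sigma^{-1}$ is block-diagonal, we get $|\Sigma^{-1}(b^1-b^2)|^2=|\sigma^{-1}\Delta^i_s|^2$, which gives \eqref{eq:Ri-integral}.

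For $i=1$, the same argument applies with $k=d$, drifts $b(t,\gamma_t)$ and $b(t,\gamma_t)+\Kop{\bar\rho_t}{\gamma_t}$, and initial laws $\Law(X^1_0)$ and $\bar\rho_0$. The drift difference is $-\Kop{\bar\rho_s}{X_s^1}$, whose squared norm yields the stated formula.
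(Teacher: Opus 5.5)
Your proposal is correct and follows the paper's argument. Like the paper, you realize both $P^{1:i}_{[0,t]}$ and $P^{1:i-1}_{[0,t]}\otimes\bar P_{[0,t]}$ as path laws of $id$-dimensional SDEs with block-diagonal diffusion $\mathrm{diag}(\sigma,\dots,\sigma)$ and drifts that agree in the first $i-1$ blocks, then apply the Girsanov--entropy lemma; you add detail the paper leaves implicit, namely identifying the reference measure via uniqueness in law for the full bounded-drift system, and handling $R_i(0)=\infty$, where the left side is also infinite by data processing under the time-zero projection.
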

\begin{remark}
	We emphasize that the ``initial'' quantities $R_i(0)$ and $S_N(0)$ are \emph{not} path-space relative entropies.
	They only depend on the joint law of the initial configuration.
	Indeed, by definition of $R_i(t)$ as a relative entropy on $C([0,t];\R^d)^i$ and by restricting it to time $0$, one has 
	\begin{equation}\label{eq:Ri0-initial}
		R_i(0)
		=\Ent\!\left(\Law(X^{1:i}_0)\,\middle|\,
		\Law(X^{1:i-1}_0)\otimes \bar\rho_0\right),
		\qquad i\ge2.
	\end{equation}
	Similarly, the global relative entropy at time $0$ reduces to the relative entropy of the initial laws:
	\begin{equation}\label{eq:SN0-initial}
		S_N(0)
		=\Ent\!\left(\Law(X^{1:N}_0)\,\middle|\,\bar\rho_0^{\otimes N}\right).
	\end{equation}
	In particular, if $(X_0^i)_{i\ge1}$ are i.i.d.\ with law $\bar\rho_0$, then $R_i(0)=0$ for all $i\ge2$
	and $S_N(0)=0$ for all $N$. In this case, we have the trivial estimate for $R_1(t)$ as 
	\begin{equation}\label{eq:R1-explicit}
		R_1(t)
		= R_1(0)+\frac12\,\E\int_0^t \Big|\sigma^{-1}\Kop{\bar\rho_s}{X_s^1}\Big|^2\ud s
		\le \frac12\,\|\sigma^{-1}\|^2\,\|K\|_\infty^2\,t
		=\frac12\,\kappa\,t .
	\end{equation}

\end{remark}

Even though the proof of the lemma is very straightforward, for   completeness  we give a detailed proof below.

  \begin{proof}[Proof of Lemma \ref{lem:Ri-girsanov}] 
	Fix any integer $i\ge2$  and $t\in[0,T]$, and set
	$Q^{1:i}_{[0,t]}:=P^{1:i-1}_{[0,t]}\otimes \bar P_{[0,t]}$ on $\mathcal C_t^i$.
	By definition, the \(i\)-th incremental relative entropy reads 
	\[
	R_i(t)=\Ent\!\left(P^{1:i}_{[0,t]}\,\middle|\,Q^{1:i}_{[0,t]}\right).
	\]

	Let $Z=(Z^1,\dots,Z^i)$ be the canonical coordinate process on $\mathcal C_t^i$.
	Under the law  $P^{1:i}_{[0,t]}$, the process $Z$ solves the sequential system up to time $t$; under the reference law $Q^{1:i}_{[0,t]}$, 
	the first $(i-1)$ coordinates share the same law, while the $i$-th coordinate  $Z^i$ is an independent McKean--Vlasov copy.
	In particular, under both measures,  the diffusion matrix is $\Sigma=\mathrm{diag}(\sigma,\dots,\sigma)$, and the drift fields
	coincide in the first $(i-1)$ coordinates. In the last coordinate, the drift difference equals
	\[
	\Delta_s^i
	=\Kop{\mu_s^{i-1}}{Z_s^i}-\Kop{\bar\rho_s}{Z_s^i},
	\qquad \mbox{with} \, \, \, 
	\mu_s^{i-1}:=\frac1{i-1}\sum_{j<i}\delta_{Z_s^j}.
	\]
	Therefore applying Lemma~\ref{lem:girsanov-entropy} yields \eqref{eq:Ri-integral}, where  the initial  term is exactly $R_i(0)$.

	The case $i=1$ is identical, except that there is no predecessor empirical measure and the drift mismatch reduces to \(-\Kop{\bar\rho_s}{Z_s^1}\), which gives the displayed formula for \(R_1(t)\).

 \end{proof}

\subsection{Replacement of the predecessor empirical measure}\label{subsec:replacement}

Fix $i\ge2$ and $t\in[0,T]$. Recall the drift mismatch
\[
\Delta_t^i := \Kop{\mu_t^{i-1}}{X_t^i}-\Kop{\bar\rho_t}{X_t^i},
\qquad \mu_t^{i-1}:=\frac1{i-1}\sum_{j<i}\delta_{X_t^j}.
\]
Again we recall $Q^{1:i}_{[0,t]}:=P^{1:i-1}_{[0,t]}\otimes \bar P_{[0,t]}$ on $\mathcal C_t^i$. 

The goal of this subsection is to control $\Delta_t^i$ at the random correlated point $X_t^i$. 
To this end, we insert a predictable mean measure $\bar\nu_t^{i-1}$ constructed from conditional laws, later we call it {\em average conditional measure}, 
and decompose $\Delta_t^i$ by the usual add--subtract trick:
\begin{equation}\label{eq:Delta-decomp}
	\begin{aligned}
		\Delta_t^i
		&= \Kop{\mu_t^{i-1}}{X_t^i}-\Kop{\bar\rho_t}{X_t^i} \\
		&= \underbrace{\Big(\Kop{\mu_t^{i-1}}{X_t^i}-\Kop{\bar\nu_t^{i-1}}{X_t^i}\Big)}_{=:A_t^i}
		+\underbrace{\Big(\Kop{\bar\nu_t^{i-1}}{X_t^i}-\Kop{\bar\rho_t}{X_t^i}\Big)}_{=:B_t^i}.
	\end{aligned}
\end{equation}
Here $A_t^i$ captures the discrepancy between the empirical conditional measure and its conditional counterpart,
whereas $B_t^i$ compares the average   conditional  measure to the limit measure.

The term $A_t^i$ constitutes the main new difficulty: it compares an empirical input with its conditional mean, both evaluated at the random (and correlated) point $X_t^i$. We control $A_t^i$ in two steps: (i) we establish a sub-Gaussian estimate under the decoupled law $Q^{1:i}_{[0,t]}$, in which the last coordinate is independent of the preceding ones; and (ii) we transfer this estimate back to the interacting law via the Donsker--Varadhan variational formula for instance as in \cite{jabin2018quantitative}. The remaining term $B_t^i$ is handled using Pinsker's inequality.

\medskip

For each $j\ge2$ and each $t\in[0,T]$, we want to work with the conditional law of $X_t^j$ given the predecessor \emph{time-$t$} configuration
$X_t^{1:j-1}:=(X_t^1,\dots,X_t^{j-1})$. Since we will integrate in time, we need a jointly measurable version in $(t,\omega)$.
This is provided by disintegrating the time-space measure $dt\otimes\Law(X_t^{1:j})$.

\begin{lemma}\label{lem:time-disintegration}
	For each $j\ge2$ there exist probability kernels
	\[
	f_t^j:\ (\R^d)^{j-1}\to\mathcal P(\R^d),
	\qquad (t,x^{1:j-1})\mapsto f_t^j(x^{1:j-1},\cdot),
	\]
	which are jointly Borel measurable in $(t,x^{1:j-1})$ and satisfy
	\begin{equation}\label{eq:time-disintegration}
		\ud t\otimes \Law(X_t^{1:j})(\ud x^{1:j})
		= \ud t\otimes \Law(X_t^{1:j-1})(\ud x^{1:j-1})\, f_t^j(x^{1:j-1}, \ud x_j).
	\end{equation}
	In particular,
	\begin{equation}\label{eq:nu-def-slice}
		\nu_t^j(\omega):=f_t^j(X_t^{1:j-1}(\omega),\cdot)
		\quad\text{satisfies}\quad
		\nu_t^j=\Law(X_t^j\mid X_t^{1:j-1})\ \ \text{in the } \ud t\times\mathbb{P}\text{-a.s.\ sense}.
	\end{equation}
\end{lemma}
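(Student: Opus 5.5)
The idea is to treat time as an extra coordinate and apply the standard disintegration theorem on a Polish space. Fix \(j\ge2\). First we check that \((t,\omega)\mapsto X_t^{1:j}(\omega)\) is jointly measurable on \([0,T]\times\Omega\). This holds because the paths are continuous, so the process is progressively measurable. Consequently \(t\mapsto \Law(X_t^{1:j})(A)\) is Borel for every Borel \(A\subset(\R^d)^j\). We then define the finite measure
\[
\Pi^j(\ud t,\ud x^{1:j}):=\ud t\otimes\Law(X_t^{1:j})(\ud x^{1:j})
\]
on the Polish space \(E\times\R^d\), where \(E:=[0,T]\times(\R^d)^{j-1}\). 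Concretely, \(\Pi^j(F)=\E\int_0^T\mathbf 1_F(t,X_t^{1:j})\ud t\) for Borel sets \(F\). This is well defined by Fubini, and its total mass is \(T\).

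Next we take the marginal of \(\Pi^j\) on \(E\). By construction this marginal is \(\Pi^{j-1}=\ud t\otimes\Law(X_t^{1:j-1})\). The disintegration theorem for finite measures on products of Polish spaces, equivalently the existence of regular conditional distributions, now applies. It yields a Borel probability kernel \(f^j:E\to\mathcal P(\R^d)\), \((t,x^{1:j-1})\mapsto f_t^j(x^{1:j-1},\cdot)\), with \(\Pi^j(\ud t,\ud x^{1:j})=\Pi^{j-1}(\ud t,\ud x^{1:j-1})\,f_t^j(x^{1:j-1},\ud x_j)\). This is exactly \eqref{eq:time-disintegration}. Joint Borel measurability in \((t,x^{1:j-1})\) is part of the conclusion, because the kernel is measurable on \(E\) as a whole.

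For \eqref{eq:nu-def-slice}, we set \(\nu_t^j(\omega):=f_t^j(X_t^{1:j-1}(\omega),\cdot)\). This is jointly measurable in \((t,\omega)\), since it is the composition of a measurable kernel with a jointly measurable map. Now fix bounded Borel functions \(g:[0,T]\to\R\), \(h:(\R^d)^{j-1}\to\R\) and \(\varphi:\R^d\to\R\). Integrating \(g(t)h(x^{1:j-1})\varphi(x_j)\) against both sides of \eqref{eq:time-disintegration} gives
\[
\int_0^T g(t)\,\E\big[h(X_t^{1:j-1})\varphi(X_t^j)\big]\ud t=\int_0^T g(t)\,\E\big[h(X_t^{1:j-1})\langle\nu_t^j,\varphi\rangle\big]\ud t .
\]
Since \(g\) is arbitrary, for each fixed pair \((h,\varphi)\) we get \(\E[h(X_t^{1:j-1})\varphi(X_t^j)]=\E[h(X_t^{1:j-1})\langle\nu_t^j,\varphi\rangle]\) for a.e. \(t\). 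We then let \(h\) and \(\varphi\) range over countable convergence-determining classes, for instance continuous functions with compact support taken from a countable dense set. The union of the exceptional null sets is still null. So for a.e. \(t\), \(\nu_t^j\) is a version of \(\Law(X_t^j\mid X_t^{1:j-1})\), by a monotone class argument. This gives the statement in the \(\ud t\times\mathbb P\)-a.s. sense.

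The only delicate point, and the place where care is needed, is this last passage from the space-time identity to the slice-wise conditional law for a.e. \(t\). The null set of times must not depend on the test functions, and the countable-class argument above handles exactly that.
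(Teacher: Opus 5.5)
Your proposal is correct and follows the same route as the paper. Both apply the disintegration theorem to the space--time measure $\ud t\otimes\Law(X_t^{1:j})$ with respect to the projection onto $(t,x^{1:j-1})$, and then read off the conditional-law interpretation by testing the identity against bounded functions. Your countable-class argument, which makes the exceptional null set of times independent of the test functions, is a useful detail that the paper's one-line proof leaves implicit.
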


\begin{proof}
	It suffices to apply  the disintegration theorem (e.g. \cite[Theorem~5.3.1]{ambrosio2008gradient}) to the finite measure
	$dt\otimes \Law(X_t^{1:j})(dx^{1:j})$ on $[0,T]\times(\R^d)^j$ with respect to the projection
	$(t,x^{1:j})\mapsto (t,x^{1:j-1})$.
	This yields a jointly measurable kernel $f_t^j(x^{1:j-1},dx_j)$ such that \eqref{eq:time-disintegration} holds.
	The conditional-law interpretation \eqref{eq:nu-def-slice} follows by testing \eqref{eq:time-disintegration} against bounded measurable functions.
\end{proof}

 We will use the   conditional measures
\[
\nu_t^1:=\Law(X_t^1),
\qquad
\nu_t^j:=\Law(X_t^j\mid X_t^{1:j-1})
       =\mathbb E(\delta_{X_t^j}\mid X_t^{1:j-1}),\quad j\ge2,
\] 
and the average conditional measures
\[
\bar\nu_t^{i-1}:=\frac{1}{i-1}\sum_{j<i}\nu_t^j.
\]
 
 Note that $(t,\omega)\mapsto \bar\nu_t^{i-1}(\omega)$ is jointly measurable thanks to Lemma~\ref{lem:time-disintegration}.

\medskip

We next state a sub-Gaussian estimate  for averages of bounded $\R^d$-valued martingale differences.
\begin{lemma}\label{lem:md-exp}
Let   \((\mathcal G_j)_{j=0}^n\)  be a filtration and let   \((D_j)_{j=1}^n\) be \(\R^d\) -valued random vectors such that   \(D_j\) is \(\mathcal G_j\) -measurable,   \(\E[D_j\mid\mathcal G_{j-1}]=0\)  a.s., and   \(|D_j|\le L\)  a.s.   Set \(S_n:=\frac1n\sum_{j=1}^nD_j\) . Then there exists a   constant \(c_d>0\), depending only on the dimension \(d\), such that
\[
\E\exp\!\left(c_d\,\frac{n}{L^2}|S_n|^2\right)\le 2.
\] 
In particular, one may take   \(c_d=1/(6d)\) .
\end{lemma}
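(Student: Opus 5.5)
The plan is to use a vector Azuma--Hoeffding bound, followed by a Gaussian-tail integration. Since the statement is dimension-dependent with $c_d=1/(6d)$, I would reduce to scalar martingales coordinate by coordinate.

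\textbf{Step 1 (scalar MGF).} Fix a unit vector $e\in\R^d$, and in particular take $e$ to be a coordinate vector $e_k$. The increments $e\cdot D_j$ are real martingale differences bounded by $L$. Hoeffding's lemma, applied conditionally on $\mathcal G_{j-1}$, gives
\[
\E[\exp(\lambda e\cdot D_j)\mid\mathcal G_{j-1}]\le \exp(\lambda^2L^2/2).
\]
Iterating the tower property then yields
\[
\E\exp\Big(\lambda\sum_{j=1}^n e\cdot D_j\Big)\le \exp(n\lambda^2L^2/2)
\qquad\text{for all }\lambda\in\R.
\]

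\textbf{Step 2 (square exponential for one coordinate).} Write $Y:=e_k\cdot\sum_j D_j = n\,S_n^k$. Let $g$ be a standard Gaussian independent of everything, and use the identity
\[
\E_g\exp(\lambda g Y)=\exp(\lambda^2Y^2/2).
\]
By Fubini and Step 1,
\[
\E\exp(\lambda^2Y^2/2)=\E_g\E\exp(\lambda gY)\le \E_g\exp(n\lambda^2L^2g^2/2)=(1-n\lambda^2L^2)^{-1/2},
\]
valid whenever $n\lambda^2L^2<1$. Choosing $\lambda^2=\theta/(nL^2)$ gives
\[
\E\exp\Big(\frac{\theta}{2}\frac{n}{L^2}|S_n^k|^2\Big)\le (1-\theta)^{-1/2}.
\]

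\textbf{Step 3 (combine coordinates).} Since $|S_n|^2=\sum_{k=1}^d|S_n^k|^2$, Hölder's inequality (equivalently, convexity of $\exp$ applied to $\frac1d\sum_k d\,a_k$) gives
\[
\E\exp\Big(c\frac{n}{L^2}|S_n|^2\Big)\le \frac1d\sum_{k=1}^d\E\exp\Big(cd\frac{n}{L^2}|S_n^k|^2\Big).
\]
Taking $c=1/(6d)$ means $cd=1/6=\theta/2$ with $\theta=1/3$, so each term on the right is at most $(2/3)^{-1/2}=\sqrt{3/2}<2$. This gives the claim with $c_d=1/(6d)$.

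\textbf{Main obstacle.} The step needing the most care is Step 2. The Gaussian linearization together with the Fubini exchange requires the conditional Hoeffding bound of Step 1 to hold for every real $\lambda$. It does, since the bound comes purely from boundedness and the martingale-difference property and no integrability beyond $|D_j|\le L$ is needed. The rest is routine.
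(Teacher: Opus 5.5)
Your proof is correct. It shares the paper's skeleton: reduce to the $d$ scalar coordinate martingales, get a square-exponential moment for each coordinate with constant $1/6$, then recombine the coordinates. The middle step, however, is done differently.

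The paper works through probabilities. It invokes the scalar Azuma--Hoeffding tail bound $\mathbb P(|S_n^a|\ge r)\le 2\exp(-nr^2/(2L^2))$. It then converts this tail into an exponential moment by the standard layer-cake integration, which for $Z=\frac{n}{L^2}|S_n^a|^2$ gives $\E e^{\alpha Z}\le 1+\frac{2\alpha}{1/2-\alpha}$; at $\alpha=1/6$ this equals exactly $2$. Finally it recombines coordinates by the generalized H\"older inequality, a product of $d$-th roots.

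You instead stay at the level of moment generating functions. The conditional Hoeffding lemma plus the Gaussian linearization $e^{\lambda^2Y^2/2}=\E_g e^{\lambda gY}$ give the closed-form bound $(1-\theta)^{-1/2}$ directly, with no tail integration. You then recombine with Jensen (an arithmetic rather than geometric mean), which is equally sufficient here.

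Your route buys a slightly sharper, explicit constant. At $c_d=1/(6d)$ you get $\sqrt{3/2}$ rather than $2$, and in fact any $c_d\le 3/(8d)$ still gives a bound of $2$. The paper's route has the advantage of quoting an off-the-shelf tail inequality rather than the Hubbard--Stratonovich trick.

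You correctly identify and justify the one delicate point. Step 1 holds for every real $\lambda$, using only boundedness and the martingale-difference property, so Tonelli can legitimately be applied with $\lambda$ replaced by $\lambda g$.
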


\begin{proof}
  Write \(D_j=(D_j^1,\ldots,D_j^d)\) and \(S_n=(S_n^1,\ldots,S_n^d)\). For each coordinate \(a=1,\ldots,d\), the scalar sequence \((D_j^a)_{j=1}^n\) is a martingale-difference sequence with \(|D_j^a|\le L\). The scalar  Azuma--Hoeffding inequality  gives
  \[
\mathbb P(|S_n^a|\ge r)\le 2\exp\!\left(-\frac{nr^2}{2L^2}\right),
\qquad r\ge0.
\] 
  As in the standard tail-to-moment computation, this implies
\[
\E\exp\!\left(\frac{n}{6L^2}|S_n^a|^2\right)\le 2,
\qquad a=1,\ldots,d.
\] 
  Since \(|S_n|^2=\sum_{a=1}^d |S_n^a|^2\), Hölder's inequality yields
\[
\E\exp\!\left(\frac{n}{6dL^2}|S_n|^2\right)
=
\E\prod_{a=1}^d
\exp\!\left(\frac{n}{6dL^2}|S_n^a|^2\right)
\le
\prod_{a=1}^d
\left[
\E\exp\!\left(\frac{n}{6L^2}|S_n^a|^2\right)
\right]^{1/d}
\le 2.
\]
 This proves the claim .
\end{proof}

Fix   \(i\ge2\) and a time \(t\) for which the conditional kernels below are defined , and recall the decoupled reference law
\[
Q^{1:i}_{[0,t]}:=P^{1:i-1}_{[0,t]}\otimes \bar P_{[0,t]}.
\]
Under the reference law $Q^{1:i}_{[0,t]}$, the last coordinate is independent of the predecessors.  
Lemma~\ref{lem:md-exp} has a direct consequence for our setting, which we summarize in the following lemma.

\begin{lemma}\label{lem:A-exp-Q}
	Fix $i\ge2$  . For Lebesgue-a.e.\ $t\in[0,T]$, let  $Q^{1:i}_{[0,t]}:=P^{1:i-1}_{[0,t]}\otimes \bar P_{[0,t]}$, denote by $Z^i$
	the $i$-th coordinate under $Q^{1:i}_{[0,t]}$, such that  $\Law(Z^i) = \bar P_{[0,t]}$ and $Z^i$ is independent of $(X^1,\dots,X^{i-1})$. 
	Then there exists a   constant \(c_d>0\), depending only on \(d\), such that
	\[
	\E_{Q^{1:i}_{[0,t]}}
	\exp\!\left(\frac{c_d(i-1)}{\,\|K\|_\infty^2}\,
	\Big|\Kop{\mu_t^{i-1}}{Z_t^i}-\Kop{\bar\nu_t^{i-1}}{Z_t^i}\Big|^2\right)\le 2 .
	\] 
\end{lemma}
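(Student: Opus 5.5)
Under $Q^{1:i}_{[0,t]}$ the point $Z^i_t$ is independent of $(X^1,\dots,X^{i-1})$, so the plan is to condition on $Z^i_t=z$ and apply Lemma~\ref{lem:md-exp} for each frozen $z$. Then we integrate against $\bar\rho_t(\ud z)$ by Fubini. For fixed $z\in\R^d$, the quantity inside the exponential is $|S_{i-1}(z)|^2$ with
\[
S_{i-1}(z)=\frac1{i-1}\sum_{j<i}D_j(z),\qquad D_j(z):=K(z,X_t^j)-\int_{\R^d}K(z,y)\,\nu_t^j(\ud y),
\]
because $\Kop{\mu_t^{i-1}}{z}-\Kop{\bar\nu_t^{i-1}}{z}=\frac1{i-1}\sum_{j<i}\big(K(z,X^j_t)-\Kop{\nu^j_t}{z}\big)$ by linearity of $\nu\mapsto\Kop{\nu}{z}$.

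Next I check the hypotheses of Lemma~\ref{lem:md-exp}. Use the filtration $\mathcal G_j:=\mathcal G_t^{j}=\sigma(X_t^1,\dots,X_t^{j})$, with $\mathcal G_0$ trivial. Then $D_j(z)$ is $\mathcal G_j$-measurable, since $\nu_t^j=f_t^j(X_t^{1:j-1},\cdot)$ is a measurable function of $X_t^{1:j-1}$ (Lemma~\ref{lem:time-disintegration}). The bound $|D_j(z)|\le 2\|K\|_\infty=:L$ holds. The martingale property $\E[D_j(z)\mid\mathcal G_{j-1}]=0$ follows from $\nu_t^j=\Law(X_t^j\mid X_t^{1:j-1})$, which gives $\E[K(z,X_t^j)\mid \mathcal G_{j-1}]=\int K(z,y)\,\nu_t^j(\ud y)$; for $j=1$ this is just $\nu_t^1=\Law(X_t^1)$. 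Here the restriction to Lebesgue-a.e.\ $t$ enters: the kernel $f_t^j$ is a version of the conditional law only for a.e.\ $t$ by \eqref{eq:nu-def-slice}. Since the identity is needed for all $j<i$ simultaneously (finitely many), we discard a null set of times, and for the remaining $t$ the conditional-law identity holds $\mathbb P$-a.s.

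For such $t$ and every $z$, Lemma~\ref{lem:md-exp} with $n=i-1$ gives
\[
\E\exp\!\Big(c_d'\tfrac{i-1}{4\|K\|_\infty^2}|S_{i-1}(z)|^2\Big)\le 2,
\]
so the claim holds with $c_d:=c_d'/4=1/(24d)$. Finally, since under $Q^{1:i}_{[0,t]}$ the law of $(X_t^{1:i-1},Z^i_t)$ is $\Law(X_t^{1:i-1})\otimes\bar\rho_t$, Fubini (the integrand is nonnegative and jointly measurable in $(z,\omega)$ because $K$ is Borel and the kernels are jointly measurable) gives
\[
\E_Q[\cdots]=\int_{\R^d}\E\big[\exp(\cdots)\big|_{z}\big]\,\bar\rho_t(\ud z)\le 2 .
\]

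The main obstacle is not the estimate but the measure-theoretic bookkeeping. The time-$t$ conditional kernels are defined only $\ud t$-a.e., and the martingale-difference identity must hold simultaneously for all $z$. The latter is fine, because for a fixed version of the kernel the identity $\E[g(X^j_t)\mid X^{1:j-1}_t]=\int g\,\ud\nu^j_t$ holds for each bounded measurable $g$; applying it with $g=K(z,\cdot)$ pointwise in $z$ inside the Fubini integral is legitimate.
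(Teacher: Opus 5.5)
Your proposal is correct and follows essentially the same route as the paper. You freeze the evaluation point $z$, apply Lemma~\ref{lem:md-exp} to the martingale differences $D_j(z)$ with $n=i-1$ and $L=2\|K\|_\infty$, giving $c_d=1/(24d)$, which is exactly the paper's ``decrease $c_d$ by a universal factor''. You then integrate over $Z^i_t\sim\bar\rho_t$ using its independence from the predecessors under $Q^{1:i}_{[0,t]}$; your explicit treatment of the $\ud t$-null set of times and of joint measurability for Fubini tightens details the paper leaves implicit.
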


\begin{proof}
	All expectations in this proof are taken  under the law  $Q^{1:i}_{[0,t]}$.
	Fix   such a time \(t\) and fix \(x\in\R^d\).

We use the convention \(\mathcal G_t^0=\{\varnothing,\Omega\}\) and
\(\nu_t^1=\Law(X_t^1)\).

For \(1\le j<i\) , set the time-  \(t\)  predecessor sigma-fields
	  \[
	\mathcal G_t^j:=\sigma(X_t^1,\dots,X_t^j),
	\]
	and define
	\[
	D_j(x):=K(x,X_t^j)-\mathbb E[K(x,X_t^j)\mid \mathcal G_t^{j-1}]\in\R^d,
	\qquad
	S_{i-1} (x):=\frac1{i-1}\sum_{j<i}D_j(x).
	\] 
	Then $D_j(x)$ is $\mathcal G_t^{j}$-measurable, $\E[D_j(x)\mid\mathcal G_t^{j-1}]=0$,
	and $|D_j(x)|\le 2\|K\|_\infty$.
	Moreover, by definition of $\nu_t^j$, one has
	\[
	\int_{\R^d}K(x,y)\,\nu_t^j(dy)
	= \E\!\left[K(x,X_t^j)\mid \mathcal G_t^{j-1}\right]
	\quad\text{in the } \ud t\times\mathbb{P}\text{-a.s.\ sense}. 
	\]
	Hence 
	\[
	\Kop{\mu_t^{i-1}}{x}-\Kop{\bar\nu_t^{i-1}}{x}
	= \frac1{i-1}\sum_{j<i}\Big(K(x,X_t^j)-\int_{\R^d } K(x,y)\nu_t^j(dy)\Big)
	= S_{i-1}(x).
	\]

		We apply  Lemma~\ref{lem:md-exp} to the martingale differences $(D_j(x))_{1\le j<i}$
		with $n=i-1$ and $L=2\|K\|_\infty$. After decreasing the constant \(c_d\) by a universal factor, if necessary, we obtain for any fixed  $x \in \mathbb{R}^d$,  
	  \[
	\E\exp\!\left(\frac{c_d(i-1)}{\,\|K\|_\infty^2}\,|S_{i-1} (x)|^2\right)\le 2,
	\] 
	where the constant   $c_d>0$ depends only on \(d\) . 
	Now return to the random evaluation point $x=Z_t^i$.
	Under the law $Q^{1:i}_{[0,t]}$, the $i$-th coordinate $Z_t^i$ is independent of the predecessors $(X_t^1,\dots,X_t^{i-1})$,
	so conditioning on $Z_t^i$ does not change the law of the predecessors and the above bound remains valid:
	  \[
	\E\exp\!\left(\frac{c_d(i-1)}{\,\|K\|_\infty^2}\,|S_{i-1}(Z_t^i)|^2\right)
	= \E\Big[\ \E\Big[\exp\!\Big(\frac{c_d(i-1)}{\,\|K\|_\infty^2}\,|S_{i-1}(Z_t^i)|^2\Big)\,\Big|\,Z_t^i\Big]\ \Big]
	\le 2.
	\] 
	Since $S_{i-1}(Z_t^i)=\Kop{\mu_t^{i-1}}{Z_t^i}-\Kop{\bar\nu_t^{i-1}}{Z_t^i}$, the result then follows.
\end{proof}

\begin{remark}
	We now tailor the previous lemmas---in particular Lemma~\ref{lem:A-exp-Q}---to our purpose of estimating the term $A_t^i$.
	However, the main message can be stated more generally as follows.
	Let $F_N$ be a probability density on $(\R^d)^N$, and write
	$F_N=\Law(X_1,\ldots,X_N)$.
	On the one hand, projecting to $\R^d$ yields the (random) empirical measure $\mu_N:=\frac1N\sum_{j=1}^N\delta_{X_j}$. On the other hand, one may disintegrate $F_N$ sequentially and define the conditional laws
	$
	\nu^j:=\Law\bigl(X_j\,\big|\,X_1,\ldots,X_{j-1}\bigr) 
	$
	as well as their averaged version
	$
	\nu_N:=\frac1N\sum_{j=1}^N \nu^j.
	$
	The preceding lemmas show that $\mu_N$ and $\nu_N$ are, in a precise sense, automatically close---and with essentially optimal control---without any additional structure beyond the existence of these conditional laws.
	  As we have already used above, this viewpoint extends verbatim to path-space distributions  .
 \end{remark}

We now transfer the estimate back to the true law $P^{1:i}_{[0,t]}$.
\begin{corollary}\label{cor:A-transfer-entropy}
Fix   \(i\ge2\). For Lebesgue-a.e.\ \(t\in[0,T]\), there exists a constant \(C_d>0\), depending only on \(d\), such that
\[
  \E\big|A_t^i\big|^2
  \le
  \frac{C_d\|K\|_\infty^2}{i-1}\big(R_i(t)+1\big).
\] 
\end{corollary}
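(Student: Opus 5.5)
The plan is to transfer the exponential moment bound of Lemma~\ref{lem:A-exp-Q}, which holds under the decoupled law $Q^{1:i}_{[0,t]}$, to the true law $P^{1:i}_{[0,t]}$ by means of the Donsker--Varadhan variational formula. The relative entropy between the two laws is exactly $R_i(t)$, which is why $R_i(t)$ appears on the right-hand side.

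First, I observe that $A_t^i$ is a measurable functional of the canonical path $(Z^1,\dots,Z^i)_{[0,t]}$. Indeed, $\bar\nu_t^{i-1}$ is a jointly measurable function of $X_t^{1:i-1}$ by Lemma~\ref{lem:time-disintegration}, so define
\[
\Phi(z^{1:i}):=\bigl|\Kop{\mu_t^{i-1}(z^{1:i-1})}{z_t^i}-\Kop{\bar\nu_t^{i-1}(z^{1:i-1})}{z_t^i}\bigr|^2 .
\]
Under $P^{1:i}_{[0,t]}$ we have $\Phi=|A_t^i|^2$. Under $Q^{1:i}_{[0,t]}$ the first $i-1$ coordinates have the same joint law as under $P$. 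Hence the conditional kernels $\nu_t^j$, $j<i$, and therefore $\bar\nu_t^{i-1}$, are the same objects under both laws. This is the point to check carefully: the martingale-difference structure used in Lemma~\ref{lem:A-exp-Q} refers only to the law of the predecessors, which $P$ and $Q$ share, so that lemma applies to this $\Phi$ for a.e.\ $t$.

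Second, I apply Donsker--Varadhan: for $\lambda>0$ and any measurable $\Phi\ge0$,
\[
\E_{P}[\lambda\Phi]\le \Ent\bigl(P^{1:i}_{[0,t]}\mid Q^{1:i}_{[0,t]}\bigr)+\log\E_{Q}\bigl[e^{\lambda\Phi}\bigr].
\]
I take $\lambda=c_d(i-1)/\|K\|_\infty^2$. By Lemma~\ref{lem:A-exp-Q} the last term is at most $\log 2$. This gives
\[
\E|A_t^i|^2\le \frac{\|K\|_\infty^2}{c_d(i-1)}\bigl(R_i(t)+\log 2\bigr),
\]
which yields the claim with $C_d=1/c_d$.

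The only technical points are the use of Donsker--Varadhan for unbounded $\Phi$ and the possibility $R_i(t)=\infty$. $\Phi$ is in fact bounded by $4\|K\|_\infty^2$, so the formula applies directly. If $R_i(t)=\infty$ the bound is trivial. If not, Lemma~\ref{lem:Ri-girsanov} gives $R_i(t)<\infty$ whenever $R_i(0)<\infty$. I expect the main obstacle to be the measurability and identification step, namely that $\bar\nu_t^{i-1}$ evaluated on the canonical process is the same function under $P$ and under $Q$, for a.e.\ $t$. This follows because both laws have the same first $i-1$ marginals and the kernel $f_t^j$ is a fixed Borel function.
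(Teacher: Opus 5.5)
Your proposal is correct and follows the paper's proof essentially verbatim. You use the Donsker--Varadhan formula with $\lambda=c_d(i-1)/\|K\|_\infty^2$, the exponential bound of Lemma~\ref{lem:A-exp-Q} under the decoupled law, and the identity $\Ent(P\mid Q)=R_i(t)$, which gives the claim with $C_d=1/c_d$ since $\log 2\le 1$. Your additional remarks are correct details that the paper leaves implicit: the integrand is bounded by $4\|K\|_\infty^2$, and $\bar\nu_t^{i-1}$ is the same Borel functional under $P$ and $Q$ because the predecessor marginals coincide.
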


\begin{proof}
Let $P:=P^{1:i}_{[0,t]}$ and $Q:=Q^{1:i}_{[0,t]}=P^{1:i-1}_{[0,t]}\otimes \bar P_{[0,t]}$.
Define $h(z):=\big|\Kop{\mu_t^{i-1}}{z_t^i}-\Kop{\bar\nu_t^{i-1}}{z_t^i}\big|^2$ on $\mathcal C_t^i$,
so that $\E_P[h]=\E|A_t^i|^2$.
By the Donsker--Varadhan variational formula, for any $\lambda>0$,
\[
  \E_P[h]\le \frac{1}{\lambda}\Ent(P\mid Q)+\frac{1}{\lambda}\log\E_Q[e^{\lambda h}].
\]
Here $\Ent(P\mid Q)=R_i(t)$ by  the definition  of the incremental relative entropy.
Take   $\lambda=\frac{c_d(i-1)}{\|K\|_\infty^2}$  and apply Lemma~\ref{lem:A-exp-Q} (with   \(c_d>0\) )
to get $\E_Q[e^{\lambda h}]\le 2$. Hence 
  \[
  \E|A_t^i|^2
  \le \frac{\|K\|_\infty^2}{c_d(i-1)}\,R_i(t)+\frac{\|K\|_\infty^2\log 2}{c_d(i-1)}
  \le \frac{C_d\|K\|_\infty^2}{i-1}\,(R_i(t)+1).
\] 
\end{proof}

We now turn to the estimate of the second term $B_t^i$ in \eqref{eq:Delta-decomp}.
At this point it is worth emphasizing the role of the auxiliary random measures $(\nu_t^j)_{j\ge2}$ obtained by
time--space disintegration (Lemma~\ref{lem:time-disintegration}).
By construction, $\nu_t^j$ is a version of the conditional marginal law of $X_t^j$ given the predecessor configuration
$X_t^{1:j-1}$, and $\bar\nu_t^{i-1}=\frac1{i-1}\sum_{j<i}\nu_t^j$ is the corresponding averaged conditional law.
Thus $\bar\nu_t^{i-1}$ should be viewed as the ``predictable'' mean of the empirical measure $\mu_t^{i-1}$ at time $t$,
and the pair $(\mu_t^{i-1},\bar\nu_t^{i-1})$ forms a canonical empirical/conditional-mean couple.

Such conditional-mean random measures appear naturally in the weak-convergence approach to large deviations and controlled
limits (see, e.g., \cite{dupuis2011ldp}), and have also been used in our earlier work \cite{wang2022mean}.
A key advantage is that they inherit regularity from the underlying law in a robust way:
information--theoretic quantities such as relative entropy (and, in diffusive settings, Fisher information)
propagate along these conditional laws.
In the present bounded-kernel setting we only need a very soft consequence of this principle:
$\bar\nu_t^{i-1}$ is much closer to the limit marginal $\bar\rho_t$ than the raw empirical measure $\mu_t^{i-1}$.
For instance, the   random  part of $\mu_t^{i-1}-\bar\rho_t$ does not vanish in total variation. 
This replacement is  what makes the $B$--term estimate possible; without passing to the disintegrated conditional
measures, one cannot close the argument at the level of $\|\cdot\|_{\TV}$.

 \begin{lemma}\label{lem:Bterm}
For every   \(i\ge2\) and for Lebesgue-a.e.\ \(t\in[0,T]\) ,
\[
  \E\Big|\Kop{\bar\nu_t^{i-1}}{X_t^i}-\Kop{\bar\rho_t}{X_t^i}\Big|^2
  \le \frac{2\|K\|_\infty^2}{(i-1)^2}\Big(\sum_{j<i}\sqrt{R_j(t)}\Big)^2 .
\]
\end{lemma}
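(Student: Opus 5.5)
The plan is a pointwise total-variation bound followed by Pinsker's inequality for each conditional law. For any probability measures $\nu,\rho$ and any $x$, boundedness of $K$ gives
\[
\big|\Kop{\nu}{x}-\Kop{\rho}{x}\big|\le \|K\|_\infty\,\|\nu-\rho\|_{\TV},
\]
where $\|\cdot\|_{\TV}$ is the total mass of the signed measure. The constant in front of $\|\cdot\|_{\TV}$ must be checked against the convention of Lemma~\ref{lem:entropy-tools}(i); this is where the factor $2$ in the statement should originate. Applied with $\nu=\bar\nu_t^{i-1}(\omega)$ and $\rho=\bar\rho_t$, the bound no longer depends on the evaluation point. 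The correlation of $X_t^i$ with the predecessors therefore causes no difficulty. By the triangle inequality for the average,
\[
\big|\Kop{\bar\nu_t^{i-1}}{X_t^i}-\Kop{\bar\rho_t}{X_t^i}\big|\le \frac{\|K\|_\infty}{i-1}\sum_{j<i}\|\nu_t^j-\bar\rho_t\|_{\TV}.
\]

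Next I square this and take expectations. Minkowski's inequality in $L^2(\mathbb P)$ gives
\[
\E\Big(\sum_{j<i}\|\nu_t^j-\bar\rho_t\|_{\TV}\Big)^2\le \Big(\sum_{j<i}\big(\E\|\nu_t^j-\bar\rho_t\|_{\TV}^2\big)^{1/2}\Big)^2.
\]
So it suffices to show $\E\|\nu_t^j-\bar\rho_t\|_{\TV}^2\le 2R_j(t)$, up to the TV normalization, for each $j<i$. Pinsker's inequality (Lemma~\ref{lem:entropy-tools}(i)), applied $\omega$-wise, gives $\|\nu_t^j-\bar\rho_t\|_{\TV}^2\le 2\,\Ent(\nu_t^j\mid\bar\rho_t)$. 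The key step is therefore to prove
\[
\E\,\Ent(\nu_t^j\mid\bar\rho_t)\le R_j(t).
\]

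To get this, I use the disintegrated form \eqref{eq:R_i_conditional} of $R_j(t)$, written as the expected entropy of the conditional law of the whole path $X^j_{[0,t]}$ given the predecessor paths $X^{1:j-1}_{[0,t]}$, relative to $\bar P_{[0,t]}$. Two reductions follow.

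First, evaluating at time $t$ is a projection. By data processing, Lemma~\ref{lem:entropy-tools}(ii), the conditional law of $X_t^j$ given the predecessor paths has entropy relative to $\bar\rho_t$ at most that of the path conditional law.

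Second, $\nu_t^j$ conditions only on the coarser $\sigma$-field generated by $X_t^{1:j-1}$, and it is the conditional expectation of the finer kernel. Since $\Ent(\cdot\mid\bar\rho_t)$ is convex, conditional Jensen gives $\Ent(\nu_t^j\mid\bar\rho_t)\le\E[\Ent(\text{finer kernel}\mid\bar\rho_t)\mid X_t^{1:j-1}]$. Equivalently, one can apply the chain rule Lemma~\ref{lem:entropy-tools}(iii) to the law of $(X_t^{1:j-1},X_t^j)$ against $\Law(X_t^{1:j-1})\otimes\bar\rho_t$, and then data processing from the path-level pair $P^{1:j}_{[0,t]}$ versus $P^{1:j-1}_{[0,t]}\otimes\bar P_{[0,t]}$.

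The second formulation is cleanest. The map $(\gamma^{1:j})\mapsto(\gamma^{1:j}_t)$ pushes $P^{1:j}_{[0,t]}$ to $\Law(X^{1:j}_t)$ and pushes $P^{1:j-1}_{[0,t]}\otimes\bar P_{[0,t]}$ to $\Law(X_t^{1:j-1})\otimes\bar\rho_t$. Hence
\[
\Ent\big(\Law(X_t^{1:j})\,\big|\,\Law(X_t^{1:j-1})\otimes\bar\rho_t\big)\le R_j(t),
\]
and by the chain rule the left side equals $\E\,\Ent(\nu_t^j\mid\bar\rho_t)$. The case $j=1$ is the same with $\nu_t^1=\Law(X_t^1)$ and Corollary~\ref{cor:incremental}(iii).

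The main technical point is measurability. The kernel $\nu_t^j$ must be the jointly measurable version from Lemma~\ref{lem:time-disintegration}, and that version coincides with the regular conditional law only for a.e.\ $t$. This is why the lemma is stated for Lebesgue-a.e.\ $t$: I fix such a $t$, at which $\nu_t^j$ is a genuine version of $\Law(X_t^j\mid X_t^{1:j-1})$ for all $j<i$, and the chain rule applies there. Combining the displays then yields the claimed bound.
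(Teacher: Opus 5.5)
Your proposal is correct and follows essentially the same route as the paper. Both arguments use a total-variation bound that is uniform in the evaluation point $X_t^i$, the triangle inequality over $j<i$, Pinsker's inequality, Minkowski's inequality in $L^2(\mathbb P)$, and finally $\E\,\Ent(\nu_t^j\mid\bar\rho_t)\le R_j(t)$. The paper applies Pinsker before Minkowski, which makes no difference. For the last step, you push $P^{1:j}_{[0,t]}$ and $P^{1:j-1}_{[0,t]}\otimes\bar P_{[0,t]}$ forward to time-$t$ marginals and apply the chain rule against $\Law(X_t^{1:j-1})\otimes\bar\rho_t$; this is a more explicit version of the paper's one-line appeal to data processing and the disintegrated form of $R_j(t)$. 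The factor $2$ does come from Pinsker with the total-mass convention for $\|\cdot\|_{\TV}$.
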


\begin{proof}
Fix $x\in\R^d$ and $\eta\in\mathcal P(\R^d)$. Then
\[
  \big|\Kop{\eta}{x}-\Kop{\bar\rho_t}{x}\big|
  =\Big|\int_{\R^d}K(x,y)\,(\eta-\bar\rho_t)(\ud y)\Big|
  \le \|K\|_\infty\,\|\eta-\bar\rho_t\|_{\TV}.
\]
Since $\bar\nu_t^{i-1}=\frac{1}{i-1}\sum_{j<i}\nu_t^j$, by the triangle inequality one obtains that 
\[
  \|\bar\nu_t^{i-1}-\bar\rho_t\|_{\TV}
  \le \frac{1}{i-1}\sum_{j<i}\|\nu_t^j-\bar\rho_t\|_{\TV}
  \le \frac{1}{i-1}\sum_{j<i}\sqrt{2\,\Ent(\nu_t^j\mid\bar\rho_t)},
\]
where the last inequality is simply  Pinsker's inequality (see Lemma \ref{lem:entropy-tools} (i)). 

Therefore, for every $x$,
\[
  \big|\Kop{\bar\nu_t^{i-1}}{x}-\Kop{\bar\rho_t}{x}\big|
  \le \frac{\sqrt2\,\|K\|_\infty}{i-1}\sum_{j<i}\sqrt{\Ent(\nu_t^j\mid\bar\rho_t)}.
\]
Evaluate at $x=X_t^i$, square, and set $H_j:=\Ent(\nu_t^j\mid\bar\rho_t)$ to get that 
\[
  \Big|\Kop{\bar\nu_t^{i-1}}{X_t^i}-\Kop{\bar\rho_t}{X_t^i}\Big|^2
  \le \frac{2\|K\|_\infty^2}{(i-1)^2}\Big(\sum_{j<i}\sqrt{H_j}\Big)^2.
\]
Taking expectation and using Minkowski's inequality in $L^2$ for the nonnegative variables $\sqrt{H_j}$ yields that 
\[
  \E\Big(\sum_{j<i}\sqrt{H_j}\Big)^2
  \le \Big(\sum_{j<i}\sqrt{\E H_j}\Big)^2.
\]
Finally, by the data processing inequality (see Lemma \ref{lem:entropy-tools} (ii)) applied to the projection
$(X^{1:j}_{[0,t]})\mapsto (X^{1:j}_{t})$, we have
\[
  \E H_j=\E\big[\Ent(\nu_t^j\mid\bar\rho_t)\big]\le R_j(t),
\]
where for \(j=1\) this follows directly from data processing applied to the first-particle path entropy \(R_1(t)\), and for \(j\ge2\) it follows from the disintegrated form \eqref{eq:R_i_conditional}.
This yields the claim.
\end{proof}

\subsection{Upper-envelope closure and tail chaos}\label{subsec:envelope}

Fix $i\ge2$ and $t\in[0,T]$.
By Lemma~  \ref{lem:Ri-girsanov} ,
\[
  R_i(t)
  = R_i(0)+\frac12\,\E\int_0^t\big|\sigma^{-1}\Delta_s^i\big|^2\ud s,
  \qquad
  \Delta_s^i:=\Kop{\mu_s^{i-1}}{X_s^i}-\Kop{\bar\rho_s}{X_s^i}.
\]
With the decomposition $\Delta_s^i=A_s^i+B_s^i$ from Subsection~\ref{subsec:replacement},
\[
  A_s^i:=\Kop{\mu_s^{i-1}}{X_s^i}-\Kop{\bar\nu_s^{i-1}}{X_s^i},
  \qquad
  B_s^i:=\Kop{\bar\nu_s^{i-1}}{X_s^i}-\Kop{\bar\rho_s}{X_s^i},
\]
we obtain the following integral inequality.

 \begin{proposition}\label{prop:Ri-integral-ineq}
There exists a   constant \(C_d>0\), depending only on \(d\),  such that for all $i\ge2$ and all $t\in[0,T]$,
  \begin{equation}\label{eq:Ri-integral-ineq}
  R_i(t)
  \le R_i(0)
  +\kappa\int_0^t\Bigg(
      \frac{C_d}{i-1}\,R_i(s)
      +\frac{C_d}{i-1}
      +\frac{C_d}{(i-1)^2}\Big(\sum_{j<i}\sqrt{R_j(s)}\Big)^2
  \Bigg)\ud s,
\end{equation} 
where $\kappa:=\|\sigma^{-1}\|^2\|K\|_\infty^2$.
\end{proposition}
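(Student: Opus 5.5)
The plan is to start from the Girsanov identity of Lemma~\ref{lem:Ri-girsanov},
\[
R_i(t)=R_i(0)+\frac12\,\E\int_0^t\big|\sigma^{-1}\Delta_s^i\big|^2\ud s ,
\]
and bound the integrand pointwise in time (for Lebesgue-a.e.\ $s$) using the decomposition $\Delta_s^i=A_s^i+B_s^i$ from \eqref{eq:Delta-decomp}. The first step uses $|\sigma^{-1}v|\le\|\sigma^{-1}\|\,|v|$ and $|a+b|^2\le 2|a|^2+2|b|^2$, which give
\[
\frac12\big|\sigma^{-1}\Delta_s^i\big|^2\le \|\sigma^{-1}\|^2\big(|A_s^i|^2+|B_s^i|^2\big).
\]
Integrability is not an issue, since $\Delta^i$ is bounded by $2\|K\|_\infty$ and all quantities are finite.

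Next we insert the two earlier estimates. Corollary~\ref{cor:A-transfer-entropy} gives $\E|A_s^i|^2\le \frac{C_d\|K\|_\infty^2}{i-1}(R_i(s)+1)$ for a.e.\ $s$. Lemma~\ref{lem:Bterm} gives $\E|B_s^i|^2\le \frac{2\|K\|_\infty^2}{(i-1)^2}\big(\sum_{j<i}\sqrt{R_j(s)}\big)^2$ for a.e.\ $s$. We multiply both by $\|\sigma^{-1}\|^2$ and factor out $\kappa=\|\sigma^{-1}\|^2\|K\|_\infty^2$. Enlarging $C_d$ so that $C_d\ge 2$ covers the factor $2$ in the $B$-term. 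Since the bounds hold for a.e.\ $s$, they can be integrated over $[0,t]$ (Tonelli), which yields \eqref{eq:Ri-integral-ineq}.

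The one point needing care is measurability and the a.e.-in-time validity of the $A$ and $B$ bounds. These rely on the jointly measurable kernels of Lemma~\ref{lem:time-disintegration}, so that $s\mapsto\E|A_s^i|^2$ is measurable and the pointwise bounds may be integrated.

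The other point to check is that Corollary~\ref{cor:A-transfer-entropy} is applied with the entropy $R_i(s)$ at time $s$. The reference law there is $Q^{1:i}_{[0,s]}$, whose relative entropy is exactly $R_i(s)$. Alternatively, data processing from paths on $[0,s]$ to time-$s$ positions gives $\Ent\le R_i(s)$, which also suffices. Beyond this the argument is a direct assembly, and I expect no real obstacle.
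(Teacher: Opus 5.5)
Your proposal is correct and is essentially the paper's own proof: the Girsanov identity of Lemma~\ref{lem:Ri-girsanov}, then the split $|\Delta_s^i|^2\le 2|A_s^i|^2+2|B_s^i|^2$, then Corollary~\ref{cor:A-transfer-entropy} and Lemma~\ref{lem:Bterm} inserted for a.e.\ $s$, with numerical constants absorbed into $C_d$. Your remarks on joint measurability via Lemma~\ref{lem:time-disintegration} and on the relevant entropy being $R_i(s)$ at time $s$ are also correct; they only make explicit what the paper leaves implicit.
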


\begin{proof}
Since $|\Delta_s^i|^2\le 2|A_s^i|^2+2|B_s^i|^2$,
\[
  R_i(t)-R_i(0)
  =\frac12\,\E\int_0^t|\sigma^{-1}\Delta_s^i|^2\,ds
  \le \|\sigma^{-1}\|^2\int_0^t\Big(\E|A_s^i|^2+\E|B_s^i|^2\Big) \ud s.
\]
We estimate $\E|A_s^i|^2$ by Corollary~\ref{cor:A-transfer-entropy} and $\E|B_s^i|^2$ by Lemma~\ref{lem:Bterm}.
Absorbing numerical constants into   \(C_d\) yields  \eqref{eq:Ri-integral-ineq}.
\end{proof}

The last term in \eqref{eq:Ri-integral-ineq} involves the increments up to $i-1$. 
We now close \eqref{eq:Ri-integral-ineq} by controlling the coupling term
$\big(\sum_{j<i}\sqrt{R_j}\big)^2$ through an upper envelope along the index.

\begin{lemma}\label{lem:sqrtR-sum-envelope-i}
Set \(M_1(t):=R_1(t)\). For each \(i\ge2\), define the \(i\)-th upper envelope \(M_i(t)\) as
\[
  M_i(t):=\max\Big\{R_1(t),\ \max_{2\le k\le i}(k-1)\,R_k(t)\Big\},\qquad t\in[0,T].
\]
Then for every $i\ge2$ and every $s\in[0,T]$,
\begin{equation}\label{eq:sqrtR-sum-envelope-i}
  \Big(\sum_{j<i}\sqrt{R_j(s)}\Big)^2 \le 10\,(i-1)\,M_{i-1}(s).
\end{equation}
\end{lemma}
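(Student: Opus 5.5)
The bound is purely deterministic: it uses only the definition of the envelope $M_{i-1}(s)$ and an elementary estimate on $\sum_{k\le n} k^{-1/2}$. Time $s$ plays no role, so I suppress it. For $i=2$ the sum is $\sqrt{R_1}$, and $(\sqrt{R_1})^2=R_1=M_1\le 10\,M_1$, so the claim holds. From now on let $i\ge3$.

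By definition of $M_{i-1}$, we have $R_1\le M_{i-1}$, and for $2\le j\le i-1$ we have $(j-1)R_j\le M_{i-1}$. Hence
\[
\sqrt{R_1}\le \sqrt{M_{i-1}}, \qquad \sqrt{R_j}\le \sqrt{M_{i-1}}\,(j-1)^{-1/2}\quad (2\le j\le i-1).
\]
Summing these gives
\[
\sum_{j<i}\sqrt{R_j}\le \sqrt{M_{i-1}}\Big(1+\sum_{k=1}^{i-2}k^{-1/2}\Big).
\]

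Next, compare the sum with an integral:
\[
\sum_{k=1}^{n}k^{-1/2}\le 1+\int_1^n x^{-1/2}\,\mathrm dx = 2\sqrt n-1.
\]
With $n=i-2$, this gives
\[
1+\sum_{k=1}^{i-2}k^{-1/2}\le 2\sqrt{i-2}\le 2\sqrt{i-1}.
\]
Squaring yields
\[
\Big(\sum_{j<i}\sqrt{R_j}\Big)^2\le 4(i-1)M_{i-1},
\]
which is stronger than the claimed constant $10$. The only care needed is the treatment of $R_1$, which carries no factor $(j-1)$, and of the small indices.
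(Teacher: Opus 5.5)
Your proof is correct and follows the paper's argument: you bound $\sqrt{R_1}\le\sqrt{M_{i-1}}$ and $\sqrt{R_j}\le\sqrt{M_{i-1}}/\sqrt{j-1}$ for $2\le j\le i-1$, then sum $k^{-1/2}$. The only difference is your sharper comparison $\sum_{k\le n}k^{-1/2}\le 2\sqrt n-1$, which absorbs the $R_1$ term and gives the constant $4$ instead of the paper's $10$ (the paper uses $\sum_{k\le n}k^{-1/2}\le 2\sqrt n$ together with $(a+b)^2\le 2a^2+2b^2$); your separate handling of $i=2$ is needed because that sharper bound fails for $n=0$.
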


\begin{proof}
By definition of $M_{i-1}(t)$, one has that  $R_1(s)\le M_{i-1}(s)$, and   $(j-1)R_j(s)\le M_{i-1}(s)$ for  $2\le j\le i-1$ .  Therefore 
\[
  \sqrt{R_1(s)}\le \sqrt{M_{i-1}(s)},\qquad
  \sqrt{R_j(s)}\le \frac{\sqrt{M_{i-1}(s)}}{\sqrt{j-1}}\quad(2\le j\le i-1).
\]
Therefore,
\[
  \sum_{j<i}\sqrt{R_j(s)}
  \le \sqrt{M_{i-1}(s)}+\sqrt{M_{i-1}(s)}\sum_{j=2}^{i-1}\frac1{\sqrt {j-1}}
  \le \sqrt{M_{i-1}(s)}+2\sqrt{(i-1)M_{i-1}(s)},
\]
where we used $\sum_{m=1}^{n}m^{-1/2}\le 2\sqrt n$ and $i-2\le i-1$.
Using $(a+b)^2\le 2a^2+2b^2$ yields
\[
  \Big(\sum_{j<i}\sqrt{R_j(s)}\Big)^2
  \le 2M_{i-1}(s)+8(i-1)M_{i-1}(s)
  \le 10(i-1)M_{i-1}(s),
\]
which proves \eqref{eq:sqrtR-sum-envelope-i}.
\end{proof}

We thus combine Proposition \ref{prop:Ri-integral-ineq} and the above lemma \ref{lem:sqrtR-sum-envelope-i} to obtain the following estimates for the   envelope  $M_i$. 

\begin{proposition}\label{prop:envelope-i}
There exists a   constant \(C_d>0\), depending only on \(d\),  such that for every $i\ge2$ and every $t\in[0,T]$,
  \begin{equation}\label{eq:Mi-closed}
  M_i(t)\le M_i(0)+\kappa\int_0^t\big(C_d\,M_i(s)+C_d\big) \ud s.
\end{equation} 
Consequently, there exists $C_T>0$ depending only on   \(d\) and \((T,\kappa)\)  such that for every $i\ge2$ and every $t\in[0,T]$,
\begin{equation}\label{eq:Mi-bound}
  M_i(t)\le C_T\bigl(1+M_i(0)\bigr),
  \qquad\text{and hence}\qquad
  R_i(t)\le \frac{C_T}{i-1}\bigl(1+M_i(0)\bigr).
\end{equation}
\end{proposition}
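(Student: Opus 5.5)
The plan is to multiply the integral inequality of Proposition~\ref{prop:Ri-integral-ineq} by \((k-1)\) for each \(2\le k\le i\), bound the coupling term using Lemma~\ref{lem:sqrtR-sum-envelope-i}, and then take the maximum over \(k\) to obtain a closed linear integral inequality for \(M_i\). Gronwall's lemma then gives \eqref{eq:Mi-bound}.

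\emph{The generic index.} Fix \(2\le k\le i\) and multiply \eqref{eq:Ri-integral-ineq}, written for index \(k\), by \((k-1)\). This gives
\[
(k-1)R_k(t)\le (k-1)R_k(0)+\kappa\int_0^t\Big(C_dR_k(s)+C_d+\frac{C_d}{k-1}\Big(\sum_{j<k}\sqrt{R_j(s)}\Big)^2\Big)\ud s .
\]
The three terms in the integrand are handled as follows.
\begin{itemize}
\item \(R_k(s)\le (k-1)R_k(s)\le M_i(s)\), since \(k-1\ge1\).
\item By Lemma~\ref{lem:sqrtR-sum-envelope-i}, \(\frac{1}{k-1}\big(\sum_{j<k}\sqrt{R_j(s)}\big)^2\le 10M_{k-1}(s)\le 10M_i(s)\). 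The envelopes are monotone in the index, so \(M_{k-1}\le M_i\).
\item \((k-1)R_k(0)\le M_i(0)\).
\end{itemize}
Together these yield \((k-1)R_k(t)\le M_i(0)+\kappa\int_0^t(C_d'M_i(s)+C_d)\ud s\).

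\emph{The first particle.} For \(R_1(t)\), use the explicit formula of Lemma~\ref{lem:Ri-girsanov}: \(R_1(t)\le R_1(0)+\frac12\kappa t\le M_i(0)+\kappa\int_0^t C_d\ud s\). The right-hand side of every bound is then independent of \(k\), so taking the maximum over \(k\in\{1,\dots,i\}\) yields \eqref{eq:Mi-closed}. This step works because the supremum is taken over the right-hand sides, each of which is dominated by the common expression.

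\emph{Measurability and finiteness.} Before applying Gronwall I check two points. First, \(s\mapsto M_i(s)\) is a maximum of finitely many nondecreasing functions, each \(R_k(s)\) being nondecreasing by the Girsanov identity \eqref{eq:Ri-integral}; hence \(M_i\) is measurable. Second, \(M_i\) is finite on \([0,T]\) whenever \(M_i(0)<\infty\). This follows crudely from \eqref{eq:Ri-integral} with \(|\Delta|\le 2\|K\|_\infty\), which gives \(R_k(t)\le R_k(0)+2\kappa t\). If \(M_i(0)=\infty\), the bound is trivial.

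\emph{Conclusion.} Gronwall's inequality then gives \(M_i(t)\le (M_i(0)+C_d\kappa T)e^{C_d\kappa T}\le C_T(1+M_i(0))\). The bound \(R_i(t)\le C_T(1+M_i(0))/(i-1)\) follows from \((i-1)R_i(t)\le M_i(t)\). Note that \(M_i(0)\) coincides with \(M_{0,i}\) of Theorem~\ref{thm:incremental}.

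The main point needing care is the absorption of the coupling term. After multiplying by \((k-1)\), its prefactor is \(1/(k-1)\), and Lemma~\ref{lem:sqrtR-sum-envelope-i} supplies exactly the compensating factor \((k-1)\). This is why the bound stays uniform in \(k\) and no constant grows with the index.
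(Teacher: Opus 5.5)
Your proposal is correct and takes essentially the same route as the paper. It multiplies the integral inequality by $(k-1)$, absorbs the coupling term through Lemma~\ref{lem:sqrtR-sum-envelope-i} using $M_{k-1}\le M_i$, controls $R_1$ by the explicit first-particle Girsanov formula, takes the maximum over the index, and concludes with Gronwall's inequality; your separate treatment of each index $k\le i$ and the measurability/finiteness check only make explicit what the paper leaves implicit.
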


\begin{proof}
Fix $i\ge2$ and $t\in[0,T]$. 
Using Lemma~\ref{lem:sqrtR-sum-envelope-i} and $M_{i-1}(s)\le M_i(s)$ gives
\[
  \E|B_s^i|^2
  \le \frac{C\|K\|_\infty^2}{(i-1)^2}\cdot 10(i-1)M_{i-1}(s)
  \le \frac{C\|K\|_\infty^2}{i-1}\,M_i(s).
\]
Plugging the  above estimate into    Proposition  \ref{prop:Ri-integral-ineq}  and recalling $\kappa=\|\sigma^{-1}\|^2\|K\|_\infty^2$ yields
\[
  R_i(t)\le R_i(0)+\kappa\int_0^t\Big(\frac{C}{i-1}\,R_i(s)+\frac{C}{i-1}+\frac{C}{i-1}\,M_i(s)\Big)\ud s.
\]
We then multiply by $(i-1)$ and use $(i-1)R_i(s)\le M_i(s)$ (for $i\ge2$) to get 
\[
  (i-1)R_i(t)\le (i-1)R_i(0)+\kappa\int_0^t\big(C\,M_i(s)+C\big) \ud s.
\]
Taking the maximum over $2\le k\le i$ gives the part of \(M_i(t)\) involving the scaled increments. The remaining term \(R_1(t)\) is controlled by the first-particle estimate in Lemma~\ref{lem:Ri-girsanov}: \(R_1(t)\le R_1(0)+\kappa t/2\le M_i(0)+\kappa\int_0^t C\,\mathrm ds\), and is absorbed into the same right-hand side after enlarging the constant. Recalling the definition of $M_i$ then implies \eqref{eq:Mi-closed}. The result follows by
Gronwall's inequality.
\end{proof}

With the previous preparations in place, we can now prove our first main result, Theorem~\ref{thm:incremental}, directly.

\begin{proof}[Proof of Theorem~\ref{thm:incremental}]
Fix \(i\ge2\), and let \(M_i(t)\) be the envelope defined in Lemma~\ref{lem:sqrtR-sum-envelope-i}. Then \(M_i(0)=M_{0,i}\), and the desired estimate follows from \eqref{eq:Mi-bound}.
\end{proof}

We next derive the   propagation  of chaos for tail particles  directly from the incremental decomposition.

\begin{proof}[Proof of Corollary~\ref{cor:tail-poc}]
 By data processing, projecting \(P^{1:N}_{[0,T]}\)  to the last   \(m\) coordinates gives
\[
\Ent\!\left(
\Law(Y^{N,m})
\,\middle|\,
\bar P_{[0,T]}^{\otimes m}
\right)
\le
\Ent\!\left(
P^{1:N}_{[0,T]}
\,\middle|\,
P^{1:N-m}_{[0,T]}\otimes \bar P_{[0,T]}^{\otimes m}
\right).
\] 
  The  chain rule along the particle index   gives
\[
\Ent\!\left(
P^{1:N}_{[0,T]}
\,\middle|\,
P^{1:N-m}_{[0,T]}\otimes \bar P_{[0,T]}^{\otimes m}
\right)
=
\sum_{k=N-m+1}^{N}R_k(T).
\] 
  
If \(m\le N/2\), then every \(k\in\{N-m+1,\ldots,N\}\) satisfies
\[
k-1\ge N-m\ge N/2.
\]

Using Theorem~\ref{thm:incremental} and the  i.i.d.\   initial condition, we obtain

\[
\sum_{k=N-m+1}^{N} R_k(T)
\le C_T\sum_{k=N-m+1}^{N}\frac1{k-1}
\le C_T\frac{m}{N}.
\] 
  
 \end{proof}

\subsection{Global path-space entropy via Hardy's inequality}\label{subsec:global-entropy}

In this subsection we present the detailed proof of Corollary~\ref{cor:global-entropy}, namely the global path-space entropy estimate.
We emphasize that it does \emph{not} follow from a naive summation of the pointwise bound $R_i(T)\lesssim (i-1)^{-1}$.
Instead, we sum the integral inequality \eqref{eq:Ri-integral-ineq} and control the coupled term
\[
\sum_{i=2}^N (i-1)^{-2}\Big(\sum_{j<i}\sqrt{R_j}\Big)^2
\]
via a discrete Hardy inequality recalled below; see \cite[Theorem~326]{hardy1952inequalities} for a proof.

  \begin{lemma}[Discrete Hardy inequality for the case $p=2$]
	 For any nonnegative sequence $(a_k)_{k\ge1}$ and any $n\in\mathbb N$, one has that 
	\begin{equation}
		\label{lem:hardy}
		\sum_{k=1}^n \frac{1}{k^2}\Big(\sum_{j=1}^k a_j\Big)^2
		\le 4\sum_{k=1}^n a_k^2.
	\end{equation}
\end{lemma}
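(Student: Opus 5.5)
The statement to prove is the discrete Hardy inequality with constant \(4\): for nonnegative \(a_k\), \(\sum_{k\le n} k^{-2}(A_k)^2 \le 4\sum_{k\le n} a_k^2\), where \(A_k:=\sum_{j\le k}a_j\) and \(A_0:=0\). I would give the classical self-contained argument in the style of Elliott's proof. It compares the weighted sum of squared averages with a cross term, then closes with Cauchy--Schwarz. Write \(\alpha_k:=A_k/k\) for the running averages, so that \(a_k=k\alpha_k-(k-1)\alpha_{k-1}\), with the convention \(\alpha_0:=0\).

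\emph{Step 1: pointwise identity.} Compute
\[
\alpha_k^2-2\alpha_k a_k=\alpha_k^2-2k\alpha_k^2+2(k-1)\alpha_k\alpha_{k-1}=(1-2k)\alpha_k^2+2(k-1)\alpha_k\alpha_{k-1}.
\]
Bound the cross term by \(2\alpha_k\alpha_{k-1}\le\alpha_k^2+\alpha_{k-1}^2\); this is legitimate because everything is nonnegative. The result is
\[
\alpha_k^2-2\alpha_ka_k\le -k\alpha_k^2+(k-1)\alpha_{k-1}^2 .
\]

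\emph{Step 2: telescoping.} Summing over \(k=1,\dots,n\), the right-hand side telescopes to \(-n\alpha_n^2\le0\). Hence
\[
\sum_{k=1}^n\alpha_k^2\le 2\sum_{k=1}^n\alpha_k a_k .
\]

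\emph{Step 3: closure.} Apply Cauchy--Schwarz to the right-hand side:
\[
\sum_{k=1}^n\alpha_k^2\le 2\Big(\sum_{k=1}^n\alpha_k^2\Big)^{1/2}\Big(\sum_{k=1}^n a_k^2\Big)^{1/2}.
\]
If \(\sum_{k\le n}\alpha_k^2=0\) there is nothing to prove. Otherwise divide by its square root and square, which gives \(\sum_{k\le n}\alpha_k^2\le4\sum_{k\le n}a_k^2\), exactly the claim.

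The only delicate point is the algebra in Step 1: one has to check that the coefficient of \(\alpha_k^2\) is exactly \(-k\) after the AM--GM step, so that the sum telescopes with the correct sign. Beyond that the argument is routine, and since all sums are finite there are no convergence issues. Alternatively, one may simply cite \cite[Theorem~326]{hardy1952inequalities}, as the paper does.
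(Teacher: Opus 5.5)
Your proof is correct: it is Elliott's classical argument for Hardy's inequality specialized to $p=2$, which is the standard proof of \cite[Theorem~326]{hardy1952inequalities}, the result the paper cites in place of a proof. One small inaccuracy: the bound $2\alpha_k\alpha_{k-1}\le\alpha_k^2+\alpha_{k-1}^2$ holds for all real numbers, so Step~1 actually relies on $k-1\ge0$, not on nonnegativity of the $a_j$.
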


We now proceed to prove Corollary~\ref{cor:global-entropy}. 

\begin{proof}[Proof of Corollary~\ref{cor:global-entropy}]
We recall that $S_N(t):=\Ent(P^{1:N}_{[0,t]}\mid \bar P_{[0,t]}^{\otimes N})$ and also  by the chain rule (see (i) in  Corollary \ref{cor:incremental} ),  
\[
S_N(t)=\sum_{i=1}^N R_i(t), 
\]
for all $ t\in[0,T]$.

We first sum \eqref{eq:Ri-integral-ineq} over $i=2,\dots,N$:
for all $t\in[0,T]$,
\begin{align*}
  \sum_{i=2}^N R_i(t)
  &\le \sum_{i=2}^N R_i(0)
  +\kappa\int_0^t\Bigg(
      C\sum_{i=2}^N\frac{1}{i-1}R_i(s)
      +C\sum_{i=2}^N\frac{1}{i-1}
\\
  &\hspace{6em}
      +C\sum_{i=2}^N\frac{1}{(i-1)^2}\Big(\sum_{j<i}\sqrt{R_j(s)}\Big)^2
  \Bigg)\ud s,
\end{align*}
for a   constant \(C_d>0\) depending only on \(d\) .

We now incorporate the $i=1$ term explicitly:
by \eqref{eq:R1-explicit},
\[
  R_1(t)\le R_1(0)+\frac12\,\kappa\,t \le R_1(0)+\frac12\,\kappa\,T,
  \qquad t\in[0,T].
\]
Adding this to the previous inequality gives, for all $t\in[0,T]$,
\begin{equation}\label{eq:SN-ineq}
  S_N(t)
  \le S_N(0)+\frac12\,\kappa\,T
  +\kappa\int_0^t\Bigg(
      C\sum_{i=2}^N\frac{1}{i-1}R_i(s)
      +C\sum_{i=2}^N\frac{1}{i-1}
      +C\sum_{i=2}^N\frac{1}{(i-1)^2}\Big(\sum_{j<i}\sqrt{R_j(s)}\Big)^2
  \Bigg) \ud s.
\end{equation}

We bound the three sums in \eqref{eq:SN-ineq} as follows.
First,
\[
  \sum_{i=2}^N\frac{1}{i-1}R_i(s)\le \sum_{i=2}^N R_i(s)\le S_N(s).
\]
Second,
\[
  \sum_{i=2}^N\frac{1}{i-1}\le 1+\log N.
\]
Third, we apply Hardy's inequality \eqref{lem:hardy} with $a_j=\sqrt{R_j(s)}$ and $k=i-1$:
\[
  \sum_{i=2}^N\frac{1}{(i-1)^2}\Big(\sum_{j<i}\sqrt{R_j(s)}\Big)^2
  =\sum_{k=1}^{N-1}\frac{1}{k^2}\Big(\sum_{j=1}^k \sqrt{R_j(s)}\Big)^2
  \le 4\sum_{j=1}^{N-1} R_j(s)
  \le 4S_N(s).
\]
Inserting these bounds into \eqref{eq:SN-ineq} yields
\[
  S_N(t)\le S_N(0)+\frac12\,\kappa\,T
  +\kappa\int_0^t\big(C\,S_N(s)+C(1+\log N)\big)\ud s.
\]
Gronwall's inequality implies
\[
  S_N(T)\le C_T\big(S_N(0)+\log N\big),
\]
for a constant $C_T>0$ depending only on  \(d\) and  $(T,\kappa)$.
\end{proof}

  \subsection{Discussion: general sequential weights and effective sample size}
\label{subsec:general-weights}

The canonical system  \eqref{eq:seqSDE}   uses the uniform predecessor empirical measure, corresponding to the step size \(\alpha_i=1/i\). At the level of the entropy recursion, the same mechanism suggests the following effective-sample-size picture for more general sequential weights. We state this scale here to connect the present incremental entropy method with the weighting schemes of Du--Jiang--Li~\cite{du2023sequential}. Let \((\alpha_i)_{i\ge1}\subset(0,1]\) be deterministic step sizes with \(\alpha_1=1\), and define recursively
\[
\mu_t^i=(1-\alpha_i)\mu_t^{i-1}+\alpha_i\delta_{X_t^i},
\qquad i\ge2,
\]
with \(\mu_t^1=\delta_{X_t^1}\). Equivalently,
\[
\mu_t^i=\sum_{k=1}^i w_{i,k}\delta_{X_t^k},
\qquad
w_{i,i}=\alpha_i,\qquad
w_{i,k}=\alpha_k\prod_{\ell=k+1}^i(1-\alpha_\ell)\quad(k<i),
\]
with \(\sum_{k=1}^iw_{i,k}=1\). The relevant variance scale is the squared weight mass
\[
\theta_i:=\sum_{k=1}^iw_{i,k}^2,
\qquad
N_{\rm eff}(i):=\theta_i^{-1}.
\]
For the canonical choice \(\alpha_i=1/i\), one has \(w_{i,k}=1/i\), hence \(\theta_i=1/i\) and \(N_{\rm eff}(i)=i\).

The weighted analogue of  the   martingale-difference estimate in Lemma~\ref{lem:md-exp} suggests that the empirical--conditional fluctuation term has variance scale \(\theta_{i-1}\). Thus one expects the entropy scale
\[
R_i(T)\lesssim \theta_{i-1}=N_{\rm eff}(i-1)^{-1}.
\]
together with a compatible initial entropy assumption, for instance   i.i.d.\ initial data with law \(\bar\rho_0\).
Carrying this out would mainly add predictable-bias bookkeeping for the \(B\)-term. Since this extension is not used below and would obscure the canonical uniform case, we leave the general-weight statement out of the present paper.

  For power-law step sizes the behavior of \(\theta_i\) depends on the precise regime. If
\(\alpha_i\asymp i^{-r}\) with \(0<r<1\), then \(\theta_i\asymp i^{-r}\), and hence
\(N_{\rm eff}(i)\asymp i^r\). If \(\alpha_i=c/i\), then the critical behavior depends on \(c\):
\[
\theta_i\asymp
\begin{cases}
i^{-1}, & c>1/2,\\
(\log i)/i, & c=1/2,\\
i^{-2c}, & 0<c<1/2.
\end{cases}
\]
Thus the corresponding entropy scale is \(R_i(T)\lesssim\theta_{i-1}\). If \(r>1\), then \(\sum_i\alpha_i<\infty\), the early weights do not vanish, and \(\theta_i\) need not converge to zero; convergence to the deterministic mean-field limit cannot be expected in general without additional contractive structure.
\paragraph{Possible extension to systems with singular interactions.}
We briefly comment on singular kernels. For mildly singular interactions, one can expect the present incremental strategy to combine with the  marginal-entropy/Fisher-information   approach developed for classical systems with singular forces; see, for example, \cite{jabin2018quantitative,serfaty2020mean}. In such a framework, one would work with time-marginal entropies rather than path-space laws and use integration by parts to compensate the singularity, while keeping track of the directed triangular structure of \eqref{eq:seqSDE}. This should also be compatible with  uniform-in-time   variants on compact state spaces, in the spirit of  \cite{guillin2024uniform}.

  We do not pursue  this generality here.   For strongly singular kernels, such as Coulomb-type interactions, it remains less clear how  modulated  energy or modulated  free energy   methods \cite{serfaty2020mean,bresch2020mean} should be adapted to the sequential architecture. This  appears to be an interesting direction for future work.

\section{Convergence of empirical measures in negative Sobolev norms}\label{sec:consequences}

In this section we prove Theorem~\ref{thm:empirical}. Throughout the section the initial variables \((X_0^i)_{i\ge1}\) are i.i.d.\ with common law \(\bar\rho_0\). We write the sequential dynamics as the sum of a common McKean--Vlasov velocity and a triangular interaction error. Namely, set
\[
V_t(x):=b(t,x)+\Kop{\bar\rho_t}{x}.
\]

We also use the conventions
\[
\mu_t^0:=0,\qquad
\mu_t^i:=\frac1i\sum_{j=1}^i\delta_{X_t^j}\quad(i\ge1),
\]
and define, for \(i\ge1\),
\[
\Delta_t^i:=\Kop{\mu_t^{i-1}}{X_t^i}-\Kop{\bar\rho_t}{X_t^i}.
\]
Thus \(\Delta_t^1=-\Kop{\bar\rho_t}{X_t^1}\), and the particle dynamics can be written uniformly as
\begin{equation}\label{eq:Xi-v-plus-Delta}
  \mathrm dX_t^i
  =
  \big(V_t(X_t^i)+\Delta_t^i\big)\,\mathrm dt
  +\sigma\,\mathrm dB_t^i,
  \qquad i=1,\dots,N.
\end{equation}

Fix a Sobolev index
\begin{equation}\label{eq:beta-choice}
  \beta>\frac d2+2.
\end{equation}

With the convention introduced in Section~\ref{sec:prelim}, the Bessel kernel \(G_\beta=(1-\Delta)^{-\beta}\delta_0\) belongs to \(C_b^2(\mathbb R^d)\), with bounded first and second derivatives.

We consider the rescaled empirical error and its Bessel potential
\[
  \eta_t^N:=\sqrt{N}(\mu_t^N-\bar\rho_t),
  \qquad
  \mu_t^N:=\frac1N\sum_{i=1}^N\delta_{X_t^i},
  \qquad
  \Phi_t^N:=G_\beta*\eta_t^N.
\]

The proof has two steps. First, we derive an energy identity for \(\|\eta_t^N\|_{H^{-\beta}}^2\) and turn it into an a priori estimate with a coercive \(H^{-\beta+1}\)-dissipation term. Second, we estimate the three quantities left on the right-hand side of this energy inequality: the initial empirical fluctuation, the accumulated triangular interaction error, and the martingale contribution.

\subsection{Kernel energy identity and a priori estimate}\label{subsec:energy}

We consider the kernel energy
\begin{equation}\label{eq:energy-def}
  \|\eta_t^N\|_{H^{-\beta}}^2
  =\iint_{\R^d\times\R^d} G_\beta(x-y)\,\eta_t^N(\mathrm dx)\,\eta_t^N(\mathrm dy)
  =\big\langle \eta_t^N,\Phi_t^N\big\rangle .
\end{equation}

\begin{lemma}\label{lem:energy-identity}
Let $\beta>\frac d2+2$. Then, for every $t\in[0,T]$,
  \begin{equation}\label{eq:energy-identity}
\begin{aligned}
 \|\eta_t^N\|_{H^{-\beta}}^2
  &=\|\eta_0^N\|_{H^{-\beta}}^2
    +c_{\sigma,\beta}t
    +\int_0^t 2\Big\langle \eta_r^N,\mathcal L \Phi_r^N\Big\rangle\,\mathrm dr
    +\int_0^t 2\Big\langle \eta_r^N, V_r\cdot\nabla \Phi_r^N\Big\rangle\,\mathrm dr \\
  &\quad
    +\int_0^t \frac{2}{\sqrt{N}}\sum_{i=1}^N \nabla\Phi_r^N(X_r^i)\cdot \Delta_r^i\,\mathrm dr
    +\mathfrak M_t^N,
\end{aligned}
\end{equation} 
where
\[
\mathcal L\varphi:=\frac12\text{Tr}(\sigma\sigma^\top\nabla^2\varphi),
\qquad
c_{\sigma,\beta}:=-\operatorname{Tr}\!\big(\sigma\sigma^\top\nabla^2G_\beta(0)\big)\ge0,
\]
Moreover,
\begin{equation}\label{eq:energy-martingale}
  \mathfrak M_t^N
  :=\frac{2}{\sqrt{N}}\sum_{i=1}^N\int_0^t \nabla\Phi_r^N(X_r^i)\cdot \sigma\,\mathrm d B_r^i
\end{equation} 
is a real-valued continuous martingale.
\end{lemma}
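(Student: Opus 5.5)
The plan is to expand the quadratic form $\langle\eta_t^N,G_\beta*\eta_t^N\rangle$ as a finite sum of particle--particle, particle--limit and limit--limit terms, and apply It\^o's formula to each. Writing $\eta_t^N=\sqrt N\mu_t^N-\sqrt N\bar\rho_t$, we get
\[
\|\eta_t^N\|_{H^{-\beta}}^2=\frac1N\sum_{i,k}G_\beta(X_t^i-X_t^k)-2\sum_i (G_\beta*\bar\rho_t)(X_t^i)+N\iint G_\beta(x-y)\bar\rho_t(\mathrm dx)\bar\rho_t(\mathrm dy).
\]
Since $\beta>d/2+2$, $G_\beta\in C_b^2$, so It\^o's formula applies to the pair terms. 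The drift of each particle is $V_t+\Delta_t^i$ by \eqref{eq:Xi-v-plus-Delta}, and the driving Brownian motions are independent. For the terms involving $\bar\rho_t$, we use the weak form of \eqref{eq:NFP}: $\partial_t\bar\rho_t=\mathcal L^*\bar\rho_t-\nabla\cdot(\bar\rho_tV_t)$, tested against $C^2_b$ functions. For $G_\beta*\bar\rho_t$, which depends on time, we justify the time derivative through the mild/weak formulation. One way is to mollify $G_\beta$ or approximate by smooth test functions and pass to the limit using boundedness of $G_\beta$ and its first two derivatives.

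The computation then proceeds term by term.

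\textbf{Off-diagonal pairs $i\neq k$.} These give drift terms
\[
\frac1N\big[\nabla G_\beta(X^i-X^k)\cdot(V(X^i)+\Delta^i)-\nabla G_\beta(X^i-X^k)\cdot(V(X^k)+\Delta^k)\big]
\]
plus second-order terms $\frac1N\cdot 2\mathcal L G_\beta(X^i-X^k)$. No cross-variation arises, since $B^i$ and $B^k$ are independent.

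\textbf{Diagonal terms $i=k$.} These are constant, $G_\beta(0)$. The correct bookkeeping is to treat the sum over all $(i,k)$ with the second-order term $\frac1N\operatorname{Tr}(\sigma\sigma^\top\nabla^2G_\beta)(X^i-X^k)$ summed over $i\neq k$. Adding and subtracting the diagonal contribution $\frac1N\sum_i\operatorname{Tr}(\sigma\sigma^\top\nabla^2G_\beta(0))=-c_{\sigma,\beta}$ produces exactly the constant $c_{\sigma,\beta}t$. Nonnegativity of $c_{\sigma,\beta}$ follows because $-\nabla^2G_\beta(0)=\int\xi\xi^\top(1+|\xi|^2)^{-\beta}\,\mathrm d\xi$ (up to normalization) is positive semidefinite.

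\textbf{Collecting terms.} After the add/subtract, the full second-order contributions combine with those from the $\bar\rho$ terms (using symmetry of $G_\beta$ and evenness of $\nabla^2G_\beta$) into $2\langle\eta_r^N,\mathcal L\Phi_r^N\rangle$. The $V$-transport terms combine into $2\langle\eta_r^N,V_r\cdot\nabla\Phi_r^N\rangle$, using oddness of $\nabla G_\beta$ so that $\sum_k\nabla G_\beta(X^i-X^k)$ over both orders gives $\nabla\Phi$. The $\Delta$ terms appear only from particles, giving $\frac{2}{\sqrt N}\sum_i\nabla\Phi_r^N(X_r^i)\cdot\Delta_r^i$. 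The stochastic parts give $\mathfrak M_t^N$. It is a true martingale because $\nabla\Phi^N$ is bounded: for fixed $N$ we have $|\nabla\Phi_r^N|\le 2\sqrt N\|\nabla G_\beta\|_\infty$.

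\textbf{Main obstacle.} I expect the main difficulty to be the careful algebra verifying that the second-order and transport terms recombine into exactly $\langle\eta,\mathcal L\Phi\rangle$ and $\langle\eta,V\cdot\nabla\Phi\rangle$. This means checking the signs of the cross terms between $\mu^N$ and $\bar\rho$, and the rigorous time-differentiation of $r\mapsto G_\beta*\bar\rho_r$ and of $\iint G_\beta\,\mathrm d\bar\rho_r\,\mathrm d\bar\rho_r$ when $\bar\rho$ is only a weak solution. I would handle the latter by expressing $\bar\rho_r$ as the law of $\bar X_r$ and applying It\^o's formula to $G_\beta(x-\bar X_r)$ and to $G_\beta(\bar X_r-\bar X'_r)$ for an independent copy, then taking expectations. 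This avoids any PDE regularity.
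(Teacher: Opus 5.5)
Your proposal is correct and follows essentially the same route as the paper. You use the same three-term expansion of $\|\eta_t^N\|_{H^{-\beta}}^2$ and apply It\^o's formula to the particle terms; your pair-by-pair bookkeeping with the diagonal add/subtract is equivalent to the paper's Hessian formula $\nabla_{x_i}^2A^N=\frac2N\nabla^2(G_\beta*\mu_x^N)(x_i)-\frac2{N^2}\nabla^2G_\beta(0)$. You then recombine the limit terms through the weak Fokker--Planck equation tested against $G_\beta(x-\cdot)$ and $\Phi_t^N$, as the paper does. Your extra justifications fill in points the paper only asserts: the Fourier argument for $c_{\sigma,\beta}\ge0$, the bound $|\nabla\Phi_r^N|\le 2\sqrt N\|\nabla G_\beta\|_\infty$ for the true martingale property, and the It\^o representation via $\bar X$ and an independent copy for the time derivatives.
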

\begin{proof}
Expanding $\eta_t^N=\sqrt{N}(\mu_t^N-\bar\rho_t)$ yields the decomposition
\[
   \frac{1}{N}\|\eta_t^N\|_{H^{-\beta}}^2=A_t^N-2B_t^N+C_t,
\]
where
\[
  A_t^N:=\iint G_\beta(x-y)\,\mu_t^N(\mathrm dx)\,\mu_t^N(\mathrm dy)
       =\frac1{N^2}\sum_{i,j=1}^N G_\beta(X_t^i-X_t^j),
\]
\[
  B_t^N:=\int (G_\beta*\bar\rho_t)(x)\,\mu_t^N(\mathrm dx)
       =\frac1N\sum_{i=1}^N \phi_t(X_t^i),
  \qquad \phi_t:=G_\beta*\bar\rho_t,
\]
\[
  C_t:=\iint G_\beta(x-y)\,\bar\rho_t(\mathrm dx)\,\bar\rho_t(\mathrm dy)
       =\langle \bar\rho_t,\phi_t\rangle.
\]
Since $G_\beta\in C_b^2(\R^d)$ and $\phi_t\in C_b^2(\R^d)$, It\^o's formula applies to $A_t^N$ and $B_t^N$.
For the time-dependent function \(\phi_t\), the derivative \(\partial_t\phi_t\) is understood by testing the weak Fokker--Planck equation against \(G_\beta(x-\cdot)\), which makes \(t\mapsto\phi_t(x)\) absolutely continuous for each \(x\).
We compute the differentials of $A_t^N$, $B_t^N$, and $C_t$ separately, and then sum them up.

\smallskip

\emph{It\^o formula for $A_t^N$.}
For $x=(x_1,\dots,x_N)$ set
\[
  A^N(x):=\frac1{N^2}\sum_{i,j=1}^N G_\beta(x_i-x_j),
  \qquad
  \mu_x^N:=\frac1N\sum_{i=1}^N\delta_{x_i}.
\]
Using the evenness of $G_\beta$ (hence $\nabla G_\beta$ is odd), a direct differentiation gives, for each $i$,
  \[
  \nabla_{x_i}A^N(x)=\frac{2}{N}\,\nabla\big(G_\beta*\mu_x^N\big)(x_i),
\] 
 and
\[
  \nabla_{x_i}^2A^N(x)
  =
  \frac{2}{N}\,\nabla^2\big(G_\beta*\mu_x^N\big)(x_i)
  -\frac{2}{N^2}\,\nabla^2G_\beta(0).
\]
Indeed, the diagonal term \(G_\beta(x_i-x_i)=G_\beta(0)\) is constant in \(x_i\), and therefore its second derivative along the \(x_i\)-variable is zero.
 Applying It\^o to $A_t^N=A^N(X_t^1,\dots,X_t^N)$ and using \eqref{eq:Xi-v-plus-Delta} yields
  \begin{equation}\label{eq:dA}
\begin{aligned}
  \mathrm d A_t^N
  &=\frac{2}{N}\sum_{i=1}^N \nabla(G_\beta*\mu_t^N)(X_t^i)\cdot \mathrm dX_t^i
    +\frac12\sum_{i=1}^N \text{Tr}\!\Big(\sigma\sigma^\top \nabla_{x_i}^2A^N(X_t)\Big)\,\mathrm dt \\
  &=2\Big\langle \mu_t^N,\, V_t\cdot\nabla(G_\beta*\mu_t^N)\Big\rangle\,\mathrm dt
    +\frac{2}{N}\sum_{i=1}^N \nabla(G_\beta*\mu_t^N)(X_t^i)\cdot\Delta_t^i\,\mathrm dt \\
  &\quad
    +2\Big\langle \mu_t^N,\,\mathcal L(G_\beta*\mu_t^N)\Big\rangle\,\mathrm dt
    -\frac1N\operatorname{Tr}\!\big(\sigma\sigma^\top\nabla^2G_\beta(0)\big)\,\mathrm dt \\
  &\quad
    +\frac{2}{N}\sum_{i=1}^N \nabla(G_\beta*\mu_t^N)(X_t^i)\cdot\sigma\,\mathrm dB_t^i .
\end{aligned}
\end{equation} 

\smallskip

\emph{It\^o formula for $B_t^N$.}
Recall $B_t^N=\langle \mu_t^N,\phi_t\rangle=\frac1N\sum_{i=1}^N\phi_t(X_t^i)$ with $\phi_t=G_\beta*\bar\rho_t$.
Applying It\^o to $\phi_t(X_t^i)$ and summing over $i$ gives
  \begin{equation}\label{eq:dB}
\begin{aligned}
  \mathrm d B_t^N
  &=\Big\langle \mu_t^N,\,\partial_t\phi_t+\mathcal L\phi_t+V_t\cdot\nabla\phi_t\Big\rangle\,\mathrm dt
    +\frac{1}{N}\sum_{i=1}^N \nabla\phi_t(X_t^i)\cdot\Delta_t^i\,\mathrm dt \\
  &\quad
    +\frac{1}{N}\sum_{i=1}^N \nabla\phi_t(X_t^i)\cdot\sigma\,\mathrm dB_t^i .
\end{aligned}
\end{equation} 

\smallskip

\emph{Time derivative of $C_t$.}
Since $C_t=\langle \bar\rho_t,\phi_t\rangle$ and $\phi_t=G_\beta*\bar\rho_t$, we have
\[
  \frac{\mathrm d}{\mathrm dt}C_t
  =\langle \partial_t\bar\rho_t,\phi_t\rangle+\langle \bar\rho_t,\partial_t\phi_t\rangle
  =\langle \partial_t\bar\rho_t,\phi_t\rangle+\langle \bar\rho_t,G_\beta*(\partial_t\bar\rho_t)\rangle
  =2\langle \partial_t\bar\rho_t,\phi_t\rangle.
\]
Therefore
\begin{equation}\label{eq:dC}
  \mathrm d C_t = 2\langle \partial_t\bar\rho_t,\phi_t\rangle\,\mathrm dt.
\end{equation}

\smallskip

\emph{Summation and cancellations.}
Combining \eqref{eq:dA}--\eqref{eq:dC} we obtain
  \begin{equation}\label{eq:dE-pre}
\begin{aligned}
  \mathrm d \|\eta_t^N\|_{H^{-\beta}}^2
  &=N\Big(\mathrm dA_t^N-2\,\mathrm dB_t^N+\mathrm dC_t\Big) \\
  &=2N\Big\langle \mu_t^N,\,\mathcal L(G_\beta*\mu_t^N)\Big\rangle\,\mathrm dt
    -2N\Big\langle \mu_t^N,\,\mathcal L\phi_t\Big\rangle\,\mathrm dt \\
  &\quad
    -\operatorname{Tr}\!\big(\sigma\sigma^\top\nabla^2G_\beta(0)\big)\,\mathrm dt \\
  &\quad
    +2N\Big\langle \mu_t^N,\, V_t\cdot\nabla(G_\beta*\mu_t^N)\Big\rangle\,\mathrm dt
    -2N\Big\langle \mu_t^N,\, V_t\cdot\nabla\phi_t\Big\rangle\,\mathrm dt \\
  &\quad
    +2\sum_{i=1}^N \Big(\nabla(G_\beta*\mu_t^N)-\nabla\phi_t\Big)(X_t^i)\cdot\Delta_t^i\,\mathrm dt \\
  &\quad
    -2N\Big\langle \mu_t^N,\,\partial_t\phi_t\Big\rangle\,\mathrm dt
    +2N\Big\langle \partial_t\bar\rho_t,\,\phi_t\Big\rangle\,\mathrm dt \\
  &\quad
    +2\sum_{i=1}^N \Big(\nabla(G_\beta*\mu_t^N)-\nabla\phi_t\Big)(X_t^i)\cdot\sigma\,\mathrm dB_t^i .
\end{aligned}
\end{equation} 
Since $\frac{1}{\sqrt{N}}\Phi_t^N=G_\beta*(\mu_t^N-\bar\rho_t)=G_\beta*\mu_t^N-\phi_t$, we can rewrite the third and the martingale line as
  \[
  \frac{2}{\sqrt{N}}\sum_{i=1}^N \nabla\Phi_t^N(X_t^i)\cdot\Delta_t^i\,\mathrm dt
  \qquad\text{and}\qquad
  \mathrm d\mathfrak M_t^N
  :=\frac{2}{\sqrt{N}}\sum_{i=1}^N \nabla\Phi_t^N(X_t^i)\cdot\sigma\,\mathrm dB_t^i,
\] 
so that   $\mathfrak M_t^N$  is exactly \eqref{eq:energy-martingale}.

It remains to simplify the line involving $\partial_t\phi_t$.
Using $\partial_t\phi_t=G_\beta*(\partial_t\bar\rho_t)$ and the symmetry of $G_\beta$,
\[
  \Big\langle \mu_t^N,\,\partial_t\phi_t\Big\rangle
  =\Big\langle \mu_t^N,\,G_\beta*(\partial_t\bar\rho_t)\Big\rangle
  =\Big\langle \partial_t\bar\rho_t,\,G_\beta*\mu_t^N\Big\rangle.
\]
Hence
\[
  -2\langle \mu_t^N,\partial_t\phi_t\rangle + 2\langle \partial_t\bar\rho_t,\phi_t\rangle
  =-2\big\langle \partial_t\bar\rho_t,\,G_\beta*\mu_t^N-\phi_t\big\rangle
  =-\frac{2}{\sqrt{N}}\langle \partial_t\bar\rho_t,\Phi_t^N\rangle.
\]

Since \(\bar\rho\)  solves the limit Fokker--Planck equation with drift
  \[
V_t=b(t,\cdot)+\Kop{\bar\rho_t},
\]
 for every fixed   \(t\)  and every smooth test function   \(\psi\),

\[
  \langle \partial_t\bar\rho_t,\psi\rangle
  =\langle \bar\rho_t,\mathcal L\psi\rangle+\langle \bar\rho_t,V_t\cdot\nabla\psi\rangle.
\] 
We apply this identity at time $t$ with $\psi=\Phi_t^N$ (evaluation at a fixed time, so no term involving $\partial_t\Phi_t^N$ appears) and obtain
  \[
  -2\langle \partial_t\bar\rho_t,\Phi_t^N\rangle
  =-2\langle \bar\rho_t,\mathcal L\Phi_t^N\rangle
   -2\langle \bar\rho_t,V_t\cdot\nabla\Phi_t^N\rangle.
\] 
Substituting this into \eqref{eq:dE-pre}, and using
  \[
  \Big\langle \mu_t^N,\,\mathcal L(G_\beta*\mu_t^N)- \mathcal L\phi_t\Big\rangle
  =\frac{1}{\sqrt{N}}\langle \mu_t^N,\mathcal L\Phi_t^N\rangle,
\quad
  \Big\langle \mu_t^N,\,V_t\cdot\nabla(G_\beta*\mu_t^N)-\,V_t\cdot\nabla\phi_t\Big\rangle
  =\frac{1}{\sqrt{N}}\langle \mu_t^N,V_t\cdot\nabla\Phi_t^N\rangle,
\] 
we arrive at
  
\begin{align*}
  \mathrm d\|\eta_t^N\|_{H^{-\beta}}^2
  &=
  c_{\sigma,\beta}\,\mathrm dt
  +2\sqrt{N}\langle \mu_t^N-\bar\rho_t,\mathcal L\Phi_t^N\rangle\,\mathrm dt
\\
  &\quad
  +2\sqrt{N}\langle \mu_t^N-\bar\rho_t,V_t\cdot\nabla\Phi_t^N\rangle\,\mathrm dt
  +\frac{2}{\sqrt{N}}\sum_{i=1}^N \nabla\Phi_t^N(X_t^i)\cdot\Delta_t^i\,\mathrm dt
  +\mathrm d\mathfrak M_t^N.
\end{align*} 
 
 Since $\eta_t^N=\sqrt{N}(\mu_t^N-\bar\rho_t)$, this is exactly the differential form of  the identity  \eqref{eq:energy-identity}, and integrating over $[0,t]$
yields \eqref{eq:energy-identity}.
\end{proof}

\begin{lemma}\label{lem:energy-ineq}
Let $\beta>\frac d2+2$ and let \(m\in\mathbb N\) satisfy \(m>\beta\).
Assume that
  \[
  V_T:=\sup_{t\in[0,T]}\|V_t\|_{W^{m,\infty}}<\infty.
\] 
Then there exist constants $c_0,C_0>0$ (depending only on $\beta$, $T$, $\sigma$ and $V_T$) such that for all $t\in[0,T]$,
  \begin{equation}\label{eq:energy-ineq}
\begin{aligned}
\|\eta_t^N\|_{H^{-\beta}}^2
  +c_0\int_0^t \|\eta_r^N\|_{H^{-\beta+1}}^2\,\mathrm dr
  &\le
  \|\eta_0^N\|_{H^{-\beta}}^2
  +C_0 t
  +C_0\int_0^t \|\eta_r^N\|_{H^{-\beta}}^2\,\mathrm dr \\
  &\quad
  +C_0\int_0^t \Big(\frac{1}{\sqrt{N}}\sum_{i=1}^N|\Delta_r^i|\Big)^2\,\mathrm dr
  +\mathfrak M_t^N .
\end{aligned}
\end{equation} 
\end{lemma}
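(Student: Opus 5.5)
The plan is to start from the energy identity of Lemma~\ref{lem:energy-identity} and bound its three drift terms one at a time; the martingale $\mathfrak M_t^N$ and $c_{\sigma,\beta}t\le C_0t$ are kept unchanged.

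\emph{Diffusion term (coercivity).} In Fourier variables, with $\Phi_r^N=G_\beta*\eta_r^N$,
\[
2\langle\eta_r^N,\mathcal L\Phi_r^N\rangle=-\int \xi^\top\sigma\sigma^\top\xi\,(1+|\xi|^2)^{-\beta}|\widehat{\eta_r^N}(\xi)|^2\,\mathrm d\xi .
\]
Invertibility of $\sigma$ gives $\xi^\top\sigma\sigma^\top\xi\ge\lambda|\xi|^2$ with $\lambda=\|\sigma^{-1}\|^{-2}$. Writing $|\xi|^2=(1+|\xi|^2)-1$, we get
\[
2\langle\eta_r^N,\mathcal L\Phi_r^N\rangle\le-\lambda\|\eta_r^N\|_{H^{-\beta+1}}^2+\lambda\|\eta_r^N\|_{H^{-\beta}}^2 .
\]
This produces the dissipation, with $c_0=\lambda/2$ after the absorption step below.

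\emph{Transport term.} By $H^{-(\beta-1)}$--$H^{\beta-1}$ duality,
\[
|\langle\eta_r^N,V_r\cdot\nabla\Phi_r^N\rangle|\le\|\eta_r^N\|_{H^{-\beta+1}}\,\|V_r\cdot\nabla\Phi_r^N\|_{H^{\beta-1}} .
\]
Lemma~\ref{lem:multiplier} with index $\beta-1$ (allowed since $m>\beta>\beta-1$) gives
\[
\|V_r\cdot\nabla\Phi_r^N\|_{H^{\beta-1}}\le C V_T\|\Phi_r^N\|_{H^\beta}=CV_T\|\eta_r^N\|_{H^{-\beta}},
\]
using that $G_\beta*$ is an isometry from $H^{-\beta}$ onto $H^{\beta}$. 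Young's inequality then bounds the term by $\frac\lambda4\|\eta_r^N\|_{H^{-\beta+1}}^2+C\|\eta_r^N\|_{H^{-\beta}}^2$.

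\emph{Interaction-error term.} Since $\beta>d/2+2$, the embedding \eqref{eq:Sobolev-embed-prelim} applies to $H^{\beta-1}$. It gives $\|\nabla\Phi_r^N\|_\infty\le C\|\Phi_r^N\|_{H^{\beta-1}}=C\|\eta_r^N\|_{H^{-\beta-1}}\le C\|\eta_r^N\|_{H^{-\beta}}$. Hence
\[
\Big|\frac{2}{\sqrt N}\sum_i\nabla\Phi_r^N(X_r^i)\cdot\Delta_r^i\Big|\le 2C\|\eta_r^N\|_{H^{-\beta}}\,\frac1{\sqrt N}\sum_i|\Delta_r^i|\le C\|\eta_r^N\|_{H^{-\beta}}^2+C\Big(\frac1{\sqrt N}\sum_i|\Delta_r^i|\Big)^2 .
\]

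Collecting the three bounds and absorbing the $\frac\lambda4$ dissipation piece into the left gives \eqref{eq:energy-ineq} with $c_0=\lambda/2$ (if $c_0$ is allowed to be $\lambda/4$, one can simply keep it). The only delicate point is the transport estimate. It needs the multiplier lemma at index $\beta-1$ and exactly one derivative of gain from the dissipation, which is why the regularity of $V$ is required. Everything else is routine Fourier bookkeeping.
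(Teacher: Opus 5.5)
Your proof is correct and follows the paper's argument. You use the same Fourier coercivity bound for the diffusion term, the multiplier lemma with duality and Young for the transport term, and a Sobolev embedding with Young for the interaction error. The only differences are in the Sobolev indices, and both work:
\begin{itemize}
\item For the transport term you pair in $H^{-\beta+1}\times H^{\beta-1}$, with the multiplier lemma at index $\beta-1$, instead of the paper's $H^{-\beta}\times H^{\beta}$. This gives the same product $\|\eta_r^N\|_{H^{-\beta}}\|\eta_r^N\|_{H^{-\beta+1}}$.
\item For the interaction term you use $H^{\beta-1}\hookrightarrow W^{1,\infty}$, which is exactly where $\beta>\frac d2+2$ is needed. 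This bounds the term by $\|\eta_r^N\|_{H^{-\beta}}^2$ without borrowing dissipation. The paper instead uses $H^{\beta+1}\hookrightarrow W^{1,\infty}$ and absorbs a second $\frac{\lambda}{4}\|\eta_r^N\|_{H^{-\beta+1}}^2$.
\end{itemize}
In your version $\frac{3\lambda}{4}$ of dissipation remains, so $c_0=\lambda/2$ is valid. Your parenthetical about $c_0=\lambda/4$ is unnecessary.
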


\begin{proof}
Fix $t\in[0,T]$. We start from the energy identity \eqref{eq:energy-identity} and estimate the three time-integrals.

\smallskip

Let $\Sigma:=\sigma\sigma^\top$ and denote by $\lambda_\Sigma>0$ the smallest eigenvalue of $\Sigma$.
Using the Fourier characterization of $H^{-\beta}$, for any signed measure (or distribution) $\eta$ such that the expressions are finite,
\[
  2\big\langle \eta,\mathcal L(G_\beta*\eta)\big\rangle
  =-\int_{\R^d} (\xi^\top\Sigma\xi)\,(1+|\xi|^2)^{-\beta}\,|\widehat\eta(\xi)|^2\,\mathrm d\xi.
\]
Since $\xi^\top\Sigma\xi\ge \lambda_\Sigma|\xi|^2$ and
\[
  |\xi|^2(1+|\xi|^2)^{-\beta}=(1+|\xi|^2)^{-\beta+1}-(1+|\xi|^2)^{-\beta},
\]
we obtain, for all $\eta$,
\begin{equation}\label{eq:diffusion-dissipation}
  2\big\langle \eta,\mathcal L(G_\beta*\eta)\big\rangle
  \le -\lambda_\Sigma\|\eta\|_{H^{-\beta+1}}^2+\lambda_\Sigma\|\eta\|_{H^{-\beta}}^2.
\end{equation}
Applying this with $\eta=\eta_r^N$ yields
  \begin{equation}\label{eq:diffusion-bound-r}
  2\big\langle \eta_r^N,\mathcal L\Phi_r^N\big\rangle
  \le -\lambda_\Sigma\|\eta_r^N\|_{H^{-\beta+1}}^2+\lambda_\Sigma\|\eta_r^N\|_{H^{-\beta}}^2.
\end{equation} 

\smallskip

Next, write $\Phi_r^N=G_\beta*\eta_r^N$. By the multiplier estimate on $H^\beta$ (cf.\ Lemma~\ref{lem:multiplier}),
  \[
  \|V_r\cdot\nabla \Phi_r^N\|_{H^\beta}\le C_\beta \|V_r\|_{W^{m,\infty}}\|\Phi_r^N\|_{H^{\beta+1}}
  \le C_\beta V_T \|\eta_r^N\|_{H^{-\beta+1}},
\] 
where we used $\|\Phi_r^N\|_{H^{\beta+1}}=\|\eta_r^N\|_{H^{-\beta+1}}$.
By duality between $H^{-\beta}$ and $H^\beta$,
  \[
  \big|\langle \eta_r^N, V_r\cdot\nabla\Phi_r^N\rangle\big|
  \le \|\eta_r^N\|_{H^{-\beta}}\|V_r\cdot\nabla\Phi_r^N\|_{H^\beta}
  \le C_\beta V_T \|\eta_r^N\|_{H^{-\beta}}\|\eta_r^N\|_{H^{-\beta+1}}.
\] 
We apply Young's inequality with the fixed choice $\varepsilon:=\lambda_\Sigma/4$ and obtain
  \begin{equation}\label{eq:transport-bound-r}
  2\big|\langle \eta_r^N, V_r\cdot\nabla\Phi_r^N\rangle\big|
  \le \frac{\lambda_\Sigma}{4}\|\eta_r^N\|_{H^{-\beta+1}}^2
       +C_{\beta}\,\frac{V_T^2}{\lambda_\Sigma}\,\|\eta_r^N\|_{H^{-\beta}}^2 .
\end{equation} 

\smallskip

Finally, since $\beta>\frac d2+2$, Sobolev embedding gives $H^{\beta+1}\hookrightarrow W^{1,\infty}$ and hence
\[
  \|\nabla\Phi_r^N\|_\infty\le C_\beta\|\Phi_r^N\|_{H^{\beta+1}}=C_\beta\|\eta_r^N\|_{H^{-\beta+1}}.
\]
Therefore,
\[
  \Big|\frac{2}{\sqrt{N}}\sum_{i=1}^N \nabla\Phi_r^N(X_r^i)\cdot \Delta_r^i\Big|
  \le 2\|\nabla\Phi_r^N\|_\infty\Big(\frac{1}{\sqrt{N}}\sum_{i=1}^N|\Delta_r^i|\Big)
  \le 2C_\beta \|\eta_r^N\|_{H^{-\beta+1}}\Big(\frac{1}{\sqrt{N}}\sum_{i=1}^N|\Delta_r^i|\Big).
\]
Applying Young again with $\varepsilon:=\lambda_\Sigma/4$ yields
\begin{equation}\label{eq:interaction-bound-r}
  \Big|\frac{2}{\sqrt{N}}\sum_{i=1}^N \nabla\Phi_r^N(X_r^i)\cdot \Delta_r^i\Big|
  \le \frac{\lambda_\Sigma}{4}\|\eta_r^N\|_{H^{-\beta+1}}^2
+C_{\beta}\,\frac1{\lambda_\Sigma}\Big(\frac{1}{\sqrt{N}}\sum_{i=1}^N|\Delta_r^i|\Big)^2.
\end{equation}

\smallskip

We now insert \eqref{eq:diffusion-bound-r}, \eqref{eq:transport-bound-r}, and \eqref{eq:interaction-bound-r} into
\eqref{eq:energy-identity} and integrate over $r\in[0,t]$.
 The additional term \(c_{\sigma,\beta}t\) in Lemma~\ref{lem:energy-identity} is bounded by \(C_0t\), after increasing \(C_0\) by a constant depending only on \(\beta\) and \(\sigma\).
 Collecting the coercive contributions, we obtain
\begin{align*}
  \|\eta_t^N\|_{H^{-\beta}}^2
  &+\frac{\lambda_\Sigma}{2}\int_0^t \|\eta_r^N\|_{H^{-\beta+1}}^2\,\mathrm dr
\\
  &\le
  \|\eta_0^N\|_{H^{-\beta}}^2
  +C_0t
  +C_0\int_0^t \|\eta_r^N\|_{H^{-\beta}}^2\,\mathrm dr
\\
  &\quad
  +C_0\int_0^t \Big(\frac{1}{\sqrt{N}}\sum_{i=1}^N|\Delta_r^i|\Big)^2\,\mathrm dr
  +\mathfrak M_t^N,
\end{align*} 
with a constant $C_0$ depending only on $\beta$, $\sigma$ and $V_T$.
This is exactly \eqref{eq:energy-ineq} with $c_0:=\lambda_\Sigma/2$.
\end{proof}
  Lemma~\ref{lem:energy-ineq} shows that the whole problem has been reduced to bounding the three error terms on the right-hand side: the initial empirical error, the accumulated drift mismatch, and the energy martingale. The next lemma packages this reduction in a form convenient for the final estimate.
 \begin{lemma}\label{lem:gronwall-reduction}
Let $\beta>\frac d2+2$ and let \(m\in\mathbb N\) satisfy \(m>\beta\). Assume that
  \[
  V_T:=\sup_{t\in[0,T]}\|V_t\|_{W^{m,\infty}}<\infty,
\] 
and let $c_0,C_0$ be the constants in Lemma~\ref{lem:energy-ineq}. Define
  \[
  Y_T^N:=\sup_{0\le t\le T}\|\eta_t^N\|_{H^{-\beta}}^2,\qquad
  J_T^N:=\int_0^T \Big(\frac{1}{\sqrt{N}}\sum_{i=1}^N|\Delta_r^i|\Big)^2\,\mathrm dr,\qquad
  \mathfrak M_T^{N,*}:=\sup_{0\le t\le T}\big|\mathfrak M_t^N\big|.
\] 
Then there exists $C_T>0$ (depending only on $T$ and $C_0$) such that
  \begin{equation}\label{eq:reduction}
  Y_T^N
  +\int_0^T \|\eta_r^N\|_{H^{-\beta+1}}^2\,\mathrm dr
  \le
  C_T\big(1+\|\eta_0^N\|_{H^{-\beta}}^2  +J_T^N+\mathfrak M_T^{N,*}\big).
\end{equation} 
\end{lemma}
\begin{proof}
  For \(0\le t\le T\), set
\[
Y_t^N:=\sup_{0\le r\le t}\|\eta_r^N\|_{H^{-\beta}}^2,
\qquad
D_t^N:=\int_0^t\|\eta_r^N\|_{H^{-\beta+1}}^2\,\mathrm dr,
\qquad
J_t^N:=\int_0^t\Big(\frac1{\sqrt N}\sum_{i=1}^N|\Delta_r^i|\Big)^2\,\mathrm dr.
\]
Dropping the nonnegative dissipation term in  Lemma~\ref{lem:energy-ineq} , taking the supremum over \(s\le t\), and using \(J_s^N\le J_T^N\) gives
\[
Y_t^N
\le
\|\eta_0^N\|_{H^{-\beta}}^2
+C_0T
+C_0J_T^N
+\mathfrak M_T^{N,*}
+C_0\int_0^tY_r^N\,\mathrm dr.
\]
Gronwall's inequality yields
\[
Y_T^N
\le
C_T\big(1+\|\eta_0^N\|_{H^{-\beta}}^2+J_T^N+\mathfrak M_T^{N,*}\big).
\]
Returning to Lemma~\ref{lem:energy-ineq} with \(t=T\), and using the preceding bound on
\(\int_0^T\|\eta_r^N\|_{H^{-\beta}}^2\,\mathrm dr\le T Y_T^N\), gives the same bound for \(D_T^N\). Combining the two estimates proves \eqref{eq:reduction} .
\end{proof}

\subsection{Proof of Theorem~\ref{thm:empirical}}\label{subsec:proof-empirical}
 We now estimate the three terms  appearing  in \eqref{eq:reduction}.   The guiding principle is simple: the initial term is of order one after rescaling by $\sqrt N$, the interaction error is controlled by the incremental entropy bound from Theorem~\ref{thm:incremental}, and the martingale term is absorbed into the dissipation by BDG and Young's inequality.
We begin with the time-zero contribution, which is explicit because the initial data are i.i.d.
 \begin{lemma}\label{lem:init-Hminus}
Let $\beta>\frac d2$ and $(X_0^i)_{i\ge1}$ be i.i.d.\ with common law $\rho_0$.
Then
\[
  \E\big[\|\mu_0^N-\rho_0\|_{H^{-\beta}}^2\big]
  =\frac1N\Big(G_\beta(0)-\iint G_\beta(x-y)\,\rho_0(\mathrm dx)\rho_0(\mathrm dy)\Big)
  \le \frac{G_\beta(0)}{N}.
\]
In particular, $G_\beta(0)-\iint G_\beta(x-y)\,\rho_0(\mathrm dx)\rho_0(\mathrm dy)\ge0$.
\end{lemma}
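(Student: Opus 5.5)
The plan is to expand the square via the kernel representation \eqref{eq:Hminus-kernel-prelim} and compute the expectation exactly using independence. Since $\beta>d/2$, the Bessel kernel $G_\beta$ is continuous and bounded, so
\[
\|\mu_0^N-\rho_0\|_{H^{-\beta}}^2
=\frac1{N^2}\sum_{i,j=1}^N G_\beta(X_0^i-X_0^j)
-\frac2N\sum_{i=1}^N (G_\beta*\rho_0)(X_0^i)
+\iint G_\beta(x-y)\,\rho_0(\mathrm dx)\rho_0(\mathrm dy).
\]
All terms are bounded, hence integrable, so taking expectations term by term is justified.

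I would then split the double sum into diagonal and off-diagonal parts. The $N$ diagonal terms each equal $G_\beta(0)$. For each of the $N(N-1)$ off-diagonal pairs, independence gives
\[
\E[G_\beta(X_0^i-X_0^j)]=I:=\iint G_\beta(x-y)\,\rho_0(\mathrm dx)\rho_0(\mathrm dy).
\]
The cross term has expectation $-2I$, since $\E[(G_\beta*\rho_0)(X_0^i)]=I$ (using the evenness of $G_\beta$, or simply Fubini). Summing,
\[
\E\|\mu_0^N-\rho_0\|_{H^{-\beta}}^2=\frac{G_\beta(0)}N+\frac{N-1}{N}I-2I+I=\frac1N\big(G_\beta(0)-I\big),
\]
which is the claimed identity.

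For the inequality and the nonnegativity claim, note that $I=\|\rho_0\|_{H^{-\beta}}^2=\int(1+|\xi|^2)^{-\beta}|\hat\rho_0(\xi)|^2\,\mathrm d\xi\ge0$, which gives the bound by $G_\beta(0)/N$. Nonnegativity of $G_\beta(0)-I$ follows directly from the identity, since its left-hand side is an expectation of a squared norm. Alternatively, $|\hat\rho_0|\le1$ (up to the Fourier normalization) gives $I\le\int(1+|\xi|^2)^{-\beta}=G_\beta(0)$.

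The only delicate point is the validity of the kernel representation for the signed measure $\mu_0^N-\rho_0$. Since $G_\beta$ is bounded and continuous with integrable Fourier transform, Fubini applies, so \eqref{eq:Hminus-kernel-prelim} holds for finite signed measures. Beyond this, the argument is routine bookkeeping.
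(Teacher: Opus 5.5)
Your proof is correct and follows the paper's argument essentially verbatim: expand via the kernel representation, split the double sum into diagonal and off-diagonal terms using independence, and use $|\widehat{\rho_0}|\le 1$ in Fourier variables to get $I\le G_\beta(0)$. You also note explicitly that $I=\|\rho_0\|_{H^{-\beta}}^2\ge 0$, which is needed for the bound by $G_\beta(0)/N$ and is only implicit in the paper, and you derive $G_\beta(0)-I\ge 0$ alternatively from the identity itself; both are small but welcome additions.
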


\begin{proof}
By the kernel representation,
\[
  \|\mu_0^N-\rho_0\|_{H^{-\beta}}^2
  =\iint G_\beta(x-y)\,(\mu_0^N-\rho_0)(\mathrm dx)\,(\mu_0^N-\rho_0)(\mathrm dy).
\]
Expanding and taking expectations,
\[
  \E\Big[\iint G_\beta\,\mu_0^N\otimes\mu_0^N\Big]
  =\frac1{N^2}\sum_{i,j=1}^N \E\big[G_\beta(X_0^i-X_0^j)\big]
  =\frac{1}{N}G_\beta(0)+\frac{N-1}{N}\,A,
\]
where $A:=\iint G_\beta(x-y)\,\rho_0(\mathrm dx)\rho_0(\mathrm dy)$.
Moreover,
\[
  \E\Big[\int (G_\beta*\rho_0)\,\mathrm d\mu_0^N\Big]
  =\int (G_\beta*\rho_0)(x)\,\rho_0(\mathrm dx)=A.
\]
Hence $\E[\|\mu_0^N-\rho_0\|_{H^{-\beta}}^2]=\frac1N(G_\beta(0)-A)$.

To see $G_\beta(0)-A\ge0$, write in Fourier variables:
\[
  A=\int_{\R^d} (1+|\xi|^2)^{-\beta}\,|\widehat{\rho_0}(\xi)|^2\,\mathrm d\xi,
  \qquad
  G_\beta(0)=\int_{\R^d}(1+|\xi|^2)^{-\beta}\,\mathrm d\xi.
\]
Since $\rho_0$ is a probability measure, $|\widehat{\rho_0}(\xi)|\le1$, hence $A\le G_\beta(0)$.
\end{proof}
  We next estimate the interaction mismatch term $J_T^N$. This is the place where the incremental entropy control from Section~\ref{sec:incremental-proof} enters the argument quantitatively.
 \begin{lemma}\label{lem:J-term}
Assume the i.i.d.\ initial condition and let $J_T^N$ be defined in Lemma~\ref{lem:gronwall-reduction}.
Then there exists $C_T>0$ such that for all $N\ge2$,
\[
  \E[J_T^N]\le {C_T}.
\]
\end{lemma}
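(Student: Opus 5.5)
The plan is to bound $J_T^N$ by Cauchy--Schwarz along the index and then insert the Girsanov identity for each increment. Observe first that
\[
\Big(\frac{1}{\sqrt N}\sum_{i=1}^N|\Delta_r^i|\Big)^2\le \frac1N\Big(\sum_{i=1}^N w_i\Big)\Big(\sum_{i=1}^N \frac{|\Delta_r^i|^2}{w_i}\Big)
\]
for any positive weights $w_i$. The naive choice $w_i\equiv1$ gives $\sum_i|\Delta_r^i|^2$. By Lemma~\ref{lem:Ri-girsanov} and Theorem~\ref{thm:incremental} this integrates to $\sum_i R_i(T)\lesssim\log N$, which is too weak. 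Instead I take weights adapted to the decay $R_i\lesssim (i-1)^{-1}$, namely $w_i=i^{-1/2}$ (with $w_1=1$). Then $\sum_{i\le N}w_i\le 2\sqrt N$, so
\[
\E J_T^N\le \frac{2}{\sqrt N}\sum_{i=1}^N \sqrt{i}\;\E\int_0^T|\Delta_r^i|^2\,\mathrm dr .
\]

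Next I convert each time-integrated energy into an entropy. Since $\sigma$ is invertible, $|\Delta|^2\le\|\sigma\|_{\op}^2|\sigma^{-1}\Delta|^2$. Lemma~\ref{lem:Ri-girsanov} with the i.i.d.\ initial data (so $R_i(0)=0$ for $i\ge2$) then gives $\E\int_0^T|\Delta_r^i|^2\,\mathrm dr\le 2\|\sigma\|_{\op}^2R_i(T)$. For $i=1$, $|\Delta^1_r|\le\|K\|_\infty$ trivially. Theorem~\ref{thm:incremental} with $M_0=0$ (Remark~\ref{rem:M0}; also $R_1(0)=0$) yields $R_i(T)\le C_T/(i-1)$. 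Hence
\[
\E J_T^N\le C+\frac{C_T}{\sqrt N}\sum_{i=2}^N\frac{\sqrt i}{i-1}\le C+\frac{C_T}{\sqrt N}\cdot C\sqrt N\le C_T,
\]
using $\sum_{i=2}^N \sqrt i/(i-1)\le C\sum_{k\le N}k^{-1/2}\le C\sqrt N$. The constant depends only on the quantities listed in Theorem~\ref{thm:empirical}.

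The main obstacle is precisely this weighting step. The unweighted Cauchy--Schwarz loses a $\log N$, and Jensen alone does not exploit the nonuniform decay. The weighted version balances the two factors, each of size $\sqrt N$. An equivalent route is to expand the square and bound $\E\int|\Delta^i||\Delta^j|\le(\E\int|\Delta^i|^2)^{1/2}(\E\int|\Delta^j|^2)^{1/2}\lesssim (ij)^{-1/2}$, which gives $\frac1N(\sum_i i^{-1/2})^2\lesssim 1$. This double-sum version is perhaps cleaner and avoids choosing weights, so I would present it, using Cauchy--Schwarz in $L^2(\mathrm d t\otimes\mathbb P)$ for each pair $(i,j)$.
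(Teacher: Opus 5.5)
Your proof is correct, and the double-sum form you say you would present is exactly the paper's argument. The paper bounds $\E[J_T^N]$ by $\frac1N\big(\sum_i(\E\int_0^T|\Delta_r^i|^2\,\mathrm dr)^{1/2}\big)^2$ via Minkowski in $L^2(\Omega)$ and $L^2([0,T])$, which is the same inequality as your pairwise Cauchy--Schwarz in $L^2(\mathrm dt\otimes\mathbb P)$. It then uses the Girsanov identity with $R_i(0)=0$ and Theorem~\ref{thm:incremental} to get $\E\int_0^T|\Delta_r^i|^2\,\mathrm dr\lesssim (i-1)^{-1}$ and $\sum_i (i-1)^{-1/2}\lesssim\sqrt N$, just as you do. Your weighted Cauchy--Schwarz with $w_i=i^{-1/2}$ is also valid and gives the same bound, since optimizing the weights there reproduces precisely that Minkowski estimate.
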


\begin{proof}
By definition and Fubini's theorem,
\[
  \E[J_T^N]
  =\frac1{N}\int_0^T \E\Big[\Big(\sum_{i=1}^N|\Delta_r^i|\Big)^2\Big]\,\mathrm dr.
\]
For each fixed $r$, using Minkowski's inequality in $L^2(\Omega)$ gives
\[
  \Big(\E\Big[\Big(\sum_{i=1}^N|\Delta_r^i|\Big)^2\Big]\Big)^{1/2}
  \le \sum_{i=1}^N \big(\E[|\Delta_r^i|^2]\big)^{1/2}.
\]
Integrating in time $r$ and applying Minkowski's inequality  in $L^2([0, T])$, then using Fubini's theorem again, we obtain 
\[
 \E[J_T^N]
  \le  \frac 1 N \Big(\sum_{i=1}^N \Big(\E\int_0^T|\Delta_r^i|^2\,\mathrm dr\Big)^{1/2}\Big)^2.
\]

For   \(i=1\), \(\Delta_r^1=-\Kop{\bar\rho_r}{X_r^1}\), and therefore \(|\Delta_r^1|\le\|K\|_\infty\). Hence
 \[
  \E\int_0^T|\Delta_r^1|^2\,\mathrm dr \le T\|K\|_\infty^2.
\]

 For $i\ge2$, use the incremental estimate.
Let $\Sigma:=\sigma\sigma^\top$ and write $| \sigma^{-1}z|^2:= z^\top \Sigma^{-1}z$.
Then \eqref{eq:Ri-integral} gives (with $R_i(0)=0$ under i.i.d.\ initial data)
\[
  R_i(T)=\frac12\,\E_{P^{1:i}_{[0,T]}}\int_0^T \big|\sigma^{-1}\Delta_r^i\big|^2\,\mathrm dr.
\]
By Theorem~\ref{thm:incremental}, $R_i(T)\le C_T/(i-1)$ for $i\ge2$, hence
\[
  \E\int_0^T |\Delta_r^i|^2\,\mathrm dr
  \le \|\Sigma\|_{\op}\,\E\int_0^T \big|\sigma^{-1}\Delta_r^i\big|^2\,\mathrm dr
  = 2\|\Sigma\|_{\op}\,R_i(T)
  \le \frac{C_T}{i-1},
  \qquad i\ge2.
\]
Therefore,
\[
  \sum_{i=1}^N \Big(\E\int_0^T|\Delta_r^i|^2\,\mathrm dr\Big)^{1/2}
  \le C_T\Big(1+\sum_{i=2}^N (i-1)^{-1/2}\Big)
  \le C_T\big(1+2\sqrt{N}\big),
\]
and plugging into the previous bound yields $\E[J_T^N]\le C_T$.
\end{proof}
  Finally, we control the martingale term. The key point is that the gradient of the Bessel potential is bounded by the dissipative norm appearing on the left-hand side of \eqref{eq:energy-ineq}, so the martingale contribution can be reabsorbed.
 \begin{lemma}\label{lem:K-term}
Let   \(\beta>\frac d2+2\) and let \(\mathfrak M^N\)  be defined by \eqref{eq:energy-martingale}.   For every \(\varepsilon>0\), there exists \(C_{\varepsilon,T}>0\) such that
\[
  \E\big[\mathfrak M_T^{N,*}\big]
  \le
  \varepsilon\,\E\int_0^T \|\eta_r^N\|_{H^{-\beta+1}}^2\,\mathrm dr
  +C_{\varepsilon,T},
\] 
where   \(\mathfrak M_T^{N,*}=\sup_{0\le t\le T}|\mathfrak M_t^N|\) .
\end{lemma}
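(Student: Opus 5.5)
The plan is to apply the Burkholder--Davis--Gundy inequality with exponent $p=1$ to the continuous martingale $\mathfrak M^N$. Then we bound its quadratic variation by the dissipative norm via the Sobolev embedding, and finally use Young's inequality. Concretely, by BDG there is a universal constant $C_{\mathrm{BDG}}$ with
\[
\E\big[\mathfrak M_T^{N,*}\big]\le C_{\mathrm{BDG}}\,\E\big[\langle \mathfrak M^N\rangle_T^{1/2}\big].
\]
Since the Brownian motions $B^i$ are independent, the cross-variations vanish, and
\[
\langle \mathfrak M^N\rangle_T=\frac{4}{N}\sum_{i=1}^N\int_0^T\big|\sigma^\top\nabla\Phi_r^N(X_r^i)\big|^2\,\mathrm dr
\le 4\|\sigma\|_{\op}^2\int_0^T\|\nabla\Phi_r^N\|_\infty^2\,\mathrm dr .
\]
The factor $1/N$ from the $N^{-1/2}$ normalization cancels exactly against the sum of $N$ terms, so no $N$-dependence survives. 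This cancellation is the reason the estimate is uniform in $N$.

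Next we use the embedding already exploited in the proof of Lemma~\ref{lem:energy-ineq}. Because $\beta>d/2+2$, we have $H^{\beta+1}\hookrightarrow W^{1,\infty}$ by \eqref{eq:Sobolev-embed-prelim}, and $\|\Phi_r^N\|_{H^{\beta+1}}=\|\eta_r^N\|_{H^{-\beta+1}}$ by the Fourier definition of $\Phi_r^N=G_\beta*\eta_r^N$. Together these give $\|\nabla\Phi_r^N\|_\infty\le C_\beta\|\eta_r^N\|_{H^{-\beta+1}}$, and therefore
\[
\E\big[\mathfrak M_T^{N,*}\big]\le C\,\E\Big[\Big(\int_0^T\|\eta_r^N\|_{H^{-\beta+1}}^2\,\mathrm dr\Big)^{1/2}\Big]
\]
with $C$ depending on $\beta$ and $\|\sigma\|_{\op}$. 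Young's inequality in the form $C a^{1/2}\le \varepsilon a+C^2/(4\varepsilon)$, applied inside the expectation, then yields the claim with $C_{\varepsilon,T}=C^2/(4\varepsilon)$. This constant does not even depend on $T$ beyond the constants already fixed.

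The only point needing care is justifying BDG and the $p=1$ moment, that is, checking that $\mathfrak M^N$ is a genuine martingale and that the right-hand side is finite or at least meaningful. I would first note that the integrand is bounded for each fixed $N$. Indeed, $\|\nabla\Phi_r^N\|_\infty\le \sqrt N\,(\|\nabla G_\beta\|_\infty\cdot 2)$, since $\eta_r^N$ is $\sqrt N$ times a difference of probability measures and $G_\beta\in C_b^2$. Hence $\langle\mathfrak M^N\rangle_T$ is bounded, $\mathfrak M^N$ is a square-integrable martingale, and BDG applies directly without localization. If the expectation on the right is infinite, the inequality is trivial. This is also what permits absorbing the term later into the dissipation in Lemma~\ref{lem:gronwall-reduction}, since with $\varepsilon$ small relative to the left-hand side the expectation there is finite for fixed $N$.
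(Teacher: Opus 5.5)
Your proposal is correct and follows the paper's proof essentially step for step: BDG with $p=1$, the bound $\langle\mathfrak M^N\rangle_T\le 4\|\sigma\|_{\op}^2\int_0^T\|\nabla\Phi_r^N\|_\infty^2\,\mathrm dr$ from independence of the $B^i$, the embedding $\|\nabla\Phi_r^N\|_\infty\le C_\beta\|\eta_r^N\|_{H^{-\beta+1}}$, and Young's inequality. Your extra remarks are correct details the paper leaves implicit: the integrand is bounded by $2\sqrt N\|\nabla G_\beta\|_\infty$, so $\mathfrak M^N$ is a genuine square-integrable martingale, and the resulting constant does not actually depend on $T$.
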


\begin{proof}
  Fix \(\varepsilon>0\). Set \(\Sigma:=\sigma\sigma^\top\).
Since $\sup_{t\le T}\mathfrak M_t^N\le \sup_{t\le T}|\mathfrak M_t^N|=\mathfrak M_T^{N,*}$ , it suffices to bound   $\E[\mathfrak M_T^{N,*}]$ .
By Burkholder--Davis--Gundy,
  \[
  \E[\mathfrak M_T^{N,*}]\le C\,\E\big[\langle \mathfrak M^N\rangle_T^{1/2}\big].
\] 
From \eqref{eq:energy-martingale} and independence of $(B^i)$,
  \[
  \langle \mathfrak M^N\rangle_T
  =\frac{4}{N}\sum_{i=1}^N\int_0^T \big|\sigma^\top\nabla\Phi_r^N(X_r^i)\big|^2\,\mathrm dr
  \le {4\|\Sigma\|_{\op}}\int_0^T \|\nabla\Phi_r^N\|_\infty^2\,\mathrm dr.
\] 
By Sobolev embedding (since $\beta>\frac d2+2$),
\[
  \|\nabla\Phi_r^N\|_\infty\le C_\beta\|\Phi_r^N\|_{H^{\beta+1}}
  =C_\beta\|\eta_r^N\|_{H^{-\beta+1}},
\]
hence
  \[
  \langle \mathfrak M^N\rangle_T
  \le C_T\int_0^T \|\eta_r^N\|_{H^{-\beta+1}}^2\,\mathrm dr.
\] 
Therefore,
  \[
  \E[\mathfrak M_T^{N,*}]
  \le {C_T}\,
      \E\Big[\Big(\int_0^T \|\eta_r^N\|_{H^{-\beta+1}}^2\,\mathrm dr\Big)^{1/2}\Big].
\] 
Young's inequality yields
  \[
  \E[\mathfrak M_T^{N,*}]
  \le
  \varepsilon\,\E\int_0^T \|\eta_r^N\|_{H^{-\beta+1}}^2\,\mathrm dr
  +C_{\varepsilon,T},
\] 
after adjusting constants.
\end{proof}

\begin{proof}[Proof of Theorem~\ref{thm:empirical}]
Apply Lemma~\ref{lem:gronwall-reduction} and take expectations:
  \[
  \E\Big[\sup_{0\le t\le T}\|\eta_t^N\|_{H^{-\beta}}^2 \Big]
  +\E\int_0^T \|\eta_t^N\|_{H^{-\beta+1}}^2\,\mathrm dt
  \le C_T\Big(1+\E[\|\eta_0^N\|_{H^{-\beta}}^2]+\E[J_T^N]+\E[\mathfrak M_T^{N,*}]\Big).
\] 
Since 
Lemma~\ref{lem:init-Hminus} yields $\E[\|\eta_0^N\|_{H^{-\beta}}^2]\le C_\beta$
and  Lemma~\ref{lem:J-term} gives $\E[J_T^N]\le C_T$.
  By  Lemma~\ref{lem:K-term}, for every \(\varepsilon>0\),
\[
  \E[\mathfrak M_T^{N,*}]
  \le
  \varepsilon\,\E\int_0^T \|\eta_t^N\|_{H^{-\beta+1}}^2\,\mathrm dt
  +C_{\varepsilon,T}.
\] 
  Choose \(\varepsilon>0\) sufficiently small so that  the dissipation term  can be absorbed  into the left-hand side.   Together with Lemmas~\ref{lem:init-Hminus} and \ref{lem:J-term}, this yields   \[
  \E\Big[\sup_{0\le t\le T}\|\eta_t^N\|_{H^{-\beta}}^2\Big]
  +\E\int_0^T \|\eta_t^N\|_{H^{-\beta+1}}^2\,\mathrm dt
  \le {C_{\beta,T}}.
\] 
 Since   $\eta_t^N=\sqrt N(\mu_t^N-\bar\rho_t)$, dividing the last inequality by $N$  gives
  \[
\E\Big[\sup_{0\le t\le T}\|\mu_t^N-\bar\rho_t\|_{H^{-\beta}}^2\Big]
+\E\int_0^T \|\mu_t^N-\bar\rho_t\|_{H^{-\beta+1}}^2\,\mathrm dt
\le \frac{C_{\beta,T}}{N},
\] 
  which is exactly \eqref{eq:empirical-rate}.
\end{proof}

 \section*{Acknowledgements}
The authors would like to thank Kai Du for   helpful  discussions on the sequential interacting particle system. This work was partially supported by the National Key R\&D Program of China (Project Nos.~2024YFA1015500 and 2021YFA1002800) and by the NSFC (Grant Nos.~12595282 and 12171009).
\bibliographystyle{plain}
\bibliography{references}

\end{document}